\newcommand{\N}{\mathcal{N}}
\newcommand{\MF}{\ensuremath{\mathcal{MF}}}
\newcommand{\calT}{\mathcal{T}}
\newcommand{\T}{\calT}
\newcommand{\euS}{\EuScript{S}}
\newcommand{\R}{\mathbb{R}}
\newcommand{\Z}{\mathbb{Z}}
\renewcommand{\H}{\mathbb{H}}
\newcommand{\CC}{\mathcal{C}}
\DeclareMathOperator{\Mod}{Mod}
\newcommand{\Teich}{Teich\-m\"{u}ller~}
\DeclareMathOperator{\I}{I}
\newcommand{\Sph}[2]{\mathcal{S}_{#1}(#2)}
\newcommand{\Ball}[2]{\mathcal{B}_{#1}(#2)}
\newcommand{\Ann}[3]{\mathcal{A}^{#3}_{#1}(#2)}
\newcommand{\Nbhd}{\mathop{\rm Nbhd}\nolimits}
\DeclareMathOperator{\diam}{diam}
\newcommand{\qd}{\mathcal{Q}}
\newcommand{\QD}{\qd}
\newcommand{\diskbund}{\QD^{\leq 1}}
\newcommand{\floor}[1]{\left\lfloor #1 \right\rfloor}
\newcommand{\ceil}[1]{\left\lceil #1 \right\rceil}
\newcommand{\thresh}[1]{\left[ #1 \right]}
\newcommand{\plog}{\log_+\negthinspace}
\newcommand{\abs}[1]{\lvert #1 \rvert}
\newcommand{\norm}[1]{\lVert #1 \rVert}
\newcommand{\D}{{\sf D}}
\newcommand{\M}{{\sf M}}
\newcommand{\B}{{\sf B}}
\newcommand{\ccc}{{\sf c}}
\renewcommand{\P}{{\sf P}}
\renewcommand{\L}{{\sf L}}
\newcommand{\emul}{\overset{.}{\asymp}}
\newcommand{\gmul}{\overset{.}{\succ}}
\newcommand{\lmul}{\overset{.}{\prec}}
\newcommand{\eboth}{\asymp}
\newcommand{\gboth}{\succ}
\newcommand{\lboth}{\prec}
\newcommand{\vismeas}{\nu_{x}}
\newcommand{\hausmeas}{\eta}
\newcommand{\m}{\mathbf m}
\newcommand{\n}{\mathbf n}
\newcommand{\thumeas}{\mu_{\rm \scriptscriptstyle TH}}
\newcommand{\busemeas}{\mu_{\rm \scriptscriptstyle B}}
\newcommand{\htmeas}{\mu_{\rm \scriptscriptstyle HT}}
\newcommand{\muhol}{\mu_{\rm \scriptscriptstyle hol}}
\newcommand{\sympmeas}{\mu_{\rm \scriptscriptstyle sp}}
\newcommand{\Mah}{M}
\DeclareMathOperator{\vis}{\rm Vis}
\newcommand{\thickfrac}{{\sf Thk}^\%}
\DeclareMathOperator{\len}{length}
\newcommand{\sdot}{\! \cdot \!}
\theoremstyle{plain}
\newtheorem{theorem}{Theorem}[section]
\newtheorem{corollary}[theorem]{Corollary}
\newtheorem{proposition}[theorem]{Proposition}
\newtheorem{lemma}[theorem]{Lemma}
\newtheorem{claim}[theorem]{Claim}
\newtheorem{mainthrm}{Theorem} 
\newtheorem*{theorem:triangles}{Theorem~\ref{thm:partiallyThickIntervalsAreClose}}
\newtheorem*{theorem:balls}{Theorem~\ref{thm:balls}}
\newtheorem*{theorem:spheres}{Theorem~\ref{thm:spheres}}
\newtheorem*{theorem:expthin}{Theorem~\ref{thm:expthin}}
\theoremstyle{definition}
\newtheorem{definition}[theorem]{Definition}
\theoremstyle{remark}
\newtheorem{rem}[theorem]{Remark}
\newtheorem*{rem*}{Remark}
\newtheorem*{rems*}{Remarks}
\let\c@equation\c@theorem
\numberwithin{equation}{section}
\begin{document}

\title{Statistical hyperbolicity in Teichm\"uller space}

\author[Dowdall]{Spencer Dowdall}
\address{
Spencer Dowdall\\
Mathematics\\
University of Illinois at Urbana-Champaign\\
\phantom{Urbana}
\indent Urbana, IL 61801}
\email{dowdall@math.uiuc.edu}
\author[Duchin]{Moon Duchin}
\address{
Moon Duchin\\
Mathematics\\
Tufts University\\
Medford, MA 02155}
\email{moon.duchin@tufts.edu}
\author[Masur]{Howard Masur}
\address{
Howard Masur\\
Mathematics\\
University of Chicago\\
Chicago, IL 60637}
\email{masur@math.uchicago.edu}

\date{\today}

\thanks{The first author was partially supported by the NSF postdoctoral research fellowship DMS-1204814. The second author was partially supported by NSF DMS-0906086. 
The third author was partially supported by NSF DMS-0905907.}

\begin{abstract}
In this paper we explore the idea that Teichm\"uller space is hyperbolic ``on average.'' Our approach focuses on studying the geometry of geodesics which spend a definite proportion of time in some thick part of Teichm\"uller space. We consider several different measures on Teichm\"uller space and find that this behavior for geodesics is indeed typical. With respect to each of these measures, we  show that the average distance between points in a ball of radius $r$ is asymptotic to $2r$, which is as large as possible. Our techniques also lead to a statement quantifying the expected thinness of random triangles in Teichm\"uller space, showing that ``most triangles are mostly thin.''
\end{abstract}

\maketitle

\section{Introduction}
Let $S$ be a closed surface of genus $g>1$.  In this paper we continue the study of metric properties of
Teichm\"uller space 
$\T(S)$, which is the parameter space for several types of geometric structures on $S$.
Equipped with the Teichm\"uller metric $d_\T$, it
is a complete metric space homeomorphic to $\R^{6g-6}$. It is not $\delta$--hyperbolic \cite{MW}, and several kinds of obstructions to hyperbolicity are known:  for instance, 
pairs of geodesic rays through the same point may  fellow-travel arbitrarily far apart \cite{M75}, 
and there are large ``thin parts'' of the space which, up to bounded additive error, are isometric to product spaces 
equipped with sup metrics  (and therefore rule out hyperbolicity in the space as a whole) \cite{Min-PR}.
However, these exceptions to negative curvature seem to come from rare occurrences, while a long list of properties associated with hyperbolicity do hold globally or in specialized situations. Thus one may expect such properties to hold generically or ``on average.'' 
This paper aims to show that this is indeed the case.

Our motivating goal is to understand the generic geometry of Teichm\"uller space, particularly with regard to negative-curvature phenomena. For example, geodesics that stay in the thick part of $\T(S)$ are well understood and exhibit many properties characteristic of hyperbolicity. Geodesics lying completely in the thin part are also well understood, and exhibit no negative-curvature characteristics. However, much more typical is for a geodesic to spend time in both the thick and thin parts of 
$\T(S)$---indeed, a generic geodesic ray will switch between these parts infinitely often. In this paper we develop tools to study 
these types of geodesics, and we discover that certain negative-curvature phenomena do hold in this setting. 
For example, we obtain the following variant of the thin triangle property.

\begin{mainthrm}\label{thm:partiallyThickIntervalsAreClose}
For any $\epsilon>0$ and $0<\theta \leq 1$, there exist constants $C,L$ such that 
if $I\subset[x,y]\subset \T(S)$ is a geodesic subinterval of length at least $L$ 
and at least proportion $\theta$ of $I$  is $\epsilon$--thick,
then for all $z\in\T(S)$, we have 
\[ I \cap \Nbhd_C([x,z]\cup[y,z]) \neq \emptyset. \]
\end{mainthrm}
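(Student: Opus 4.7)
The strategy is to exploit the fact that sufficiently long $\epsilon$-thick Teichm\"uller geodesic segments behave much like hyperbolic geodesics, and in particular satisfy a strong contraction property for the nearest-point projection. The first step is to unpack the density hypothesis: since at least proportion $\theta$ of $I$ is $\epsilon$-thick, an elementary pigeonhole yields an $\epsilon$-thick point $w \in I$ whose distance to either endpoint of $I$ is at least $\theta|I|/2$. This $w$ will serve as the representative thick point that we aim to place near $[x,z]\cup[y,z]$; once this is done for a fixed (though arbitrary) such $w$, the conclusion follows.

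The next step is to invoke a contraction or quasi-convexity property for thick segments of $[x,y]$. Results of this flavor, due to Minsky and refined by Rafi, show that nearest-point projection onto an $\epsilon$-thick sub-segment $\alpha \subset [x,y]$ is strongly contracting: any metric ball disjoint from a uniform $R(\epsilon)$-neighborhood of $\alpha$ projects to a set of bounded diameter on $\alpha$. Granted such a statement around $w$, the argument would proceed in the classical hyperbolic style. I would project $x$, $y$, and $z$ onto the thick segment surrounding $w$; the projections of $x$ and $y$ land near the corresponding endpoints, so by strong contraction the projection of $z$ lies within bounded distance of one of them, say that of $y$. A standard triangle-inequality argument then forces $[x,z]$ to pass within $C$ of $w$, since any path from $x$ to $z$ must ``traverse'' the thick segment to move its projection from near $\pi(x)$ past $\pi(z)$, and contraction prevents such a path from staying outside a fixed neighborhood of $\alpha$.

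The main obstacle is making the contraction input strong enough to survive the weakened hypothesis. Because $\T(S)$ is not hyperbolic, contraction cannot hold globally; the $\epsilon$-thickness is precisely what rescues it along the relevant sub-segment. Moreover, since the hypothesis supplies thick points only in density $\theta$ rather than as a single long thick sub-interval, some care is needed to aggregate local behavior: one may need to select a \emph{collection} of well-spaced $\epsilon$-thick points inside $I$ and run the projection argument simultaneously, so that the contraction thresholds can be absorbed into a single constant $C$ that is uniform in $I$. The constant $L$ will have to be chosen large in terms of both $\epsilon$ and $\theta$, so that $\theta L / 2$ exceeds the scale $R(\epsilon)$ at which contraction begins, while $C$ will depend on $\epsilon$ and $\theta$ through these same thresholds.
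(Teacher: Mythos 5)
Your plan runs into a genuine gap at the very point where it tries to use a contraction property. The contraction results of Minsky and Rafi apply to an $\epsilon$-thick geodesic \emph{segment} of definite length; they do not apply to an isolated $\epsilon$-thick point, nor to a scattered collection of such points. The hypothesis of the theorem is that proportion $\theta$ of $I$ lies in $\T_\epsilon$, but this is a deterministic statement about measure, not structure: it is fully consistent with the thick part of $I$ being fragmented into a great many thick subsegments each of arbitrarily small length, interleaved with thin parts. In that situation there is no thick sub-segment $\alpha\subset I$ around your chosen point $w$ to project to, and your fallback of ``running the projection argument simultaneously'' over well-spaced thick points does not have a clear meaning -- there is still nothing of positive length that is known to be strongly contracting. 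So as written the argument never gets off the ground. (Note that the related observation in the paper -- that generic geodesics contain a genuinely thick interval of definite length -- is a probabilistic fact used for Theorem~B and would not establish the deterministic Theorem~A, which must hold for \emph{every} interval $I$ satisfying the density hypothesis.)

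The paper's actual proof circumvents this entirely by working in the curve complex rather than with contraction in $\T(S)$. The central input is Theorem~\ref{prop:def-prog} (Definite progress): an interval with thick-stat $\geq\theta$ must make linearly much progress in $\CC(S)$, and this is proved by a delicate count of thin-significant subsurfaces using Rafi--Schleimer's antichain bound, precisely so that fragmentation of the thick part is not an obstacle. Once one knows $\pi_S(I)$ is long, one chooses a point $w\in I$ whose curve-complex image lies near the middle of $\pi_S(I)$, far from $\pi_S([y,z])$ and from $\pi_S([x,y]\setminus I)$; hyperbolicity of $\CC(S)$ together with Masur--Minsky's unparametrized quasi-geodesic property produces $u\in[x,z]$ with $d_S(w,u)\leq\tau$. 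The heart of the proof is then bounding $d_V(w,u)$ for all proper subsurfaces $V$, using the reverse triangle inequality and Rafi's annular exception, and finally invoking the distance formula to conclude $d_\T(w,u)\leq C$. If you want to salvage a contraction-style argument, you would first need to establish something like Theorem~\ref{prop:def-prog} (or Hamenst\"adt's version of it) to convert the density hypothesis into controllable curve-complex progress; that is exactly the missing ingredient.
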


This is a generalization of a result of Rafi, Theorem~\ref{thm:RafiThinTriangles} below, which gives the same conclusion under the stronger hypothesis that the entire interval $I$ is thick. 
As a consequence (Corollary~\ref{cor:halfThinTriangles}), we can assert for instance that if the three sides of a triangle each spend more than half their time in the $\epsilon$--thick part, then half of each side is within a uniformly bounded distance of the union of the other two sides.

Since our goal is to study the geometry of generic rays, Theorem~\ref{thm:partiallyThickIntervalsAreClose} motivates a consideration of whether randomly sampled geodesics are, with high probability, likely to spend a definite fraction of time in a given thick part. To address this, we investigate a number of a priori different measures on Teichm\"uller space, which are natural from various points of view, and show that having a definite proportion in the thick part is in fact typical for all of them.

For instance, as a metric space $\T(S)$ carries a $(6g-6)$--dimensional Hausdorff measure
$\hausmeas$. This allows us to fix $x$ and choose $y\in \Ball rx$ at random from the $r$--ball as a way of sampling geodesics 
$[x,y]$.
Other measures come from the Finsler structure
(Busemann measure $\busemeas$ and Holmes--Thompson measure $\htmeas$), from the holonomy coordinates on the cotangent bundle (holonomy, or Masur--Veech, measure $\m$), and from the symplectic structure. In \S\ref{s:measures}, we find that all of these measures are mutually absolutely continuous and in fact are related by explicit inequalities.
Further interesting measures are provided by the identification of the metric $r$--sphere $\Sph rx$ with the unit sphere 
$\qd^1(x)$ in the vector space of 
quadratic differentials on $x$ via the Teichm\"uller map.  
The latter has various natural measures, and corresponding measures on $\Sph rx$ will be called {\em visual measures};
we will pay special attention to two standard visual measures, denoted $\vis_r(\vismeas)$ and $\vis_r(s_x)$. 
We will also use the term visual measures for the induced measures on $\T(S)$, denoted 
$\vis(\vismeas)$ and $\vis(s_x)$, obtained by integrating radially.

As one application of our statistical approach, we compute 
a statistic built by combining a metric and a measure to quantify how fast a space ``spreads out.''
Suppose we are given a  family of
probability measures $\mu_r$ on the spheres $\Sph rx$ of a metric space $(X,d)$. 
Then let $E(X)=E(X,x,d,\{\mu_r\})$ be  the average normalized distance between points on large spheres:
$$E(X):=\lim_{r\to\infty} \frac{1}{r} \int_{\Sph rx \times \Sph rx} d(y,z) \ \ d\mu_r(y) d\mu_r(z),$$
if the limit exists.   This creates a numerical index varying from $0$ (least spread out) to $2$ (most spread out).
It is shown in \cite{DLM2} that non-elementary hyperbolic groups all have 
$E(G,S)=2$ for any finite generating set $S$; this is also the case in the hyperbolic space $\H^n$ of 
any dimension endowed with the natural measure on spheres.
By contrast, it is shown that $E(\R^n)<\sqrt 2$ for all $n$, and that 
$E(\Z^n,S)<2$ for all $n$ and $S$, with nontrivial dependence on $S$.  (See \cite{DLM2} for more examples.)
Motivated by these findings, we may regard a measured metric space with $E=2$ as being ``statistically hyperbolic."

We note that finding that $E=2$ for hyperbolic groups makes use of homogeneity.
In contrast, it is easy to build (highly non-regular) locally finite trees, equipped with counting measure on spheres, for which $E$ obtains any value from $0$ to $2$; see \cite[p.4]{DLM2}. Thus neither $\delta$--hyperbolicity nor exponential growth is sufficient to ensure $E=2$. 
Indeed, since the measures are normalized, the growth rate of the space has no direct effect on $E$.
As an illustration, note that  
the Euclidean plane could be endowed with a visual measure, constructed just like the ones we study below in 
\S\ref{vismeas}, which would give it exponential volume growth while leaving $E=4/\pi$ unchanged.  On the other hand,
other measures on $\R^2$ would give different values of $E$;
the statistic is quite sensitive to the choice of measure.

The following theorem concerns the average distance between points in the ball $\Ball{r}{x}$ of radius $r$ centered at $x$. 
We show that this average distance is asymptotic to 
$2r$, which, in light of the triangle inequality, is the maximum possible distance.

\begin{mainthrm}\label{thm:balls}
Let $\mu$ denote the Hausdorff measure $\hausmeas$, holonomy measure $\m$, or either standard visual measure $\vis(\vismeas)$ or $\vis(s_x)$.
Then for every point $x\in \T(S)$,
\[\lim_{r\to\infty} \frac 1r \frac{1}{\mu (\Ball rx)^2}\int_{\Ball rx \times \Ball rx} d_{\T}(y,z) \ \ 
d\mu(y) d\mu(z)=2.\]
\end{mainthrm}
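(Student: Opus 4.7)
The upper bound is immediate from the triangle inequality: $d_\T(y,z) \leq d_\T(y,x) + d_\T(x,z) \leq 2r$ for all $y,z \in \Ball{r}{x}$, so the normalized integral is at most $2$. The substantive content is the matching lower bound, for which the plan is to produce a ``generic'' subset $G_r \subset \Ball{r}{x}$ with $\mu(G_r)/\mu(\Ball{r}{x}) \to 1$, such that $(\mu \times \mu)$-typical pairs $(y,z) \in G_r \times G_r$ satisfy $d_\T(y,z) \geq (2-\delta)r - O(1)$ for any prescribed $\delta > 0$. The set $G_r$ will consist of points $y$ with $d_\T(x,y) \geq (1-\delta)r$ for which the ray $[x,y]$ is uniformly $\theta$-thick, meaning that every subinterval of sufficient length contains a proportion $\theta$ of $\epsilon$-thick points.

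Establishing that $G_r$ has near-full measure amounts to two measure-theoretic facts. First, $\T(S)$ has exponential volume growth with respect to each of the four listed measures (using the Masur--Veech volume asymptotics and the mutual absolute continuity results of \S\ref{s:measures}), so the condition $d_\T(x,y) \geq (1-\delta)r$ is satisfied outside a set of exponentially smaller measure. Second, the ergodicity of the Teichm\"uller geodesic flow with respect to Masur--Veech measure implies that a generic orbit equidistributes in moduli space and hence spends a proportion $\theta = \theta(\epsilon) > 0$ of its time in the $\epsilon$-thick part; transferring this statement from orbits in moduli space to rays emanating from $x$ uses the parametrization of radial directions by $\qd^1(x)$ together with the identification of the various measures on $\qd^1(x)$ coming from the appropriate disintegration of Masur--Veech measure.

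The geometric heart of the argument is to show that for $(y,z) \in G_r \times G_r$ outside a set of vanishing $(\mu \times \mu)$-measure, the geodesic $[y,z]$ passes within a uniformly bounded distance $D$ of $x$. A nearest-point projection then yields $d_\T(y,z) \geq d_\T(x,y) + d_\T(x,z) - 2D \geq (2-2\delta)r - 2D$, which suffices after normalization and $\delta \to 0$. To produce such a $D$, I would exploit that generic pairs of thick rays from $x$ diverge rapidly: there is a uniform $t_0$ such that, on a set of pairs $(y,z) \in G_r \times G_r$ of $(\mu \times \mu)$-proportion tending to $1$, one has $d_\T(p, [x,z]) > C$ whenever $p \in [x,y]$ with $d_\T(x,p) > t_0$. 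Granted this divergence, apply Theorem~\ref{thm:partiallyThickIntervalsAreClose} to a subinterval $I \subset [x,y]$ of length exactly $L$ starting at distance $t_0$ from $x$: the conclusion $I \cap \Nbhd_C([x,z] \cup [y,z]) \neq \emptyset$ combined with divergence forces some $q \in I$ to lie in $\Nbhd_C([y,z])$, whence a uniform bound $D \leq t_0 + L + C$ follows from a short triangle-inequality computation.

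The main obstacle is quantifying the generic divergence property of thick rays from $x$, which is not directly contained in Theorem~\ref{thm:partiallyThickIntervalsAreClose} and requires separate analysis of the product measure on $\qd^1(x) \times \qd^1(x)$ (controlling how much mass concentrates near the diagonal) together with the contraction properties of nearest-point projections onto thick Teichm\"uller geodesics. Once this divergence is established for visual measure, its extension to the Hausdorff and holonomy measures follows from the mutual absolute continuity results of \S\ref{s:measures}, but verifying that all constants can be made uniform across the four measures simultaneously is a nontrivial bookkeeping step.
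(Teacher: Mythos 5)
Your overall strategy mirrors the paper's: the upper bound is trivial; for the lower bound one shows that, generically, $[x,y]$ spends a definite proportion of time in the thick part (the paper's ``thickness property (P1)'') and that generic pairs of rays become separated (the ``separation property (P2)''), and one then feeds these into Theorem~\ref{thm:partiallyThickIntervalsAreClose} to force $[y,z]$ to dip back near $x$, exactly as you propose. So the skeleton is right, and your version for the visual measures is essentially the paper's Theorem~\ref{thm:exp-decay-vis} plus Proposition~\ref{prop:thick-vis}.

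However, two points need attention, one minor and one serious. The minor one is a quantifier slip: for a \emph{fixed} $t_0$, the proportion of pairs $(y,z)$ for which $d_\T(y_{t_0},z_{t_0})<C$ is bounded away from $0$ (roughly $e^{-ht_0}$), so you cannot have a uniform $t_0$ with the good set's proportion tending to $1$. The paper resolves this by separating at time $\sigma r$ (which grows linearly but is a small fraction of $r$), so the bound on where $[y,z]$ passes near $x$ is $O(\sigma r)$ rather than $O(1)$; one then lets $\sigma\to 0$ after $r\to\infty$. Alternatively, if $t_0$ is allowed to depend on $\delta$, your argument goes through with the same quantifier order.

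The serious gap is the final paragraph. The visual measures $\vis(\vismeas)$ and $\vis(s_x)$ are \emph{not} mutually absolutely continuous with the holonomy measure $\m$ or Hausdorff measure $\hausmeas$, and \S\ref{s:measures} makes no such claim: what is proved there is that $\n$, $\m$, $\htmeas$ are scalar multiples, that $\busemeas=\hausmeas$, and that all five of those are pairwise comparable. The visual measures sit outside this comparison class (they depend on a basepoint and the choice of measure on $\QD^1(x)$), and the paper establishes properties (P1) and (P2) for them by a completely separate argument. Conversely, for $\m$ and $\hausmeas$, the paper cannot transfer from the visual case: Corollary~\ref{cor:thickness_for_hol} and Theorem~\ref{thm:exp-decay-hol} require the random walk discretization of Eskin--Mirzakhani, the counting bound on sample paths, and the volume estimates of Athreya--Bufetov--Eskin--Mirzakhani. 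That machinery does the heavy lifting for the non-visual measures and cannot be replaced by an absolute-continuity transfer; a substantial part of \S\ref{sec:separationAndThickness} and \S\ref{s:fellow-trav-stat} is exactly this missing piece.
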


Of course, by the remarks above, this also holds for all the other measures discussed in the paper.
Indeed, we will work with  properties  of  measures on $\T(S)$ that suffice to guarantee this conclusion:  
a {\em thickness property} (P1) defined in \S\ref{s:thick-stat} guaranteeing that typical rays spend
a definite proportion of their time in the thick part, and 
a {\em separation property} (P2) defined in \S\ref{s:fellow-trav-stat} asserting that typical pairs of rays will exceed any definite amount of separation.  In some places we use a stronger separation property (P3) which is 
a quantified version with an exponential bound.

With respect to the standard visual measures, the same methods yield:
\begin{mainthrm}
\label{thm:spheres}
For every point $x\in \T(S)$ and  either family $\{\mu_r\}$ of standard visual measures $\mu_r =\vis_r(\vismeas)$ or $\vis_r(s_x)$ on the spheres $\Sph{r}{x}$, we have
\[E(\T(S),x,d_\T,\{\mu_r\}) = 2.\]
\end{mainthrm}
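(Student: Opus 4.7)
The upper bound $E(\T(S), x, d_\T, \{\mu_r\}) \leq 2$ is immediate from the triangle inequality $d_\T(y,z) \leq 2r$ for $y, z \in \Sph{r}{x}$, so the real content is the matching lower bound. My plan is to show that for every $\delta>0$, once $r$ is sufficiently large there is a ``good'' set $G_r \subset \Sph{r}{x} \times \Sph{r}{x}$ with $(\mu_r \times \mu_r)(G_r) \geq 1 - \delta$ on which $d_\T(y,z) \geq 2r - K$ for a constant $K = K(\delta)$ independent of $r$. Combined with the trivial bound $d_\T(y,z) \geq 0$ off $G_r$, this gives
\[ \frac{1}{r}\int d_\T(y,z) \, d\mu_r(y) d\mu_r(z) \geq (1 - \delta)\Bigl(2 - \frac{K}{r}\Bigr) \longrightarrow 2(1-\delta), \]
and letting $\delta \to 0$ yields $E \geq 2$.

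The set $G_r$ is carved out using the two standing hypotheses on the visual measures: property (P1) (thickness) applied to each individual ray, and property (P2) (separation) applied to the pair. Fix $\epsilon > 0$ and let $\theta$ be the proportion supplied by (P1); let $C, L$ be the constants of Theorem~\ref{thm:partiallyThickIntervalsAreClose} for this $\epsilon$ and proportion $\theta/2$, and choose $D > 2C$. By (P2), the pairs $(y,z)$ with $d_\T(\gamma_y(t), \gamma_z(t)) \geq D$ for all $t \geq T = T(\delta)$ form a set of measure at least $1 - \delta/2$. By (P1) combined with a Birkhoff-type argument, for a further fraction of rays of measure at least $1 - \delta/4$ the tail $\gamma_y([T, T'])$ has thick proportion at least $\theta/2$, where $T' = T'(\delta)$ is bounded; pigeonhole within this tail then yields a length-$L$ subinterval $I_y$ with thick proportion at least $\theta/2$, and the same for $[x,z]$. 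Intersecting these three conditions produces $G_r$. For $(y,z) \in G_r$, every point $p = \gamma_y(t) \in I_y$ has $t \in [T, T']$, so combining the separation bound $d_\T(\gamma_y(t), \gamma_z(t)) \geq D$ with the elementary estimates $d_\T(\gamma_y(t), \gamma_z(s)) \geq D - |s-t|$ (via $\gamma_z(t)$) and $d_\T(\gamma_y(t), \gamma_z(s)) \geq |s-t|$ (via $x$) gives
\[ d_\T(\gamma_y(t), \gamma_z(s)) \geq \max\bigl(|s-t|, \, D - |s-t|\bigr) \geq D/2 > C \]
uniformly in $s \in [0, r]$. Thus $I_y$ is disjoint from the $C$-neighborhood of $[x,z]$, and Theorem~\ref{thm:partiallyThickIntervalsAreClose} forces some $p \in I_y$ to lie within $C$ of $[y,z]$. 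Choosing $q \in [y,z]$ with $d_\T(p,q) \leq C$, the triangle inequality gives $d_\T(y,q) \geq r - d_\T(x,p) - C$ and $d_\T(z,q) \geq r - d_\T(x,p) - C$, whence $d_\T(y,z) = d_\T(y,q) + d_\T(q,z) \geq 2r - 2 d_\T(x,p) - 2C \geq 2r - 2(T' + C)$, the required $2r - K$.

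The main subtlety is ensuring that the partially thick window $I_y$ can be placed within a bounded distance past the separation time $T$, rather than at some arbitrary location in $[0, r]$: the weak formulation of (P1) (a definite total proportion thick) tells us nothing about \emph{where} the thick mass sits, so a naive pigeonhole over the whole ray only pins down $I_y$ to a location within a fixed \emph{fraction} of $r$, which would degrade the lower bound to roughly $r$ instead of $2r$. Resolving this would rely on the specific construction of the visual measures $\vis(\vismeas)$ and $\vis(s_x)$ together with recurrence of the Teichm\"uller geodesic flow to the thick part of moduli space, yielding the sharper statement that for typical $y$ the thick visits to $\gamma_y$ have positive lower density on every tail. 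Given this refinement, the separation calculation and the application of Theorem~\ref{thm:partiallyThickIntervalsAreClose} are routine, and the quantified property (P3) is not required for this asymptotic statement.
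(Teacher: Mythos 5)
Your overall strategy---intersecting the thickness event from (P1) with the separation event from (P2) to produce a high-probability good set on which a partially thick subinterval of $[x,y]$ avoids the $C$-neighborhood of $[x,z]$ and then applying Theorem~\ref{thm:partiallyThickIntervalsAreClose}---is exactly the paper's, and your elementary estimate giving $d_\T(y_t,z_s)\geq \max(|s-t|,D-|s-t|)$ is sound. The paper dispatches the spheres theorem in one short paragraph by observing that Proposition~\ref{prop:thick-vis} and Theorem~\ref{thm:exp-decay-vis} were established for visual measures on spheres first (before integrating to annuli), so (P1) and (P2) hold verbatim there and the proof of Theorem~\ref{thm:stathyp-annulus} applies unchanged.

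However, the gap you flag as the ``main subtlety'' is not a real gap, and your assessment of what (P1) says is off. First, (P1) as defined is \emph{not} merely ``the ray has a definite total thick proportion'': it demands that $\thickfrac_\epsilon[x,y_t]\geq\theta$ for \emph{every} $t\in[\sigma r,r]$, so the thick mass cannot be pushed to the far end of the ray. Applying it with $t=2\sigma r$ forces $[y_{\sigma r},y_{2\sigma r}]$ (which is half of $[x,y_{2\sigma r}]$) to have thick proportion at least $2\theta-1$, and the paper chooses $\theta=3/4$ to make this $1/2$. Second, it is unnecessary to place the window $I_y$ within a bounded time $[T,T']$ independent of $r$: using $I_r=[y_{\sigma r},y_{2\sigma r}]$ gives $d_\T(y,z)\geq 2r-4\sigma r - O(1)$, so dividing by $r$ and letting $r\to\infty$ yields $\geq (1-\delta)(2-4\sigma)$, and sending $\sigma,\delta\to 0$ recovers $E=2$. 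Your worry that this ``degrades the lower bound to roughly $r$'' mistakes a degradation by the factor $(1-2\sigma)$ for a degradation by a factor $1/2$. As written your proof is incomplete because you defer the needed refinement (``positive lower density on every tail'') to recurrence of the geodesic flow without proving it; that refinement is in fact available for the standard visual measures via Proposition~\ref{prop:thick-vis}, whose conclusion holds for all $r>R_0$ with $R_0$ depending only on the error $\delta$, but since the $\sigma\to 0$ trick already works, the extra effort buys nothing for this theorem.
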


As a second application of our approach, we promote Theorem~\ref{thm:partiallyThickIntervalsAreClose} to a quantitative statement about the expected thinness of typical triangles. 
This is expressed in the following theorem, which shows that ``most triangles are mostly thin.''
For a fixed $\delta$, let  $0\leq \Theta_\delta(\triangle)\leq 1$ denote the proportion of the perimeter of a geodesic triangle $\triangle$ that lies within $\delta$ of the other two sides.
Then let $\Theta_\delta(X) = \Theta_\delta(X, x, d, \mu)$ be the limiting average of this value: 
$$\Theta_\delta(X) := \liminf_{r\to\infty} \frac{1}{\mu (\Ball rx)^2}\int_{\Ball rx \times \Ball rx} \Theta_\delta(\triangle(x,y,z)) \ d\mu(y)d\mu(z).$$

\begin{mainthrm}
\label{T:expected_thinness}\label{thm:expthin}
Let $\mu$ denote either the Hausdorff measure $\hausmeas$ or the holonomy measure $\m$. Then for all $x\in\T(S)$ and $\sigma > 0$ there exists  $\delta>0$ such that
$$\Theta_\delta(\T(S),x,d_\T,\mu) \geq 1-\sigma.$$
\end{mainthrm}

In other words, the proportion of a triangle's perimeter that is close to the other two sides can be made arbitrarily close to $1$ in expectation. 
By contrast, note that $\Theta_\delta(\R^n)=0$ for all $\delta$, whereas $\delta$--hyperbolic spaces $X$ automatically satisfy $\Theta_\delta(X)=1$ by definition.

We sketch here the main ideas in the proofs of the theorems. Theorem~\ref{thm:partiallyThickIntervalsAreClose} is put together with distance estimates coming from subsurface projections, using reverse triangle inequalities (following Masur--Minsky and Rafi), and antichain bounds (Rafi and Schleimer).  A crucial ingredient is to show that geodesics spending a definite proportion of time in the thick part have shadows that make definite progress in the curve complex (Theorem~\ref{prop:def-prog}).

The idea for Theorem~\ref{thm:balls} is that the  separation  property (P2) ensures that  most  pairs of geodesics will have stopped fellow-traveling in the Teichm\"uller metric by a threshold time.  Then one would hope that, as in a hyperbolic space, the geodesic joining their endpoints would follow the first geodesic back to approximately where they  separate before  following the other so that its length is roughly the sum of the lengths of the two geodesics, as on the left in Figure~\ref{schematic}.  
\begin{figure}[ht] 
\begin{tikzpicture}[scale=.4,inner sep=0]

\begin{scope}[line width=1pt]
\draw [->] (0,0) .. controls (-1.5,2) .. (-3,6) node [pos=.92] (x1) {} ;
\draw [->] (0,0) .. controls (1.5,2) ..  (3,6) node [pos=.92] (x2) {} ;
\draw [red] (x1) .. controls (0,-.5) .. (x2)  ;
\filldraw (0,0) circle (0.05) node [below=.2cm] {$x$};
\filldraw (x1) circle (0.05) node [ left=.2cm] {$y$};
\filldraw (x2) circle (0.05) node [right=.2cm] {$z$};
\end{scope}

\begin{scope}[line width=1pt,xshift=10cm]
\draw [->] (0,0) .. controls (-1.5,2) .. (-3,6) node [pos=.92] (x1) {} node [pos=.65] (x11) {} ;
\draw [->] (0,0) .. controls (1.5,2) ..  (3,6) node [pos=.92] (x2) {} node [pos=.6] (x22) {};
\filldraw (0,0) circle (0.05) node [below=.2cm] {$x$};
\filldraw (x1) circle (0.05) node [ left=.2cm] {$y$};
\filldraw (x2) circle (0.05) node [right=.2cm] {$z$};
\filldraw (x11) circle (0.05) node [ left=.2cm] {$y'$};
\filldraw (x22) circle (0.05) node [right=.2cm] {$z'$};
\draw [red] (x1) .. controls (-2.2,4).. (-1.5,3) .. controls (0,2.7) .. (1.5,2.8) .. controls (2.2,4)..(x2) ;
\end{scope}
\end{tikzpicture}
\caption{We will show that the geodesic between points on generic rays ``dips back'' near the basepoint. While Minsky's product regions theorem says that the connecting geodesic can instead take a ``shortcut'' when $[x,y']$ and $[x,z']$ go through thin parts corresponding to disjoint subsurfaces,  we show this effect is rare.\label{schematic}}
\end{figure}
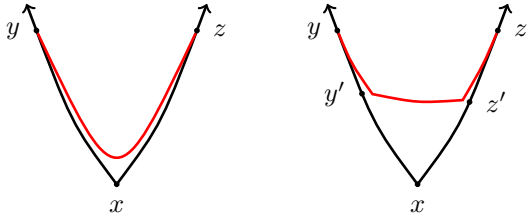

One obstruction to this hyperbolic-like behavior is that the pair of geodesics can enter thin parts corresponding to disjoint subsurfaces, in which case Minsky's product region theorem \cite{Min-PR} allows the length of the third side to be smaller than the sum, as on the right in Figure~\ref{schematic}.  
The thickness property (P1) and Theorem~\ref{thm:partiallyThickIntervalsAreClose} together rule out this shortcut behavior.
Theorem~\ref{thm:spheres} follows from this and exponential growth of the metric.

Theorem~\ref{T:expected_thinness} uses Theorem~\ref{thm:partiallyThickIntervalsAreClose} and a strengthening of the thickness property (P1) to find  thick points in various locations around a typical triangle. 
The proof then concludes by applying Rafi's fellow-traveling theorem \cite{Ra} to deduce that most of each side lies close to other sides.

In establishing the needed thickness and separation properties for the above results, we  use a variety of recently developed tools such as volume asymptotics in Teichm\"uller space (Athreya--Bufetov--Eskin--Mirzakhani \cite{ABEM}) and the random walk model for discretized Teichm\"uller geodesics  (Eskin--Mirzakhani \cite{EM}). A detailed treatment of the latter is included in Appendix~\ref{sec:thickness_for_random_walks}. 
We also make use of a simplified version of Rafi's distance formula \cite{R07}, which is derived in Appendix~\ref{repackaged}.

\subsection{Acknowledgments}
We would like to thank Benson Farb,   Curtis McMullen, and  especially Alex Eskin and Kasra Rafi for numerous helpful comments and explanations.  In particular, we are indebted to Kasra Rafi for encouraging us to investigate expected thinness (Theorem~\ref{T:expected_thinness}) and for suggesting the idea of the proof for Theorem~\ref{thm:interval}.
We are also grateful for the suggestions of the anonymous referees, which led to major improvements in the paper.

\section{Background}

\subsection{Teichm\"uller space and quadratic differentials}

Recall that Teichm\"uller space $\T(S)$ is the space of marked  Riemann surfaces $X$ that are homeomorphic to the topological surface $S$. More precisely, it consists of pairs $(X,f)$, where  $f\colon S\to X$ is a  homeomorphism, up to the equivalence relation that $(X_1,f_1)\sim (X_2,f_2)$ when there exists a conformal map 
$F\colon X_1\to X_2$ such that  $F\circ f_1$ is isotopic to $f_2$. 
Alternately, we may define  $\T(S)$ as the space of marked hyperbolic surfaces $(\rho,f)$; namely, 
the markings are maps $f\colon S\to \rho$ with
$(\rho_1,f_1)\sim (\rho_2,f_2)$ when there exists an isometry $F\colon\rho_1\to \rho_2$ such that $F\circ f_1$ is isotopic to $f_2$.

The space $\T(S)$ is homeomorphic to the ball $\R^{6g-6}$, and from now on we will use $h=6g-6$ to designate this dimension.
In this paper, we will typically denote a point of $\T(S)$ by $x$, regarding it either as a Riemann surface or a hyperbolic surface, and 
suppressing the marking $f$.

Using the first definition of $\T(S)$, 
 the Teichm\"uller distance is given by
\[d_\T((X_1,f_1), (X_2,f_2)):=\inf_{F\sim f_2\circ f_1^{-1}}\frac{1}{2} \log K(F),\]
where the minimum is taken over all quasiconformal maps $F$ and  $K(F)$ is the 
maximal dilatation of $F$. Equipped with this metric, Teichm\"uller space becomes a unique geodesic metric space. For $x,y\in \T(S)$, the Teichm\"uller geodesic segment joining $x$ to $y$ will usually be denoted $[x,y]$.  
We will also write $y_t$ for the time--$t$ point on the ray based at $x$ and going through $y$.

A {\em quadratic differential} on a Riemann surface $X$ is a holomorphic $2$--tensor
$q=\phi(z)dz^2$ on $X$.
The space of all quadratic differentials on all Riemann surfaces homeomorphic to $S$ is denoted $\QD(S)$. A point of $\QD(S)$ will be denoted 
$q$, with the underlying 
complex structure implicit in the notation.  The real dimension of $\QD(S)$ is $12g-12=2h$.  
Reading off the Riemann surface, we obtain a projection to the Teichm\"uller space  $\pi\colon \QD(S) \to \T(S)$. Under this projection, $\QD(S)$ forms vector bundle over $\T(S)$ which is canonically identified with the cotangent bundle of $\T(S)$.
Each fiber $\QD(X)$ is equipped with a norm given by the total area of $q$; namely $\|q\|=\int_X |\phi(z)dz^2|$. 
Recall that $d_\T$  is not a Riemannian metric on $\T(S)$, but rather a Finsler metric; it comes from 
dualizing the norm on $\QD$ to give a norm on each tangent space of $\T(S)$ that is not induced by any inner product.

It is the famous theorem of Teichm\"uller that the infimum in the definition of $d_\T$ 
is realized uniquely by a {\em Teichm\"uller map} 
from $X_1$ to $X_2$.  
A Teichm\"uller map is determined by an initial quadratic differential $q=\phi(z)dz^2$ on 
$X_1$ and the number $K$. The Teichm\"uller map expands along the 
horizontal trajectories of $q$ by a factor of $K^{1/2}$ and contracts along the vertical trajectories by the same factor to obtain 
a terminal quadratic differential $q'$ on the image surface $X_2$. 
If we fix $q$ and let $K=e^{2t}$ vary over $t\in [0,\infty)$ we get a Teichm\"uller geodesic ray. 
We will denote by $\nu_+$ the horizontal foliation of $q$ and by $\nu_-$ the vertical foliation.

Recall that the {\em mapping class group} of $S$, defined by $$\Mod(S):={\rm Diff}^+(S)/{\rm Diff}_0(S),$$
is the discrete group of orientation-preserving diffeomorphisms of $S$, up to isotopy. This group acts isometrically on $\T(S)$ by changing the marking: $\phi\cdot(X,f) = (X,f\circ\phi^{-1})$. In fact, by a result of Royden \cite{Royden71}, $\Mod(S)$ is the full group of (orientation-preserving) isometries of $(T(S),d_\T)$.

\subsection{Curve complex}
\label{sec:curve_graph}
When we speak of a {\em curve} on $S$, this will mean an isotopy class of essential simple closed curves. Given $x\in \T(S)$, the \emph{length} $l_x(\alpha)$ of a curve $\alpha$ is the  length of the geodesic in the isotopy class in the hyperbolic metric $x$.

We recall the definition of the  {\em curve complex} (or curve graph)  
$\CC(S)$ of $S$. The vertices of $\CC(S)$ are the curves on $S$.  
Two vertices are joined by an edge if the corresponding curves can be realized disjointly.
Assigning  edges to have length $1$ we have a metric graph. Properly speaking, $\CC(S)$ is the flag complex associated
to this curve graph, but since we are working coarsely, we can identify $\CC(S)$ with the graph.

It is known that the curve graph  is hyperbolic \cite{MM}.
That is, there exists a constant $\delta>0$ such that every geodesic triangle in $\CC(S)$ is $\delta$--thin:  
each side of the triangle is contained in the union of the $\delta$--neighborhoods of the other two sides. 
Furthermore, in any $\delta$--hyperbolic metric space and for any quasi-isometry constants $(K,C)$, there exists a constant $\tau$, depending only on $\delta,K,C$, such that any two 
$(K,C)$--quasi-geodesic segments with the same endpoints remain within $\tau$ of each other. Since actual geodesics are $(1,0)$--quasi-geodesics, this implies that every $(K,C)$--quasi-geodesic triangle is $(\delta+2\tau)$--thin.

\subsection{Thick parts and subsurface projections}
\label{sec:subsurface_projections}

For any given $\epsilon$, we say a curve is $\epsilon$--short if its hyperbolic length is less than $\epsilon$.
Then define the {\em $\epsilon$--thick part} of Teichm\"uller space to be the subset 
$\T_{\epsilon}\subset\T(S)$ corresponding to those hyperbolic surfaces on which no curve is $\epsilon$--short.
Its complement is called the {\em $\epsilon$--thin part} or, when $\epsilon$ is understood, simply the thin part.

For each $x\in \T(S)$  there is associated a  {\em Bers marking} $\mu_x$. To construct $\mu_x$, greedily choose a shortest {\em pants decomposition} of the surface (a collection of $3g-3$ disjoint simple geodesics). Then for each pants curve $\beta$, choose a shortest  geodesic  crossing $\beta$ minimally (either once or twice depending on the topology) that is disjoint from all other pants curves. The total collection of $6g-6$ curves is called a Bers marking and is defined up to finitely many choices. Notice that the curves comprising $\mu_x$ form a diameter--$2$ subset of $\CC(S)$.

Recall that there exists a universal \emph{Margulis constant} such that any two curves with hyperbolic length (on any surface $x\in \T(S)$) less than this value are disjoint. When discussing the $\epsilon$--thick part $\T_\epsilon$, we always assume $\epsilon$ is less than the Margulis constant. In particular, this ensures that for $x\in \T(S)\setminus\T_\epsilon$, the Bers marking $\mu_x$ contains every curve $\alpha$ with $l_x(\alpha)\leq \epsilon$.

Throughout, a \emph{proper subsurface of $S$} will mean a compact, properly embedded subsurface $V\subset S$ which is not 
equal to $S$ and for which the induced map on fundamental groups is injective. Subsurfaces which are isotopic to each other 
will not be considered distinct. The proper subsurfaces of $S$ fall into two categories, annuli and non-annuli, which behave 
somewhat differently. Nevertheless, we will strive to develop intuitive notation under which these two possibilities may be dealt 
with on equal footing.

Every proper subsurface $V$ has a nonempty boundary $\partial V$ consisting of a disjoint union of  
curves on $S$. We say that two subsurfaces $V$ and $W$ \emph{transversely intersect}, denoted $V\pitchfork W$, 
if they are neither (isotopically) disjoint nor nested. In this case, $\partial V$ necessarily intersects $W$, and $\partial W$ 
intersects $V$.  

Consider a non-annular subsurface $V$, possibly equal to $S$. 
The \emph{subsurface projection} $\pi_V(\beta)$ of a simple closed curve $\beta\subset S$ to $V$ is defined as follows: 
Realize $\beta$ and $\partial V$ as geodesics (in any hyperbolic metric on $S$). If $\beta\subset V$, then $\pi_V(\beta)$ is defined to be $\beta$. 
If $\beta$ is disjoint from $V$, then $\pi_V(\beta)$ is undefined. Otherwise, $\beta\cap V$ is a disjoint union of finitely many homotopy classes 
of arcs with endpoints on $\partial V$, and we obtain $\pi_V(\beta)$ by choosing any arc and performing a surgery along $\partial V$ to create 
a simple closed curve contained in $V$. The subsurface projection of a point $x\in\T(S)$ is then defined to be the collection
\[\pi_V(x) := \{\pi_V(\beta)\}_{\beta\in\mu_x}\]
 of curves obtained by varying $\beta$ in the Bers marking at $x$. This is a non-empty subset of the curve complex $\CC(V)$ with 
 uniformly bounded diameter. 

\begin{definition}[Non-annular projection distance]
For a non-annular subsurface $V\subseteq S$, the \emph{projection distance in $V$} of a pair of points $x,y\in\T(S)$ is defined to be
\[d_V(x,y):=\text{diam}_{\CC(V)}(\pi_V(x)\cup\pi_V(y)).\]
In particular, $d_S(x,y)$ denotes the curve complex distance. When convenient, we will also denote this distance by $d_{\CC(V)}:=d_V$.
\end{definition}

For an annular subsurface $A\subset S$ with \emph{core curve} $\alpha = \partial A$, there are two kinds of projection distances: one that measures twisting about $\alpha$ and is analogous to the definition above, and a second which also incorporates the length of $\alpha$. Any simple closed curve $\beta$ that crosses $\alpha$ may be realized by a geodesic and then lifted to a geodesic $\tilde{\beta}$ in the annular cover $\tilde{A}$, that is, the quotient of $\H^2$ by the deck transformation corresponding to $\alpha$, with the Gromov compactification.    For a pair $\beta, \gamma$ of such curves, we may then consider the intersection number $\textrm{i}(\tilde{\beta},\tilde{\gamma})$ in $\tilde{A}$. The \emph{twisting distance} in $A$ of a pair of points $x,y\in \T(S)$ is then defined as
\[d_{\CC(A)}(x,y):= \sup_{\beta\in \mu_x, \gamma\in \mu_y} \textrm{i}_{\tilde{A}}(\tilde{\beta},\tilde{\gamma}).\]
We additionally define a hyperbolic projection distance as follows.

\begin{definition}[Annular projection distance]
\label{def:annular_dist}
For an annular subsurface $A\subset S$ with core curve $\alpha = \partial A$, we let $\H_\alpha$ denote a copy of the standard horoball $\{\text{Im}(z)\geq 1\}\subset \H^2$. 
Given $x,y\in \T(S)$, we consider the points $(0,1/l_x(\alpha))$ and $(d_{\CC(A)}(x,y),1/l_y(\alpha))\in \H^2$ and denote their closest point projections to the horoball $\H_\alpha$ by
\[\pi_\alpha(x) = \left(0,\max\left\{1,\frac{1}{l_x(\alpha)}\right\}\right),\quad
\pi_\alpha(y) = \left(d_{\CC(A)}(x,y),\max\left\{1,\frac{1}{l_y(\alpha)}\right\}\right).\]
The \emph{projection distance} in $A$ (or \emph{hyperbolic distance} $d_{\H_\alpha}$) between $x$ and $y$ is then defined to be
\[d_A(x,y):= d_{\H^2}\left(\pi_\alpha(x),\pi_\alpha(y)\right).\]
\end{definition}

\subsection{Notation}
\label{sec:notation}

Following Rafi \cite[\S2.4]{R07}, we fix a parameter $\epsilon_0 > 0$ for the entirety of this paper which is smaller than the Margulis constant and small enough for a few other fundamental results to hold 
(Minsky's product regions theorem and Rafi's distance estimates described in the following section).
Note that the definition of $\epsilon_0$ depends only on the topology of the surface $S$, and we therefore view $\epsilon_0$ as a global constant.

Our analysis involves many inequalities that have controlled multiplicative and additive error. To streamline the the presentation, we will often avoid explicitly writing the constants involved and will instead rely on the following notation: For real-valued expressions $A$ and $B$, we use the notation
\[A\lmul B\]
to mean that there exists a universal constant $c\geq 1$, depending only on the topology of the surface $S$, such that $A\leq c B$. We will use $A\emul B$ to mean that $A\lmul B$ and $A\gmul B$ both hold. (The dot in the symbols indicates that the error is only multiplicative.) When allowing for multiplicative and additive error we will instead use symbols $\lboth$, $\gboth$, and $\eboth$. Thus $A\eboth B$ means that there exists a universal constant $c\geq 1$ so that $A\leq cB+c$ and $B\leq cA+c$.

When the implied constant depends on additional parameters we will list these as subscripts of the binary relation. For example, $A\lmul_{\epsilon,\theta} B$ means that there exists a constant $c$ depending only on $\epsilon$, $\theta$, and the topology of $S$ such that $A\leq cB$.

\subsection{Distance formula}\label{s:dist-form}
The following distance formula due to Rafi relates the \Teich distance between two points $x$ and $y$ to the combinatorics of the corresponding Bers markings $\mu_x$ and $\mu_y$. Recall the global constant $\epsilon_0 > 0$ introduced in \S\ref{sec:notation} above.

\begin{theorem}[Distance formula, Rafi \cite{R07}]\label{thm:RafiDistFormula}
Given any sufficiently large threshold $M_0$, for all $x,y\in \T(S)$ we have
\begin{align*}
 d_\T(x, y )\; &\asymp_{M_0}
\; d_S(x,y) 
+ \sum_{V} \thresh{d_V(x,y)}_{M_0}
+ \max_{\alpha\in\Gamma_{xy}}d_{\H_\alpha}(x,y)\\
&\ + \!\!\!\sum_{A \,:\, \partial A \not \in \Gamma_{xy}}\!\!\!\!\plog\thresh{d_{\CC(A)}(x,y)}_{M_0}
+\max_{\alpha \in \Gamma_x}\,\plog\left(\frac{1}{l_{x}(\alpha)}\right)
 +\max_{\alpha \in \Gamma_y}\,\plog\left(\frac{1}{l_{y}(\alpha)}\right),
\end{align*}
where the first sum is over all non-annular proper subsurfaces $V\subsetneq S$, where $\Gamma_{xy}$ is the set of $\epsilon_0$--short curves in both $x$ and $y$, $\Gamma_x$ is the set of curves that are $\epsilon_0$--short 
in $x$ but not in $y$, and $\Gamma_y$ is defined similarly. Here and throughout, $\plog$ is a modified logarithm so that $\plog a = 0$ for $a\in[0,1]$; and $\thresh{\cdot}_{M_0}$ is a threshold function for which $\thresh{N}_{M_0} := N$ when $N\geq M_0$ and $\thresh{N}_{M_0} := 0$ otherwise.
\end{theorem}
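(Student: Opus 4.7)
The plan is to prove $d_\T(x,y) \emul_{M_0} \Phi_{M_0}(x,y)$, where $\Phi_{M_0}(x,y)$ denotes the right-hand side, by establishing the two coarse inequalities separately.

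For the lower bound $\Phi_{M_0}(x,y) \lmul_{M_0} d_\T(x,y)$, the first step is that subsurface projection is coarsely Lipschitz as a map from $(\T(S),d_\T)$ to each curve graph $\CC(V)$ (via the Bers marking), following Masur--Minsky. This immediately bounds each $d_V(x,y)$ and $d_S(x,y)$ individually by $d_\T(x,y)$. Promoting these individual bounds into a sum requires a Behrstock-style antichain argument: subsurfaces with projection distance exceeding $M_0$ can be linearly ordered along the geodesic $[x,y]$, which allows their contributions to be summed with only bounded overcounting. The length terms $\plog(1/l_x(\alpha))$ come from Minsky's product regions theorem, which identifies the $\epsilon_0$--thin part around $\alpha$ coarsely with a product of a horoball $\H_\alpha$ and a lower-complexity Teichm\"uller space; reaching depth $1/l$ in $\H_\alpha$ costs Teichm\"uller distance $\log(1/l)$. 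The hyperbolic distance $d_{\H_\alpha}$ for $\alpha \in \Gamma_{xy}$ is exactly the distance in this horoball factor, while in a non-short annulus only $\plog$ of the twist is felt because Dehn twisting moves along horocycles rather than vertical geodesics.

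For the upper bound $d_\T(x,y) \lmul_{M_0} \Phi_{M_0}(x,y)$, the plan is to construct an explicit path. Using the Masur--Minsky hierarchy machinery, form a tight geodesic in $\CC(S)$ from a curve of $\mu_x$ to a curve of $\mu_y$, together with subordinate hierarchies in every subsurface whose projection distance exceeds $M_0$. This hierarchy yields a sequence of Bers markings interpolating between $\mu_x$ and $\mu_y$; each consecutive pair differs by an elementary change (a curve replacement, a twist, or a length adjustment) which can be realized by a segment in $\T(S)$ of controlled length, with the product regions theorem used to perform simultaneous moves in disjoint thin parts efficiently. Summing the cost of these segments reproduces $\Phi_{M_0}(x,y)$ up to multiplicative error.

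The main obstacle is the annular bookkeeping, where three regimes must be combined without interference: curves short on both endpoints ($\alpha \in \Gamma_{xy}$, contributing the full horoball distance $d_{\H_\alpha}$, taken as a maximum rather than a sum because these horoballs appear as product factors simultaneously active), curves short on only one side ($\alpha \in \Gamma_x \cup \Gamma_y$, contributing only a length term $\plog(1/l)$ with the twist absorbed), and annuli that are never short (contributing only $\plog$ of the twist distance). Ensuring that these combine additively without double counting, that the threshold $M_0$ cleanly suppresses small contributions while preserving the antichain structure, and that the maxima versus sums are correctly distributed, is the technical heart of the argument and requires the sharp subordination and ordering lemmas of Masur--Minsky as refined for the Teichm\"uller setting.
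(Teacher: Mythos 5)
This theorem is not proved in the paper: it is quoted verbatim from Rafi~\cite{R07}, so there is no internal argument to compare your proposal against. What you have written is a proposed proof of a result the authors take as an external input, and the paper's subsequent contribution (Proposition~\ref{prop:repackage} and Lemmas~\ref{lem:reorganizedDistFormula}--\ref{lem:twist+lenVShyp}) is only a repackaging of the cited formula, not a re-derivation of it.

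As to the sketch itself: your outline captures the Masur--Minsky flavor of the argument and the correct taxonomy of the annular terms (a maximum over $\Gamma_{xy}$ because simultaneously short curves are disjoint and sit in a product region; $\plog$ of the twist coordinate for non-short annuli; a length term alone for one-sidedly short curves), and the antichain/ordering idea is indeed the right mechanism for promoting individual coarse-Lipschitz bounds to a summed lower bound. However, the central technical engine of Rafi's proof is missing from your account: [R07] relies on his earlier characterization of short curves along a Teichm\"uller geodesic and on extremal-length estimates in terms of subsurface coefficients, which is what actually relates $d_\T$ to the combinatorial expression; the comparison is routed through the already-established Masur--Minsky formula for the marking complex rather than by re-resolving a hierarchy into an explicit path in $\T(S)$ as you propose for the upper bound. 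Your hierarchy-path construction is plausible in spirit but is not Rafi's argument, and without the extremal-length analysis the step ``each elementary change can be realized by a Teichm\"uller segment of controlled length'' is unjustified, particularly where thick and thin regimes interact. In short, this is a reasonable strategy description for a different (though related) proof, not a proof, and it should be presented as a citation rather than derived.
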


By instead making all annular measurements with the hyperbolic distance on $\H_\alpha$ we will obtain a particularly simple restatement of this formula. 

\begin{proposition}[Repackaged distance formula]
\label{prop:repackage}
Given any sufficiently large threshold $M_0$, for all $x,y\in \T(S)$ we have:
\begin{equation}\label{dist}
  d_\T(x,y)\ 
\asymp_{M_0}\ 
    d_S(x,y) 
+ \sum_Y \thresh{d_Y(x,y)}_{{M_0}}
\end{equation}
Here, the sum is over all (annular and non-annular) proper subsurfaces. 
\end{proposition}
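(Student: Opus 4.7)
The strategy is a direct, term-by-term comparison with Rafi's formula (Theorem~\ref{thm:RafiDistFormula}): the curve complex term $d_S(x,y)$ and the sum over non-annular subsurfaces appear identically in both formulas, so the entire content of the proposition is that the annular terms of Rafi's formula---the twisting sum $\sum\plog\thresh{d_{\CC(A)}}_{M_0}$, the horoball max on $\Gamma_{xy}$, and the two length maxes $\max\plog(1/l_x(\alpha))$ and $\max\plog(1/l_y(\alpha))$---coarsely collapse to a single clean sum $\sum_A\thresh{d_{\H_\alpha}(x,y)}_{M_0}$ over all annular $A$. The conceptual reason is that the hyperbolic distance in the horoball $\H_\alpha$ simultaneously encodes the twisting along $\alpha$ and the lengths $l_x(\alpha),l_y(\alpha)$.

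The first reduction is to replace each ``$\max$'' in Rafi's formula by a ``$\sum$'', which changes the right-hand side by at most a uniform multiplicative factor since $|\Gamma_{xy}|,|\Gamma_x|,|\Gamma_y|\leq 3g-3$. After this, the annular contributions can be grouped annulus-by-annulus: for $\alpha\in\Gamma_{xy}$ the total contribution is $d_{\H_\alpha}(x,y)$; for $\alpha\in\Gamma_x\setminus\Gamma_{xy}$ it is $\plog\thresh{d_{\CC(A)}(x,y)}_{M_0}+\plog(1/l_x(\alpha))$, with the symmetric expression for $\Gamma_y\setminus\Gamma_{xy}$; and for $\alpha$ short on neither side it is just $\plog\thresh{d_{\CC(A)}(x,y)}_{M_0}$. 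The goal is then to verify that in every case the grouped contribution is $\asymp_{M_0}\thresh{d_{\H_\alpha}(x,y)}_{M_0}$.

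The key tool is the standard coarse formula for hyperbolic distance in the upper half-plane: writing $\tilde a=\max(1,1/l_x(\alpha))$, $\tilde b=\max(1,1/l_y(\alpha))$, and $t=d_{\CC(A)}(x,y)$, the definition of $d_{\H_\alpha}$ yields
\[ d_{\H_\alpha}(x,y)\;\asymp\;|\log\tilde a-\log\tilde b|\,+\,2\plog\bigl(t/\max(\tilde a,\tilde b)\bigr). \]
When $\alpha\notin\Gamma_x\cup\Gamma_y$, the heights $\tilde a,\tilde b$ are bounded, so the right-hand side reduces to $\plog t$, which agrees with $\plog\thresh{t}_{M_0}$ after absorbing the bounded gap between the two thresholding conventions into the $\asymp_{M_0}$ error. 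For $\alpha\in\Gamma_{xy}$ the grouped contribution is literally $d_{\H_\alpha}$, and the thresholding drops at most a bounded constant per annulus. The principal obstacle is the asymmetric case $\alpha\in\Gamma_x\setminus\Gamma_{xy}$, where the formula becomes $d_{\H_\alpha}\asymp\log\tilde a+2\plog(t/\tilde a)$; this must be matched to the additive expression $\log\tilde a+\plog\thresh{t}_{M_0}$ of Rafi's formula. Comparing the two regimes $t\leq\tilde a$ and $t>\tilde a$ separately, one checks that the ratio of these two quantities always lies in a fixed interval (roughly $[1/2,2]$), which gives the desired coarse equivalence and completes the proof.
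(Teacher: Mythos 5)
Your proposal is correct and follows essentially the same strategy as the paper: convert the maxes over $\Gamma_{xy},\Gamma_x,\Gamma_y$ into sums (losing only a uniform multiplicative factor), group the annular contributions of Rafi's formula annulus-by-annulus, and then show each grouped contribution is coarsely comparable to $d_{\H_\alpha}(x,y)$. The paper packages this last comparison through the auxiliary max $H_A=\max\{\plog d_{\CC(A)},\plog(1/l_x(\alpha)),\plog(1/l_y(\alpha))\}$ and the bilipschitz estimate $H_A\asymp d_A$ of Lemma~\ref{lem:twist+lenVShyp}, whereas you invoke the explicit coarse formula for distance in the upper half-plane directly; but the underlying hyperbolic-geometry estimate and the case analysis on the relative sizes of $t$, $\tilde a$, $\tilde b$ are the same in substance.
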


\begin{rem}
Our definition of $d_{\H_\alpha}=d_A$ is technically different than that used by Rafi in \cite{R07}; however, the two definitions agree up to bounded additive error.
\end{rem}

 In calling it ``repackaged" we mean to say that the content of (\ref{dist}) is essentially contained in
Rafi \cite{R07}.  We include a detailed proof here in Appendix~\ref{repackaged}.

\subsection{Thin intervals}
\label{sec:thinness}
We will use some results from Rafi's work  combinatorializing the Teichm\"uller metric. Specifically, Corollary 3.4 and Proposition 3.7 of \cite{R07} show that for every Teichm\"uller geodesic and every proper subsurface $V$, there is a (possibly empty) interval along the geodesic where $\partial V$  is short. Outside of this interval, the  projections $d_V$ move by at most a bounded amount.
In the form that we will use below: for each positive $\epsilon \leq \epsilon_0$
there are positive constants $\M_\epsilon$ and $\epsilon'\leq\epsilon$
such that for any pair of points $x,y\in \T(S)$  there is a possibly empty (and not uniquely defined) connected interval $\I_V^\epsilon$ 
along the geodesic segment $[x,y]$ such that 
\begin{itemize}
\item for $a\in \I_V^\epsilon$, the length each component of $\partial V$ on $a$ is at most $\epsilon$;
\item for $a \in [x,y]\setminus \I_V^\epsilon$, some component $\beta$ of $\partial V$ has $l_a(\beta)\geq \epsilon'$;
\item for $a,b$ in the same component of $[x,y]\setminus \I_V^\epsilon$, we have $d_V(a,b)< \M_\epsilon$; and 
\item if $V\pitchfork W$ then $\I_V^\epsilon\cap \I_W^\epsilon=\emptyset$. 
\end{itemize}
This $\I_V^\epsilon$ is called the {\em $\epsilon$--thin interval} for $V$, or just the \emph{thin interval} when $\epsilon$ is understood. 
While  $[x,y]$ is suppressed in the notation, the geodesic with respect to which the interval $\I_V^\epsilon$ is defined should be clear from context. 

\begin{rem}
Note that for us $\I_V^\epsilon$ is a segment in Teichm\"uller space, whereas Rafi works with the corresponding time interval $I\subset \R$. 
We also  caution that $\I_V$ is not necessarily the same as the ``active interval'' for $V$ considered by Rafi in \cite{Ra}, where it is additionally required that the restriction of $[x,y]$ to $V$ behaves like a unit-speed 
Teichm\"uller geodesic in $\T(V)$.
\end{rem}

If $\I_V^\epsilon\neq\emptyset$ we will say that $V$ {\em becomes thin} along $[x,y]$. In particular if  $d_V(x,y)\geq \M^\epsilon$, then $\I_V^\epsilon\neq \emptyset$ and so $V$ becomes thin along $[x,y]$. Note that the second condition above says that the complement of the union of thin intervals (for all proper subsurfaces) lies in the $\epsilon'$--thick part of $\T(S)$.

We always assume that $\M_\epsilon$ is chosen large enough to be a valid threshold in in the distance formula \eqref{dist}. In the case $\epsilon=\epsilon_0$ we will omit the parameter and simply write $\M$ and $\I_V$ for $\M_{\epsilon_0}$ and $\I_V^{\epsilon_0}$. Thus $\M$ is a global constant that depends only on the topology of $S$.

\subsection{Reverse triangle inequality}
We will repeatedly use the fact that the projection of a Teichm\"uller geodesic to the curve complex of any subsurface other than an annulus forms an unparameterized quasi-geodesic that, in particular, does not backtrack. This phenomenon is captured by the following ``reverse triangle inequality,'' which was proved first in the case of the curve complex of the whole surface by Masur--Minsky \cite{MM} and then for general subsurfaces by Rafi \cite[Thm B]{Ra}. 

\begin{lemma}[Reverse triangle inequality]
\label{lem:subsurfacequasi}
There exists a global constant $\B>0$ such that for any non-annular subsurface $V$ (including $S$ itself)  and for any geodesic interval $[x,y]\subset\T(S)$ and any point $a\in [x,y]$ we have 
\begin{align}\label{eqn:reverseTriangle}
d_V(x,a)+d_V(a,y) &\leq d_V(x,y)+\B.
\end{align}
\end{lemma}

In the exceptional annulus case, Rafi \cite{Ra} shows that the reverse triangle inequality for $d_{\CC(A)}$ does hold when the twisting distance is measured with respect to the quadratic differential defining the geodesic. However, it is unknown whether the reverse triangle inequality holds with twisting defined in terms of the hyperbolic metric, as it is in this paper. So, to deal with this exceptional case we instead appeal to the following result, 
still following Rafi \cite{R05}, (c.f., Theorem 5.5 of \cite{Ra}):  though projection to the annulus may not be large between the points we consider, we find a subsurface that does register a large projection distance.

\begin{lemma}[R.T.I. exception]
\label{lem:annulusquasi}\label{exception}
For any sufficiently large $M'$, there exists $\epsilon'>0$ with the following property. Suppose that a simple curve $\alpha$ on $S$ satisfies $l_a(\alpha)\leq\epsilon'$ for some point $a\in [x,y]$ with $d_S(a,x),d_S(a,y)\geq 6+\B$. Then there exists a subsurface $Z\subsetneq S$ disjoint from $\alpha$ (possibly the annulus with core curve $\alpha$) for which $d_Z(x,y)> M'$.
\end{lemma}
\begin{proof}
Let $\nu^\pm$ denote horizontal and vertical foliations for the Teichm\"uller geodesic $[x,y]$. By choosing $\epsilon'$  small, thus forcing $1/l_a(\alpha)$ to be large, Theorem 5.6 of \cite{R05} ensures that we can find a component $Y$ of $S\setminus \alpha$ (possibly the annulus with core curve $\alpha$) for which the intersection number
\[i_Y(\nu^+_Y,\nu^-_Y)\]
is as large as we like (see \S2 of \cite{R05} for the definitions of $i_Y$ and of the projections $\nu^\pm_Y$ to the ``arc and curve complex'' of $Y$). As is well known, the two arc systems $\nu^\pm_Y$ can only have large intersection number if the projection $d_Z(\nu^+_Y,\nu^-_Y)$ to some subsurface $Z\subset Y$ is large. 
Thus, by choosing $\epsilon'$ sufficiently small, we may assume that there is a subsurface $Z\subset S$ disjoint from $\alpha$ (possibly the annulus with core curve $\alpha$) for which $d_Z(\nu^+,\nu^-)> M'+2\M$. It follows that $Z$ determines a nonempty thin interval $\I_Z$ along the bi-infinite geodesic $[\nu^-,\nu^+]$. Moreover, since $d_S(\alpha,\partial Z) \leq 1$, the reverse triangle inequality implies that for any $t\in \I_Z$ and any $b\in [t,a]$ we have
\[d_S(a,b) \leq d_S(a,b) + d_S(b,t) \leq d_S(a,t) + \B \leq 5 + \B.\]
Thus it must be the case that $\I_Z \subset[x,y]$, for otherwise either $d_S(x,a)$ or $d_S(y,a)$ would be smaller than $5 + \B$, which is not the case. Therefore the projection to $Z$ changes by at most $\M$ outside of $[x,y]$, and so we conclude that $d_Z(x,y)> M'$ as desired.
\end{proof}

Going forward, for each $\epsilon\leq \epsilon_0$ we additionally assume that $\M_\epsilon$ is chosen large enough to satisfy $\M_\epsilon \geq \B$ and so that Theorem~\ref{exception} above applies with $M' = \M_\epsilon$.

\section{The geometry of statistically thick geodesics}
\label{sec:thick_geodesics}
In the study of Teichm\"uller geometry, one finds that the thick part $\T_\epsilon$ behaves very much like a negatively-curved space. For example in Theorems 4.4 and 7.6 of \cite{KL} Kent and Leininger show that geodesic triangles contained entirely within $\T_\epsilon$ are $\delta$--thin for some $\delta$ depending on $\epsilon$, 
and that the projection of any geodesic $\gamma\subset\T_{\epsilon}$ to the curve complex $\CC(S)$ is an honest \emph{parametrized} quasi-geodesic which, in particular, must progress at a linear rate. These facts can also be deduced from the distance formula (\ref{dist}) together with properties of thin intervals $\I_V^\epsilon$ (\S\ref{sec:thinness} above).

All of these negative-curvature properties are lost when geodesics are allowed to enter the thin part. For example, Minsky's product region theorem \cite{Min-PR} shows that geodesic triangles in $\T(S)\setminus \T_\epsilon$ need not be $\delta$--thin for any $\delta$, and it is easy to construct arbitrarily long geodesics in $\T(S)\setminus\T_\epsilon$ that project to uniformly bounded diameter sets in $\CC(S)$.

However, each of these extremes---living entirely in $\T_\epsilon$ or entirely in $T(S)\setminus\T_\epsilon$---is quite rare, as a typical Teichm\"uller geodesic will spend part of its time in $\T_\epsilon$ and part of its time in 
$\T(S)\setminus \T_\epsilon$. In this section we develop tools to study geodesics with exactly this behavior (later on, in \S\ref{sec:separationAndThickness} we will show that this behavior is in fact generic in a certain quantifiable sense).

Our techniques rely on controlling the fraction of time a Teichm\"uller geodesic spends in a given thick part $\T_\epsilon$.
We call this quantity the \emph{thick-stat}; for a nondegenerate geodesic segment $[x,y]\subset \T(S)$ it is denoted by
\[\thickfrac_{\epsilon}[x,y]= \frac{\bigl| \{ 0 \le s \le d_\T(x,y) : y_s \in \T_{\epsilon}\}\bigr|}{d_\T(x,y)},\]
where $y_s$ is the time--$s$ point on the geodesic ray from $x$ through $y$. Thus $[x,y]\subset \T_\epsilon$ is equivalent to $\thickfrac_\epsilon[x,y]=1$. In the following subsections, we will show that negative-curvature properties similar to those mentioned above for thick geodesics also hold when $\thickfrac_\epsilon$ is merely bounded away from zero.

\subsection{Progress in the curve complex}
\label{s:def-prog}

The goal of this subsection is to prove Theorem~\ref{prop:def-prog}, which says that geodesics that spend a definite fraction of their time in the thick part $\T_\epsilon$ must move at a definite linear rate in the curve complex.

The idea is that long subintervals contained in $\T_{\epsilon}$ contribute to progress in $\CC(S)$; alternately, one could consider intervals in the complement of all the $\epsilon$--thin intervals $\I_V^\epsilon$ for proper subsurfaces $V\subsetneq S$. 
For this analysis, we would like to bound the number of connected components of $\bigcup_V \I_V^\epsilon$ in terms of $d_S(x,y)$. One bound is given by the number of nonempty thin intervals. While there may be arbitrarily many such $\I_V^\epsilon$, some of these will be redundant in the sense that $\I_V^\epsilon\subset \I_W^\epsilon$ for some other subsurface $W$.

\begin{definition}[Thin-significance]
Fix $0 < \epsilon\leq \epsilon_0$. A proper subsurface $V\subsetneq S$ is said to be {\em $\epsilon$--thin-significant} (or simply {\em thin-significant})
for the geodesic segment $[x,y]$ if $d_{\CC(V)}(x,y)\geq 3\M_\epsilon$ and for every other proper subsurface $Z\subsetneq S$ with $d_{\CC(Z)}(x,y)\geq 3\M_\epsilon$ we have $\I_V^\epsilon\not\subset \I_Z^\epsilon$.
\end{definition}

\begin{rem*}
In this subsection we will focus on the curve complex distance $d_{\CC(V)}$ for a subsurface $V$. Recall that this agrees with the usual projection distance $d_V$ in the case that $V$ is non-annular, but that $d_{\CC(A)}$ and $d_A$ differ for annuli.
We will take care to handle exceptional annuli carefully.
\end{rem*}

Our first goal is to bound the number of thin-significant subsurfaces along an arbitrary geodesic. For this, we will use the work of Rafi--Schleimer \cite{RS} bounding the size of an antichain in the poset of subsurfaces of $S$.
 
\begin{definition}[Antichain]  Given a subsurface $\Sigma\subset S$ a pair of points $x,y\in\T(S)$ and constants $T_1\geq T_0>0$, a collection $\Omega$ of proper subsurfaces of $\Sigma$ is an \emph{antichain}  for 
$(\Sigma,x,y,T_0,T_1)$
if the following hold:
\begin{itemize}
\item if $Y,Y'\in\Omega$, then $Y$ is not a proper subsurface of $Y'$;
\item if $Y\in \Omega$, then $d_{\CC(Y)}(x,y)\geq T_0$; and 
\item if $Z\subsetneq \Sigma$ and $d_{\CC(Z)}(x,y)\geq T_1$, then $Z\subset Y$ for some $Y\in\Omega$.
\end{itemize}
\end{definition}

\begin{lemma}[Antichain bound {\cite[Lem 7.1]{RS}}]
\label{lem:chain}
For every $\Sigma\subset S$ and sufficiently large $T_1\geq T_0>0$, there is a constant $A=A(\Sigma,T_0,T_1)$ so that if $\Omega$ is an
antichain for $(\Sigma,x,y,T_0,T_1)$ then
\[ \abs{\Omega} \leq A\sdot d_{\CC(\Sigma)}(x,y).\]
\end{lemma}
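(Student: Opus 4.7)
I would follow the Rafi--Schleimer approach of attaching each subsurface in $\Omega$ to a location along a fixed $\CC(\Sigma)$-geodesic $\gamma$ from $\pi_\Sigma(x)$ to $\pi_\Sigma(y)$, then showing these locations are spread out at a uniformly linear rate. The two key external ingredients will be the Masur--Minsky Bounded Geodesic Image (BGI) theorem and Behrstock's inequality for overlapping subsurfaces.

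First I would choose $T_0$ (and therefore $T_1$) large enough to exceed both the BGI constant and the Behrstock constant for the surface $\Sigma$. By BGI, the hypothesis $d_{\CC(Y)}(x,y)\geq T_0$ for each $Y\in\Omega$ implies $\pi_\Sigma(\partial Y)$ lies within a uniform distance $D=D(\Sigma)$ of $\gamma$; so I can assign each $Y\in\Omega$ a vertex $v_Y\in\gamma$ with $d_{\CC(\Sigma)}(v_Y,\pi_\Sigma(\partial Y))\leq D$. This produces a ``shadow map'' $Y\mapsto v_Y$ from $\Omega$ into the finite vertex set of $\gamma$.

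The heart of the argument is to show that each vertex $v\in\gamma$ has at most a uniformly bounded number of preimages. Suppose $Y,Y'\in\Omega$ satisfy $d_{\CC(\Sigma)}(v_Y,v_{Y'})\leq 1$. Because $\Omega$ is an antichain, $Y$ and $Y'$ are not nested, so they either overlap or are disjoint. In the overlap case, I would apply Behrstock's inequality: since $\partial Y'$ projects close to $\partial Y$ in $\CC(\Sigma)$, and hence close to the curve complex of $\Sigma$-geodesic near $\pi_\Sigma(\partial Y)$, one obtains a bound on $\min\{d_{\CC(Y)}(\partial Y',x), d_{\CC(Y)}(\partial Y',y)\}$, which combined with the reverse triangle inequality and the hypothesis $d_{\CC(Y)}(x,y)\geq T_0$ forces $Y=Y'$ once $T_0$ is chosen large enough. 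In the disjoint case, the boundary curves $\partial Y$ for $Y\in\Omega$ sharing a given vertex $v$ form a multicurve on $\Sigma$, and purely topologically there are at most $\xi(\Sigma):=3g(\Sigma)-3+n(\Sigma)$ pairwise disjoint, non-nested essential subsurfaces. Hence each vertex of $\gamma$ accommodates at most $\xi(\Sigma)$ disjoint antichain elements, and by the previous paragraph these account for all preimages.

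Combining these, the shadow map is at most $A'=A'(\Sigma)$-to-one over an interval of length $2D+1$ on $\gamma$, so
\[\abs{\Omega}\ \leq\ A'\sdot\bigl(d_{\CC(\Sigma)}(x,y)+1\bigr)\ \leq\ A\sdot d_{\CC(\Sigma)}(x,y)\]
for an appropriate constant $A=A(\Sigma,T_0,T_1)$, provided $d_{\CC(\Sigma)}(x,y)\geq 1$ (the case $d_{\CC(\Sigma)}(x,y)=0$ being easily handled by topology alone since $\partial Y=\partial Y'$ in $\CC(\Sigma)$ forces few possibilities). I expect the main obstacle to be a careful handling of the Behrstock step in the overlap case, namely verifying that $T_0,T_1$ can be calibrated so that overlapping subsurfaces whose boundaries have nearby $\Sigma$-projections are genuinely forced to coincide; this is where the quantitative form of Behrstock's inequality and the precise constants from BGI must be weighed against one another.
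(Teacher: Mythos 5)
The paper itself gives no proof of this lemma; it is cited directly from Rafi--Schleimer \cite{RS} (their Lemma~7.1), so there is no in-text argument to compare against. Your overall plan --- push $\Omega$ forward to a $\CC(\Sigma)$-geodesic $\gamma$ via Bounded Geodesic Image and then show the resulting shadow map is uniformly bounded-to-one, splitting by whether shadows come from overlapping or disjoint subsurfaces --- is the right shape and is in the spirit of the cited argument. The disjoint case (a bounded, topology-dependent number of pairwise disjoint, non-nested essential subsurfaces) is fine, though the exact bound is not quite $\xi(\Sigma)$ once one accounts for annuli versus complementary pieces; only boundedness is needed.

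The overlap case, however, contains a genuine gap rather than a detail to fill in. Behrstock's inequality bounds $\min\{d_Y(\partial Y', z),\, d_{Y'}(\partial Y, z)\}$ for a single marking or curve $z$ --- a minimum over the \emph{two subsurfaces} --- whereas you write down $\min\{d_{\CC(Y)}(\partial Y',x),\, d_{\CC(Y)}(\partial Y',y)\}$, a minimum over the \emph{two endpoints} with $Y$ fixed; these are not the same statement, and the fact that $\partial Y'$ lies $\CC(\Sigma)$-close to $\gamma$ near the $Y$-footprint does not by itself control any $\CC(Y)$-projection of $\partial Y'$ (proximity in $\CC(\Sigma)$ says nothing about proximity in $\CC(Y)$). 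Moreover the stated conclusion, that the estimate ``forces $Y = Y'$,'' cannot be right: $Y \pitchfork Y'$ already entails $Y\neq Y'$, so what the argument must produce is a \emph{contradiction}, i.e.\ that two transversally-intersecting elements of $\Omega$ cannot shadow to nearby vertices. Making that rigorous requires interleaving Behrstock at $\mu_x$ and $\mu_y$ with BGI applied to the two sub-geodesics of $\gamma$ flanking the $Y$-footprint, together with the coarse monotonicity of $i\mapsto d_Y(x,\gamma(i))$. In particular, the Behrstock alternative $d_{Y'}(\partial Y, \mu_x)\le B$ and $d_Y(\partial Y', \mu_y)\le B$ (which merely says one active interval precedes the other) is consistent on its own and must be ruled out by actually using the shadow-proximity hypothesis; that case analysis is the heart of the lemma and is the part your proposal leaves unaddressed.
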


We now prove a proposition showing that if there are a large enough number of thin-significant subsurfaces 
along a geodesic, then the image of the geodesic makes definite progress in the curve complex. The following notation will be used in the proof.

\begin{definition}
Consider a geodesic segment $[x,y]\subset \T(S)$ and a collection $\Omega$ of proper subsurfaces of $S$. We will consider three partial orders on the set $\Omega$:
\begin{enumerate}
\item $V\leq_1 W \iff V\subset W$,
\item $V\leq_2 W \iff \I_V^\epsilon \subset \I_W^\epsilon$, and 
\item $V\leq_3 W \iff V\subset W$ and $\I_V\subset \I_W$.
\end{enumerate}
The subcollection of $\Omega$ consisting of maximal elements with respect to $\leq_*$ will be denoted $(\Omega)_*$; notice that these sets are related by $(\Omega)_1\subset (\Omega)_3 \supset (\Omega)_2$. Elements of $(\Omega)_1$ are said to be \emph{topologically maximal} with respect to $\Omega$.
\end{definition}

\begin{proposition}[Progress from thin-significant subsurfaces] 
\label{boundmax}
\label{prop:maximal}
For any positive $\epsilon \leq \epsilon_0$ and any $t_0$, there is a constant $N$ such that if $d_{\CC(S)}(x,y)\leq t_0$, then the number of
$\epsilon$--thin-significant subsurfaces along $[x,y]$ is at most $N$.
\end{proposition}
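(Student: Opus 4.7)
The proof is an induction on topological complexity, driven by the Rafi--Schleimer antichain bound (Lemma~\ref{lem:chain}). Set $K = 3\M_\epsilon$ and write $\Omega$ for the set of $\epsilon$--thin-significant subsurfaces along $[x,y]$; let $(\Omega)_1 \subseteq \Omega$ denote its topologically maximal elements.

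First, I would verify that $(\Omega)_1$ satisfies the antichain hypothesis of Lemma~\ref{lem:chain} for parameters $(S, x, y, K, T_1)$, with $T_1$ chosen sufficiently large. The topological incomparability condition and the $d_{\CC(\cdot)} \geq K$ condition are immediate. The covering condition---that every $Z \subsetneq S$ with $d_{\CC(Z)}(x,y) \geq T_1$ is topologically contained in some $Y \in (\Omega)_1$---is more delicate. If $Z$ is itself thin-significant, the conclusion is automatic. Otherwise $\I_Z^\epsilon \subsetneq \I_{Z'}^\epsilon$ for some $Z'$ with $d_{\CC(Z')}(x,y) \geq K$, and since $V \pitchfork W$ forces $\I_V^\epsilon \cap \I_W^\epsilon = \emptyset$ (the fourth bullet of \S\ref{sec:thinness}), the surfaces $Z$ and $Z'$ must be non-transverse; a case analysis then yields $Z \subseteq Z'$, and iterating (terminating by topological finiteness) produces a thin-significant surface containing $Z$. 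Once the antichain property is established, Lemma~\ref{lem:chain} gives $|(\Omega)_1| \leq A \cdot d_{\CC(S)}(x,y) \leq A \cdot t_0$.

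Next, I would count the thin-significant $V$ which are not topologically maximal. Each such $V$ lies strictly inside some $Y \in (\Omega)_1$. The key observation is that $V$ being $\epsilon$--thin-significant in $S$ along $[x,y]$ implies the formally weaker condition that $V$ is $\epsilon$--thin-significant in $Y$ along $[x,y]$, because the defining inequality $\I_V^\epsilon \not\subset \I_Z^\epsilon$ then need only be checked for $Z \subsetneq Y$ (a subcollection of the $Z \subsetneq S$ required for the stronger condition). Since $Y$ has strictly smaller topological complexity than $S$, the inductive hypothesis applied with $S$ replaced by $Y$ bounds the number of such $V$ by a constant $N(Y, \epsilon, t_0^Y)$, provided we can produce an upper bound $t_0^Y$ on $d_{\CC(Y)}(x,y)$.

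The main obstacle will be closing the induction by controlling $t_0^Y$ in terms of $t_0$. The structural constraint $\I_V^\epsilon \not\subset \I_Y^\epsilon$ for thin-significant $V \subsetneq Y$ (automatic since $V$ is $\leq_2$-maximal among subsurfaces with $d_{\CC(\cdot)} \geq K$, and $Y$ is one such) forces the relevant $d_{\CC(V)}$-progress to occur at least partially outside $\I_Y^\epsilon$. On the complement $[x,y] \setminus \I_Y^\epsilon$, the projection $d_{\CC(Y)}$ changes by at most $\M_\epsilon$ on each connected component (the third bullet of \S\ref{sec:thinness}), while the reverse triangle inequality (Lemma~\ref{lem:subsurfacequasi}) controls how the activity in $[x,y]\setminus\I_Y^\epsilon$ is reflected in $d_{\CC(S)}(x,y) \leq t_0$. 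Combining these should bound $d_{\CC(Y)}(x,y)$ by a function of $t_0$ and the topological complexity of $S$, closing the induction and yielding the asserted bound $N = N(\epsilon, t_0)$.
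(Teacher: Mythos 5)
Your high-level strategy---bound the topologically maximal thin-significant subsurfaces by the antichain bound, then induct on complexity for the rest---tracks the paper's, but the inductive step has a genuine gap that you flag and then misdiagnose. To invoke the inductive hypothesis inside a topologically maximal $Y$, you want an upper bound $t_0^Y$ on $d_{\CC(Y)}(x,y)$ expressible in terms of $t_0 \geq d_{\CC(S)}(x,y)$. No such bound exists: a pseudo-Anosov supported on $Y$ makes $d_{\CC(Y)}(x,y)$ arbitrarily large while $d_{\CC(S)}(x,y)$ stays bounded (both Bers markings stay within bounded $\CC(S)$-distance of $\partial Y$). Your claim that the reverse triangle inequality together with the thin-interval structure ``should bound $d_{\CC(Y)}(x,y)$ by a function of $t_0$'' is therefore false, and the induction does not close as written.

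The paper's fix is to change reference points. Fix $W$ topologically maximal and write $\I_W^\epsilon = [a,b]$. Any $\leq_2$-maximal (or, as the paper prefers, $\leq_3$-maximal) $V \subsetneq W$ has $\I_V^\epsilon \not\subset \I_W^\epsilon$, so $\I_V^\epsilon$ either overlaps an endpoint of $\I_W^\epsilon$---the number of such $V$ is bounded universally, since any two of them share a point of their thin intervals and hence cannot transversely intersect---or $\I_V^\epsilon$ lies entirely before $a$ or entirely after $b$. For the latter, the antichain bound is applied with parameters $(W, x, a, 2\M_\epsilon, 4\M_\epsilon)$ (resp.\ $(W, b, y, \ldots)$), and the crucial point is that $d_{\CC(W)}(x,a)\leq \M_\epsilon$ holds automatically by the third bullet of \S\ref{sec:thinness}, since $x$ and $a$ lie in the same complementary component of $\I_W^\epsilon$. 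The antichain size is then controlled by $\M_\epsilon$ alone, independent of $t_0$, and the induction on complexity closes. Without this shift from the pair $(x,y)$ to the pairs $(x,a)$ and $(b,y)$, the recursion cannot terminate. As a side remark, the paper also avoids the delicate covering-condition check in your first step (your claim that $Z$ non-transverse to $Z'$ with $\I_Z^\epsilon \subset \I_{Z'}^\epsilon$ forces $Z\subseteq Z'$ needs care, since disjointness or $Z'\subsetneq Z$ are also possible): it takes $\Omega$ to be \emph{all} proper subsurfaces with $d_{\CC(\cdot)}(x,y)\geq 3\M_\epsilon$, for which the covering condition of the antichain is vacuous, and then bounds the larger set $(\Omega)_3 \supset (\Omega)_2$.
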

\begin{proof}
Let $\Omega = \{V\subsetneq S : d_{\CC(V)}(x,y)\geq 3\M_\epsilon\}$ be the collection of proper subsurfaces which have a large projection. By definition, the set of $\epsilon$--thin-significant subsurfaces is exactly given by $(\Omega)_2$. On the other hand, the subcollection $(\Omega)_1$ of topologically maximal subsurfaces clearly forms an antichain for  $(S,x,y,3\M_\epsilon,3\M_\epsilon)$. By Lemma~\ref{lem:chain}, we therefore have $\abs{(\Omega)_1} \leq At_0$ for some constant $A$. We will extend this to a bound on the cardinality of the larger set $(\Omega)_3$; this will imply the proposition because $(\Omega)_2\subset(\Omega)_3$.

Fix a proper subsurface $W\in \Omega$ and consider the set $\mathcal{U}_W = \{V\in (\Omega)_3 : V\subsetneq W\}$. We claim that $\abs{\mathcal{U}_W}$ is bounded by a constant depending only on the complexity of $W$. By the above, this will suffice because each $V\in(\Omega)_3$ is either equal to or properly contained in some topologically maximal proper subsurface $W\in (\Omega)_1$.

First consider those $V\in\mathcal{U}_W$ for which $\I_V^\epsilon\cap \I_W^\epsilon\neq \emptyset$. The definition of $\leq_3$ implies that $\I_V^\epsilon\not\subset \I_W^\epsilon$; therefore $\I_V^\epsilon$ must overlap with at least one endpoint of $\I_W^\epsilon$. If $\I_{V_1}^\epsilon$ and $\I_{V_2}^\epsilon$ both contain the initial endpoint of $\I_W^\epsilon$, then $\I_{V_1}^\epsilon\cap \I_{V_2}^\epsilon\neq \emptyset$ and so we cannot have $V_1\pitchfork V_2$. Since there is a universal bound on the number of subsurfaces such that no two intersect transversely, this bounds the number of $V\in \mathcal{U}_W$ for which $\I_V^\epsilon\cap \I_W^\epsilon\neq \emptyset$.

It remains to bound the number of $V\in \mathcal{U}_W$ for which $\I_V^\epsilon\cap \I_W^\epsilon = \emptyset$; we will only focus on the case that $\I_V^\epsilon$ occurs before $\I_W^\epsilon$ when traveling from $x$ to $y$. Suppose that $\I_W^\epsilon = [a,b]\subset [x,y]$ and consider the set 
\begin{align*} 
\Omega' &= \{V\in \Omega : V\subsetneq W\text{ and }d_{\CC(V)}(x,a) \geq 2\M_\epsilon\}\\
&\quad \cup \{A \text{ an annulus} : A\subsetneq W,\text{ and } d_{\CC(A)}(x,a)\geq 4\M_\epsilon\}.
\end{align*}
 We claim that the subcollection $(\Omega')_1$ forms an antichain for $(W,x,a,2\M_\epsilon,4\M_\epsilon)$: The only issue  is to check that every $Y\subsetneq W$ with $d_{\CC(Y)}(x,a)\geq 4\M_\epsilon$ is contained in an element of $(\Omega')_1$. If $Y$ satisfies  the reverse triangle inequality then  $d_{\CC(Y)}(x,y)\geq d_{\CC(Y)}(x,a)-\B \geq 3\M_\epsilon$ and therefore  $Y\in \Omega'$. If $Y$ does not satisfy the reverse triangle inequality it is an annulus and automatically $Y\in\Omega'$.  Since $d_{\CC(W)}(x,a)\leq \M_\epsilon$, Lemma~\ref{lem:chain} now gives a bound on $\abs{(\Omega')_1}$. 

Finally, notice that for each $V\in \mathcal{U}_W$ with $\I_V^\epsilon$ occurring before $\I_W^\epsilon$ along $[x,y]$, the triangle inequality gives $d_{\CC(V)}(x,a)\geq d_{\CC(V)}(x,y)-\M_\epsilon \geq 2\M_\epsilon$ and so ensures that $V\in \Omega'$. Therefore each such $V$ is contained in some topologically maximal $Z\in \Omega'$; that is to say, each $V\in \mathcal{U}_W$ with $\I_V^\epsilon$ occurring before $\I_W^\epsilon$ along $[x,y]$ is contained in $\mathcal{U}_Z$ for some $Z\in (\Omega')_1$. The bound on $\abs{\mathcal{U}_W}$ now follows by the bound on $\abs{(\Omega')_1}$ and induction on the complexity of the subsurface $W$. 
\end{proof}

Proposition~\ref{prop:maximal} gives control on the union of the $\epsilon$--thin intervals $\I_V^\epsilon$ for all subsurfaces $V$ with $d_{\CC(V)}(x,y)$ large. However, the potential failure of the reverse triangle inequality enables $[x,y]$ to contain many thin intervals that are not accounted for by Proposition~\ref{prop:maximal}. The following lemma allows us to control these as well by extending the thin intervals so as to have certain large projections 
that will help with our bookkeeping.

\begin{lemma}[Extended thin interval]
\label{lem:extended_thin}
For any $\epsilon \leq \epsilon_0$ and $t_0 > 0$, there exists $M'>0$ with the following property. If a subsurface $W\subsetneq S$ satisfies $d_W(x',y') > M'$ for some pair of points $x',y'\in [x,y]$, then there is a connected interval $[a,b]\subset[x,y]$ containing $\I_W^\epsilon$ and contained entirely in the $\epsilon$--thin part of $\T(S)$ such that either
\begin{itemize}
\item $d_S(a,b) \geq t_0 + 3\B$ (and thus also $d_S(x,y)> t_0$ by Lemma~\ref{lem:subsurfacequasi}), or
\item $[a,b]$ nontrivially intersects the $\epsilon$--thin interval $\I_V^\epsilon\subset [x,y]$ of some subsurface $V\subsetneq S$ satisfying $d_V(x,y)\geq 3\M_\epsilon$.
\end{itemize}
We call such an interval $J=[a,b]$ an \emph{extended $\epsilon$--thin interval for $W$}.
\end{lemma}
\begin{proof}
Let $K$ be the implied constant in the distance formula \eqref{dist} corresponding to the threshold $5\M_\epsilon$. For this $K$ and the given $t_0$, set
\[M' = K^2(t_0 + 3\B) + K^2 + K + 2\M_\epsilon.\]
Now suppose that $W$ determines a thin interval $\I_W^\epsilon\subset[x,y]$ of length $L\geq 0$ (where we allow the possibility that $\I_W^\epsilon=\emptyset$ and $L = 0$). Since the projection to $W$ can change by at most $\M_\epsilon$ outside of $\I_W^\epsilon$, the distance formula implies that
\[d_W(x',y')  \leq 2\M_\epsilon + KL + K\]
for any $x',y'\in [x,y]$. Therefore, the hypothesis $d_W(x',y')\geq M'$ on $W$ ensures that
\[L \geq (M' - 2\M_\epsilon - K)/K \geq K(t_0 + 3\B) + K.\]
In particular $\I_W^\epsilon\neq\emptyset$. Thus we have shown that there exists a nonempty interval $J=[a,b]\subset [x,y]$ (for example, $\I_W^\epsilon$ itself) that 
\begin{enumerate}
\item contains $\I_W^\epsilon$, and
\item is the union of finitely many nonempty thin intervals $\I_V^\epsilon\subset [x,y]$ for proper subsurfaces $V\subsetneq S$ (and so is entirely contained in the $\epsilon$--thin part of $\T(S)$).
\end{enumerate}
We claim that for any such interval $J$ that fails to satisfy the conclusion of the lemma, there exists a \emph{strictly} larger subinterval $J' \supsetneq J$ that again satisfies (1)--(2). As there are only finitely many subintervals satisfying (2) (since at most finitely many curves become shorter than $\epsilon$ along the compact segment $[x,y]$), repeated applications of the claim will eventually produce the desired subinterval.

To prove the claim, we may suppose that $J=[a,b]$ satisfies (1)--(2) above and fails to meet the conclusion of the lemma. In that case, there necessarily exists a nonempty collection $\Omega$ of subsurfaces $V\subsetneq S$ for which $d_{V}(a,b)\geq 5\M_\epsilon$, for otherwise the distance formula (applied with threshold $5\M_\epsilon$) would give 
\[d_\T(a,b) \leq K d_S(a,b) + K < K(t_0 + 3\B) +K \leq L,\]
contradicting the assumption $[a,b]\supset \I_W^\epsilon$ (recall that $L$ is the length of $\I_W^\epsilon$).

Note that the property $d_V(a,b) \geq 5\M_\epsilon$ implies that the thin interval $\I_V^\epsilon\subset[x,y]$ of each $V\in \Omega$ nontrivially intersects $[a,b]$. If any such interval $\I_V^\epsilon$ were completely contained within $[a,b]$ then, since the projection to $V$ can move at most $\M_\epsilon$ outside of $\I_V^\epsilon$, the triangle inequality would give $d_V(x,y)\geq 3\M_\epsilon$. As this is evidently not the case (since $J$ fails to satisfy the conclusion of the lemma), it must be that the thin interval $\I_V^\epsilon$ of each $V\in \Omega$ nontrivially intersects the complement of $[a,b]$ as well. Therefore, 
\[J' := [a,b]\cup \bigcup_{V\in \Omega} \I_V^\epsilon\]
is a connected subinterval of $[x,y]$ that properly contains $J$ and again satisfies (1)--(2). Thus the claim holds and so the lemma is verified.
\end{proof}

By the distance formula (\ref{dist}), any long interval disjoint from all thin intervals along $[x,y]$ must travel a large distance in the curve complex $\CC(S)$ of the whole surface. The following lemma says that each such subinterval contributes to the curve complex distance along the total geodesic.

\begin{lemma}[Cumulative contribution of subintervals]
\label{lem:bigSubintervalsAdd}
There exist constants $0<\rho_1<1$ and $D_1>0$ such that for all $d>D_1$, 
 if $[x,y]$ is a Teichm\"uller geodesic that contains $n$ subintervals $[x_i,y_i]$ with disjoint interiors whose endpoints 
 satisfy $d_S(x_i,y_i)\geq d$, then 
$$d_S(x,y)\geq \rho_1 nd.$$
\end{lemma}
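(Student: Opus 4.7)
The plan is to derive the lemma from a single iterated application of the reverse triangle inequality (Lemma~\ref{lem:subsurfacequasi}) for the whole surface $V=S$. Since the subintervals $[x_i,y_i]$ have disjoint interiors, I may order them so that
$$x \leq x_1 \leq y_1 \leq x_2 \leq y_2 \leq \cdots \leq x_n \leq y_n \leq y$$
along the geodesic $[x,y]$, giving $2n$ intermediate points in all.

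The main step is a routine induction on the number of intermediate points: inserting one extra point between two consecutive ones costs at most an additional $\B$ by Lemma~\ref{lem:subsurfacequasi}, so after $2n$ insertions I obtain
$$d_S(x,x_1)+d_S(x_1,y_1)+d_S(y_1,x_2)+\cdots+d_S(x_n,y_n)+d_S(y_n,y) \;\leq\; d_S(x,y)+2n\B.$$
Discarding the non-negative ``gap'' terms $d_S(x,x_1)$, $d_S(y_i,x_{i+1})$, and $d_S(y_n,y)$ from the left-hand side yields $\sum_{i=1}^n d_S(x_i,y_i)\leq d_S(x,y)+2n\B$. Combined with the hypothesis $d_S(x_i,y_i)\geq d$ for each $i$, this rearranges to $d_S(x,y)\geq n(d-2\B)$.

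To finish, I set $D_1:=4\B$ and $\rho_1:=1/2$. Then for any $d>D_1$ we have $d-2\B>d/2$, so $d_S(x,y)\geq \rho_1 n d$ as claimed. I do not anticipate a genuine obstacle here: morally the statement is just a quantitative expression of the fact that the $\CC(S)$-shadow of a Teichm\"uller geodesic is an unparameterized quasi-geodesic, and the reverse triangle inequality is precisely the tool which packages this for the whole surface. The mild subtlety that Lemma~\ref{lem:subsurfacequasi} can fail on annular subsurfaces does not intrude, since both hypothesis and conclusion are phrased purely in terms of $d_S$.
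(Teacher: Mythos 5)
Your proof is correct and follows essentially the same route as the paper: both arguments iterate the reverse triangle inequality for $d_S$ at the $2n$ intermediate points to obtain $\sum_i d_S(x_i,y_i) \leq d_S(x,y) + 2n\B$, discard the gap terms, and then make the same choices $D_1 = 4\B$, $\rho_1 = 1/2$. The only cosmetic difference is that the paper phrases the iteration as a recursion on outer subintervals rather than as successive insertions, which yields the identical bound.
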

\begin{proof}
Applying the reverse triangle inequality (\ref{eqn:reverseTriangle}) to the points $x_i$ and $y_i$ we have $d_S(x,x_i) + d_S(x_i,y_i)+d_S(y_i,y) \leq d_S(x,y)+2\B$. By recursively applying this observation to $[x,x_i]$ and $[y_i,y]$ and then throwing out the complementary intervals, we find that
\[d_S(x,y)\geq \sum d_S(x_i,y_i)-2n\B\geq nd-2n\B.\]
Choose $D_1>4\B$ and $\rho_1=1/2$.  Then for $d\geq D_1$ the quantity on the right side is at least 
$\rho_1 nd$. 
\end{proof}

We now fix once and for all a ``definite progress'' constant $\D>0$, sufficiently large so that $\rho_1 \D>D_1$ (and thus $\D>D_1$ as well), and make the following definition.

\begin{definition}
\label{def:primary_thin} 
For any $\epsilon \leq \epsilon_0$, set $t_0=\rho_1 \D$ and let $M' = M'(\epsilon)$ be the corresponding constant provided by Lemma~\ref{lem:extended_thin}. Then define the \emph{primary $\epsilon$--thin portion} $\mathcal{W}_\epsilon$ of a geodesic segment $[x,y]$ to be the union of $\epsilon$--thin intervals $\I_V^\epsilon\subset[x,y]$ for all proper subsurfaces with $d_{\CC(V)}(x,y)\geq 3\M_\epsilon$ together with an extended $\epsilon$--thin interval 
$J_W^\epsilon\subset[x,y]$ for any proper subsurface $W$ satisfying $d_W(x',y')\geq M'$ for some pair of points $x',y'\in [x,y]$.
\end{definition}

\begin{lemma}[Primary thin portion]
\label{lem:primary_thin}
The primary $\epsilon$--thin portion $\mathcal{W}_\epsilon$ is completely contained in the $\epsilon$--thin part of $\T$. Furthermore, if $d_S(x,y)\leq \rho_1 \D$, then the number of connected components of $\mathcal{W}_\epsilon$ is bounded by a constant $N'$ depending only on $\epsilon$.
\end{lemma}
\begin{proof}
The first assertion is immediate since $\mathcal{W}_\epsilon$ is a union of $\epsilon$--thin intervals. For the second assertion, note that the union $\mathcal{W}'$ of $\epsilon$--thin intervals $\I_V^\epsilon$ for all proper 
subsurfaces with $d_{\CC(V)}(x,y) \geq 3\M_\epsilon$ has a bounded number of connected components by Proposition~\ref{prop:maximal} (since passing to thin-significant subsurfaces does not change the union).  
Consider now an extended $\epsilon$--thin interval $J_W^\epsilon$ contributing to $\mathcal{W}_\epsilon$. Since $d_S(x,y)\leq \rho_1\D = t_0$, Lemma~\ref{lem:extended_thin} implies that $J_W^\epsilon$ intersects 
$\I_V^\epsilon$ for 
some subsurface $V$ with $d_V(x,y)\geq 3\M_\epsilon$. We claim that either $d_{\CC(V)}(x,y)\geq 3\M_\epsilon$, so that $\I_V^\epsilon \subset \mathcal{W}'$ and thus 
$J_W^\epsilon\cap \mathcal{W}'\neq \emptyset$, or else $V$ is an annulus 
with $\min(l_x(\partial V),l_y(\partial V))< \epsilon_0$. Since there can be at most $6g-6$ such annuli and $\mathcal{W}'$ has a bounded number of components, this will suffice. 

If $V$ is non-annular, then $d_{\CC(V)} = d_V$ and the claim is immediate. Otherwise $V$ is an annulus with $d_V(x,y)\geq 3\M_\epsilon$. Since  $\M_\epsilon$ may be assumed large enough to satisfy the universal condition 
$\M_\epsilon \geq 36\log_+(1/\epsilon_0)+ 6$, Lemma~\ref{lem:twist+lenVShyp} implies that either $\min(l_x(\partial V),l_y(\partial V)) < \epsilon_0$ or else $d_{\CC(V)}(x,y) \geq e^{\M_\epsilon/2} \geq 3\M_\epsilon$, as claimed.
\end{proof}

For any interval $[a,b]\subset [x,y]\setminus \mathcal{W}_{\epsilon}$ in the complement of the primary $\epsilon$--thin portion, the construction of $\mathcal{W}_\epsilon$ ensures that $d_W(a,b)\leq M'$ for all proper subsurfaces $W$ of $S$. Applying the distance formula (\ref{dist}) with $M' = M'(\epsilon)$ as the threshold, we now see that $d_\T(a,b)\lboth_\epsilon d_S(a,b)$ for any connected interval $[a,b]\subset [x,y]\setminus\mathcal{W}_\epsilon$. This gives rise to a fixed value $\L_\epsilon$ such that any interval $[a,b]$ of length at least $\L_\epsilon$ that lies entirely 
in  $[x,y]\setminus\mathcal{W}_\epsilon$ satisfies $d_S(a,b)\geq \D$. Thus according to Lemma~\ref{lem:bigSubintervalsAdd}, if $I$ is any interval along a geodesic that contains 
a subinterval of length $\L_\epsilon$ that is disjoint from $\mathcal{W}_\epsilon$, then the distance in the curve complex between the endpoints of $I$ is at least $\rho_1\D$. We now come to the main result of this subsection.

\begin{theorem}[Definite progress]\label{prop:def-prog}
For every $\epsilon > 0$ and $0 < \theta < 1$, there exists a constant $R_1>0$ such that 
\[d_S(x,y) \gmul_{\epsilon,\theta} d_\T(x,y)\]
for every Teichm\"uller geodesics $[x,y]$ satisfying $d_\T(x,y)\geq R_1$ and $\thickfrac_\epsilon[x,y]\geq \theta$. 
\end{theorem}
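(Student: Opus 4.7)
The plan is to combine Proposition~\ref{prop:maximal} and Lemma~\ref{lem:bigSubintervalsAdd} in a two-level averaging argument along $[x,y]$. I will first establish a \emph{constant} lower bound on $d_S$ for any sufficiently long geodesic whose thick-stat is bounded below, and then upgrade that to the desired linear bound by subdividing $[x,y]$ into blocks of a uniformly bounded length and applying the constant bound to a definite proportion of the blocks.

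The auxiliary claim is: for each $\epsilon' > 0$ and $\theta' > 0$, there is $R_0 = R_0(\epsilon',\theta')$ so that $d_\T(x', y') \geq R_0$ and $\thickfrac_{\epsilon'}[x', y'] \geq \theta'$ imply $d_S(x', y') \geq \rho_1 \D$. I will prove it by contradiction. If $d_S(x', y') < \rho_1 \D$, then Proposition~\ref{prop:maximal} applied with $t_0 := \rho_1 \D$ bounds the number of $\epsilon'$--thin-significant subsurfaces by some $N = N(\epsilon', \theta')$, so the primary thin portion $\mathcal{W}_{\epsilon'}$ consists of at most $N + h$ components (the extra $h$ accounting for annuli with exceptional thin intervals). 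The key observation is that every point of $\T_{\epsilon'}\cap[x',y']$ carries no curve of length $\le \epsilon'$ and therefore lies outside every thin interval $\I_V^{\epsilon'}$, hence outside $\mathcal{W}_{\epsilon'}$. Consequently, $[x',y']\setminus\mathcal{W}_{\epsilon'}$ has measure at least $\theta' d_\T(x',y')$ and splits into at most $N+h+1$ components; pigeonholing, with $R_0 := \L_{\epsilon'}(N+h+1)/\theta'$, produces a component $[a,b]$ of length at least $\L_{\epsilon'}$. By the choice of $\L_{\epsilon'}$ preceding the theorem, this forces $d_S(a,b) \geq \D$, and Lemma~\ref{lem:bigSubintervalsAdd} applied with $n=1$ gives $d_S(x',y') \geq \rho_1 \D$, contradicting the assumption.

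To finish, set $\theta' := \theta/4$ and $L := R_0(\epsilon, \theta/4)$, partition $[x,y]$ into $m = \lfloor d_\T(x,y)/L \rfloor$ consecutive blocks of length $L$ (plus a shorter tail), and observe that the sum of the block-level thick-stats is at least $\theta m - 1$. A straightforward averaging argument then shows that, once $d_\T(x,y) \gmul_\theta L$, at least $\theta m/4$ of the blocks are ``good,'' meaning their thick-stat is at least $\theta/4$. The auxiliary claim applies to each good block and gives $d_S \geq \rho_1 \D$ between its endpoints. Since $\rho_1 \D > D_1$ by the definition of $\D$, Lemma~\ref{lem:bigSubintervalsAdd} applied to these disjoint good blocks yields $d_S(x,y) \geq \rho_1 \cdot (\theta m/4) \cdot \rho_1 \D \gmul_{\epsilon,\theta} d_\T(x,y)$ for $d_\T(x,y)$ sufficiently large, which is the sought-after bound. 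The heart of the argument is the auxiliary claim, and within it the delicate point is confirming that thick points genuinely avoid $\mathcal{W}_\epsilon$ -- everything else is pigeonhole applied to the already-developed machinery for the complement of the primary thin portion.
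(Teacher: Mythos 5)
Your proposal is correct and follows essentially the same path as the paper's proof: both arguments hinge on (i) Proposition~\ref{prop:maximal} plus the thin-interval pigeonhole (via the constant $\L_\epsilon$) to show that a subinterval with small $d_S$ and substantial length cannot have high thick-stat, and (ii) Lemma~\ref{lem:bigSubintervalsAdd} applied to a linearly many such subintervals produced by a block-subdivision and averaging step. The only difference is organizational: you extract the stalled-implies-not-thick observation as a standalone auxiliary claim (stated in the contrapositive) and then feed it back through Lemma~\ref{lem:bigSubintervalsAdd}, whereas the paper runs the same reasoning inline within a single averaging argument; the underlying machinery and estimates are the same, and your constants ($\theta' = \theta/4$) are a mild simplification of theirs.
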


\begin{proof}
Since shrinking $\epsilon$ preserves the hypothesis $\thickfrac_\epsilon[x,y]\geq \theta$, we may assume $\epsilon\leq \epsilon_0$. Let $N' = N'(\epsilon)$ denote the constant obtained from Lemma~\ref{lem:primary_thin}. Choose $n$ so that $n\theta > 1$ and make the following definitions:
\[\begin{array}{cccc}
\displaystyle\theta'= \frac{n\theta-1}{n-1}, & \displaystyle T_0\geq\frac{\L_\epsilon(N'+1)}{\theta'}, & \displaystyle R_1 = 2T_0,
& \displaystyle \rho=\frac{\rho_1^2 \D}{2nT_0}.
\end{array}\]
Let $[x,y]$ be a Teichm\"uller geodesic of length 
$r\geq R_1$ satisfying $\thickfrac_\epsilon[x,y]\geq \theta$.
Set $m = \floor{r/T_0}$ and divide $[x,y]$ into $m$ subsegments of length $r/m\geq T_0$. 
Let us say that a subsegment $[a,b]\subset [x,y]$ is {\em stalled} if $d_S(a,b) < \rho_1 \D$ and 
{\em progressing} if 
$d_S(a,b)\ge \rho_1 \D$.  Suppose that $m_1$ of the subsegments are stalled, and thus $m_2=m-m_1$ are progressing.

Given a stalled segment $[a,b]$, we decompose it into its primary $\epsilon$--thin portion $\mathcal{W}_\epsilon$ and note that, since it is stalled, Lemma~\ref{lem:primary_thin} ensures $\mathcal{W}_\epsilon$ has at most $N'$ connected components. Therefore we conclude that $\mathcal{W}_\epsilon$ has at most $N'+1$ complementary subintervals in $[a,b]$. Furthermore, each complementary subinterval has length at most $\L_\epsilon$, for otherwise we would have $d_S(a,b)\geq \rho_1\D$ by the paragraph preceeding Theorem~\ref{prop:def-prog}. Since $\mathcal{W}_\epsilon$ is contained in the $\epsilon$--thin part, we see that the total amount of time that this interval $[a,b]$ spends in the thick part is at most 
$$(N'+1)\L_\epsilon\leq \theta' T_0 \leq \theta' r/m.$$ 
Therefore the total amount of time that the full interval $[x,y]$ spends in the thick part is at most
$$\left(\theta' \frac rm \right)m_1 + \left(\frac rm \right)m_2 = \frac rm (\theta'm_1+m_2).$$ 

We claim that $m_2\geq m/n$. If this were not the case, then we necessarily have $m_1 > (n-1)m/n$. Since $\theta'<1$, it follows that 
\[\theta' \sdot m_1 + 1\sdot m_2 < \theta' \sdot m\frac{n-1}n + 1\sdot m \frac 1n,\]
where the inequality is valid by 
the elementary fact that for any constants $a,b,c,d,\alpha,\beta$ such that  $a+b=c+d$ and $0<\alpha<\beta$
we have 
\begin{equation}
\label{eq:shifting} \alpha \sdot a + \beta\sdot  b < \alpha\sdot c + \beta\sdot d \iff a>c.
\end{equation}
But then the amount of time that $[x,y]$ is thick is less than 
$$\frac rm \left(\theta' m \frac{n-1}{n} + m\frac{1}{n}\right)
=  r\left(\frac{n\theta-1}{n-1}\cdot \frac{n-1}{n} + \frac{1}{n}\right)
=  r\theta,$$
which contradicts the assumption on $[x,y]$. Therefore $m_2 \geq m/n$, as claimed.

On each of the $m_2$ progressing intervals, 
the curve complex distance between endpoints is at least $\rho_1\D$. 
Therefore, cumulative contribution of subintervals (Lemma~\ref{lem:bigSubintervalsAdd}) implies  that
\[d_S(x,y) \geq \rho_1  m_2 (\rho_1\D) 
\geq \rho_1^2\D \frac mn \geq \frac{\rho_1^2 \D}n \left(\frac{r}{T_0} -1\right) \geq \frac{\rho_1^2 \D}{2nT_0}r=\rho r.\qedhere\]
\end{proof}

\begin{rem}\label{hamenstadt}
After developing our proof of Theorem~\ref{prop:def-prog} we learned of an independent yet closely related result of Hamenst\"adt's, namely Proposition 2.1 of \cite{Ham}, which under the same hypotheses provides a lower bound on $d_S(x,y)$ that is constant rather than linear in $d_\T(x,y)$. In fact, the linear bound in Theorem~\ref{prop:def-prog} may be deduced from Hamenst\"adt's result by breaking $[x,y]$ into subintervals, applying \cite[Proposition 2.1]{Ham} to those with large thick-stat, and adding the resulting contributions using Lemma~\ref{lem:bigSubintervalsAdd}, much as we have done above. With this approach \cite[Proposition 2.1]{Ham} would effectively replace the use of Proposition~\ref{prop:maximal} and Lemma~\ref{lem:extended_thin} in our argument. However, we have decided to retain our original argument using Proposition~\ref{prop:maximal} and Lemma~\ref{lem:extended_thin} as we believe these to be of independent interest.
\end{rem}

\subsection{A statistical thin triangles statement}
\label{s:partially_thick}
In this subsection we prove Theorem~\ref{thm:partiallyThickIntervalsAreClose} and obtain thinness results for geodesic triangles whose sides satisfy various thick-stat conditions. Recall that given $\epsilon > 0$ there is a $\delta> 0$ such that every geodesic triangle whose sides live entirely in $\T_\epsilon$ is $\delta$--thin. This fact can be deduced from the following theorem of Rafi, which gives specific information under much more general conditions.

\begin{theorem}[Rafi {\cite[Theorem 8.1]{Ra}}]
\label{thm:RafiThinTriangles}
For every $\epsilon>0$ there exist  constants $C_1,L_1$ such that
if $I\subset [x,y]\subset \T(S)$ is a geodesic subinterval of length at least $L_1$ 
lying entirely in the $\epsilon$--thick part,
then for all $z\in\T(S)$, we have 
\[ I \cap \Nbhd_{C_1}([x,z]\cup[y,z]) \neq \emptyset. \]
\end{theorem}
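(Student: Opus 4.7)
The strategy is to exploit the fact that on the $\epsilon$--thick part $\T_\epsilon$ the Teichm\"uller metric is bi-Lipschitz equivalent to the curve complex metric, reducing the theorem to the thinness of quasi-geodesic triangles in the hyperbolic space $\CC(S)$ and then transferring back via Proposition~\ref{prop:repackage} and the reverse triangle inequalities.

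For the first step, since $I \subset \T_\epsilon$ no proper subsurface $Y$ has a thin interval $\I_Y^\epsilon$ along any subsegment $[a,b] \subset I$ (otherwise some point of $[a,b]$ would have $\partial Y$ short). Thus $d_Y(a,b) \leq \M_\epsilon$ for every proper $Y$, and Proposition~\ref{prop:repackage} gives $d_\T(a,b) \eboth_\epsilon d_S(a,b)$. Hence $\pi_S|_I$ is quasi-isometric onto its image, and $\pi_S(I)$ has $\CC(S)$--length linear in $|I|$. By Masur--Minsky, the $\CC(S)$--shadows of $[x,y], [x,z], [y,z]$ are uniform quasi-geodesics forming a $\tau$--thin triangle. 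For $|I| \geq L_1$ sufficiently large, a central portion of $\pi_S(I)$ lies within $\tau$ of $\pi_S([x,z]) \cup \pi_S([y,z])$, yielding a point $p \in I$ with $\pi_S(p)$ within $\tau$ of $\pi_S(q)$ for some $q$ on, say, $[x,z]$.

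For the second step, I would bound $d_\T(p,q)$ using Proposition~\ref{prop:repackage} again:
\[ d_\T(p,q) \eboth d_S(p,q) + \sum_Y \thresh{d_Y(p,q)}_{M_0}, \]
with the first term already bounded by $\tau + O(1)$. For each proper subsurface $Y$ contributing a large summand, the thickness $p \in \T_\epsilon$ forces $p \notin \I_Y^\epsilon$ along $[x,y]$; the reverse triangle inequality (Lemma~\ref{lem:subsurfacequasi}, or Theorem~\ref{thm:annulusquasi} in the annular exception case) then pins $\pi_Y(p)$ within $\M_\epsilon$ of either $\pi_Y(x)$ or $\pi_Y(y)$, depending on which side of $\I_Y^\epsilon$ contains $p$. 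A parallel analysis for $q$ along $[x,z]$, together with the antichain bound (Lemma~\ref{lem:chain}) controlling the number of subsurfaces contributing simultaneously, would produce the desired constant bound $d_\T(p,q) \leq C_1(\epsilon)$.

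The principal obstacle is the subcase in which $q$ falls into a thin part of $[x,z]$: a short curve $\alpha$ at $q$ pushes the annular projection of $q$ deep into the horoball $\H_\alpha$, while $p \in \T_\epsilon$ keeps its projection near the boundary of $\H_\alpha$, making $d_{A_\alpha}(p,q)$ large despite the $\CC(S)$--proximity. The fix is to refine the choice of $q$---sliding along $[x,z]$ within the $\CC(S)$--fellow-traveling segment into a thick part if one exists, and otherwise switching to the third side $[y,z]$---exploiting that a short $\alpha$ appears as a vertex of the Bers marking so that its location is constrained by the $\CC(S)$--closeness of $\pi_S(p)$ and $\pi_S(q)$. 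Executing this exchange uniformly across all subsurfaces simultaneously is the delicate combinatorial core of the proof.
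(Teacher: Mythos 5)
This statement is quoted in the paper as Rafi's Theorem 8.1 of \cite{Ra} and is not reproved there; however, the paper's Theorem~\ref{thm:partiallyThickIntervalsAreClose} is a generalization (replacing ``$I$ entirely in $\T_\epsilon$'' by ``proportion $\theta$ of $I$ is $\epsilon$--thick''), and its proof specializes to a proof of the present statement, so it is the natural point of comparison.

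Your skeleton matches the paper's: reduce to a bounded-length subinterval, use Theorem~\ref{prop:def-prog} to get definite progress in $\CC(S)$, pick a central point $w\in I$ whose $\CC(S)$--shadow is $\tau$--close to (WLOG) $\pi_S([x,z])$ by hyperbolicity of quasi-geodesic triangles, choose $u\in[x,z]$ realizing that proximity, then bound $d_\T(w,u)$ via the distance formula by bounding $d_V(w,u)$ for every proper $V$. But there is a genuine gap in your step that carries out the last bound, and it is deeper than the annular difficulty you flag. Your ``parallel analysis for $q$'' (your $u$) along $[x,z]$ needs an a priori bound on $d_V(x,z)$ before the reverse triangle inequality along $[x,z]$ can pin $\pi_V(u)$ anywhere: knowing $u\notin\I_V^\epsilon$ (or not knowing this, if $u$ happens to be thin in $V$) is useless without knowing $d_V(x,z)\lboth 1$. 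You never establish this, and without it the non-annular case already fails, not just the annular one. Your proposed fix of ``sliding $q$'' along $[x,z]$ or switching to the third side does not address this: moving $q$ does not change $d_V(x,z)$.

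The paper's key ingredient, which you are missing, is Claim~\ref{claim:side_projection_bounds}: for every $V$ with $\partial V$ within $2\tau+3$ of $\mu_w$ in $\CC(S)$, all three quantities $d_V(x,y)$, $d_V(y,z)$, $d_V(x,z)$ are bounded by a constant $M_0(\epsilon,\theta)$. The mechanism is that (i) $\mu_w$ is chosen far from $\pi_S([y,z])$, so $V$ cannot become thin along $[y,z]$ and hence $d_V(y,z)\leq\M$; (ii) $w$ is chosen near the \emph{center} of $\pi_S(I)$, far from the shadow of $[x,y]\setminus I$, so if $V$ becomes thin along $[x,y]$ then $\I_V\subset I$, which has length at most $2L$, giving $d_V(x,y)\lboth_L 1$; and (iii) triangle inequality for $d_V(x,z)$. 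With this Claim in hand, for non-annular $V$ the reverse triangle inequality along $[x,y]$ and $[x,z]$ pins $\pi_V(w)$ and $\pi_V(u)$ near $\pi_V(x)$. For annular $V$ the paper invokes Theorem~\ref{thm:annulusquasi}, and the potential witnesses $W_j$ to the failure of RTI are disjoint from $V$, hence their boundaries also lie near $\mu_w$, so Claim~\ref{claim:side_projection_bounds} applies to them and gives the contradiction. This handles exactly the horoball-depth problem you worry about, without any repositioning of $u$. Incidentally, the antichain bound (Lemma~\ref{lem:chain}) you invoke is not needed here: once each $d_V(w,u)$ is shown to be below the threshold, the sum in the distance formula vanishes term-by-term, so no counting of contributing subsurfaces is required.
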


We weaken the hypothesis  to only require a definite thick-stat.

\begin{theorem:triangles}
For any $\epsilon>0$ and $0<\theta \leq 1$, there exist constants $C,L$ such that 
if $I\subset[x,y]\subset \T(S)$ is a geodesic subinterval of length at least $L$ 
and at least proportion $\theta$ of $I$  is $\epsilon$--thick,
then for all $z\in\T(S)$, we have 
\[ I \cap \Nbhd_C([x,z]\cup[y,z]) \neq \emptyset. \]
\end{theorem:triangles}

Before proving this result, we discuss two consequences. Firstly we observe that there is not merely one point in the subinterval $I$ which is close to $[x,z]\cup[y,z]$, but in fact this conclusion holds for a large fraction of the interval $I$.

\begin{proposition}
For any $\epsilon > 0$ and $0 < \theta' <\theta \leq 1$,
 there are constants $L', C'$ so that if a side $[x,y]$ of a geodesic triangle $\triangle(x,y,z)\subset \T(S)$ contains a subinterval $I\subset [x,y]$ of length at least $L'$ with $\thickfrac_\epsilon(I)\geq \theta$, then at least proportion $\theta'$ of $I$ is within distance $C'$ of $[x,z]\cup[y,z]$. That is, if $\len$ denotes Lebesgue measure along 
 a geodesic segment, 
\[   \len\left(\{ I \cap \Nbhd_{C'}([x,z]\cup[y,z]) \}\right) \ge \theta'\cdot \len(I).
\]
\end{proposition}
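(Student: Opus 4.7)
The plan is to reduce to Theorem~\ref{thm:partiallyThickIntervalsAreClose} applied to many sub-subintervals of $I$, combined by an averaging argument. The key elementary observation is a triangle inequality trick: if a subinterval $J\subset I$ contains \emph{some} point within distance $C_0$ of $[x,z]\cup[y,z]$, then \emph{every} point of $J$ lies within distance $C_0+\len(J)$ of $[x,z]\cup[y,z]$. Thus a single close point can be ``fattened'' into an entire close subinterval at the cost of enlarging the constant by $\len(J)$.

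Fix an intermediate value $\theta_*$ with $\theta'<\theta_*<\theta$ (say $\theta_*=(\theta+\theta')/2$) and define the auxiliary threshold $\theta'':=(\theta-\theta_*)/(1-\theta_*)$, which is positive and is calibrated so that $(\theta-\theta'')/(1-\theta'')=\theta_*$. Apply Theorem~\ref{thm:partiallyThickIntervalsAreClose} with parameters $\epsilon$ and $\theta''$ to obtain constants $C_0$ and $L_0$, and set $C':=C_0+L_0$. Given $I$ satisfying the hypothesis, partition it into $n:=\floor{\len(I)/L_0}$ consecutive subintervals $I_1,\ldots,I_n$ each of length exactly $L_0$, together with a trailing remainder of length less than $L_0$. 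Call $I_k$ \emph{heavy} if $\thickfrac_\epsilon(I_k)\ge\theta''$. Theorem~\ref{thm:partiallyThickIntervalsAreClose} then tells us that each heavy $I_k$ meets $\Nbhd_{C_0}([x,z]\cup[y,z])$, and the fattening observation promotes this to $I_k\subset\Nbhd_{C'}([x,z]\cup[y,z])$.

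To count the number $m$ of heavy subintervals, bound the total $\epsilon$-thick measure of $I$ in two ways. Using $\thickfrac_\epsilon(I_k)\le 1$ for heavy $I_k$ and $\thickfrac_\epsilon(I_k)<\theta''$ for non-heavy $I_k$, and noting that the trailing remainder contributes at most $L_0$ to the thick measure,
\[ \theta\len(I)\;\le\;\sum_{k=1}^n \thickfrac_\epsilon(I_k)\,L_0+L_0\;\le\; mL_0+(n-m)\theta''L_0+L_0. \]
Combining this with $nL_0\le\len(I)$ and the identity $(\theta-\theta'')/(1-\theta'')=\theta_*$ rearranges to $mL_0\ge\theta_*\len(I)-L_0/(1-\theta'')$. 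Since the heavy intervals all lie in $\Nbhd_{C'}([x,z]\cup[y,z])$, the length of $I\cap\Nbhd_{C'}([x,z]\cup[y,z])$ is at least $mL_0$, giving proportion at least $\theta_*-L_0/[(1-\theta'')\len(I)]$. This exceeds $\theta'$ as soon as $\len(I)\ge L':=L_0/[(1-\theta'')(\theta_*-\theta')]$, completing the argument.

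The only conceptually nontrivial input is the calibration of the auxiliary threshold $\theta''$, chosen so that the averaging inequality comes out with a leading constant $\theta_*$ strictly between $\theta'$ and $\theta$, leaving a gap that lets the error term $L_0/(1-\theta'')$ be absorbed by taking $L'$ sufficiently large. Once this choice is made, the fattening observation and a single application of Theorem~\ref{thm:partiallyThickIntervalsAreClose} to each subinterval yield the result, so I do not anticipate any serious obstacle beyond careful tracking of the constants.
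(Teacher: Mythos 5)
Your proof is correct and follows essentially the same strategy as the paper: partition $I$ into pieces of length comparable to $L$, apply Theorem~\ref{thm:partiallyThickIntervalsAreClose} to each piece, and count the heavy pieces by an averaging argument, fattening a single close point in a piece into containment of the whole piece in a slightly larger neighborhood. The only cosmetic difference is that the paper divides $I$ into $n=\floor{\len(I)/L}$ subintervals of \emph{equal} length (lying in $[L,2L)$, with no remainder), which lets it calibrate $\rho=(\theta-\theta')/(1-\theta')$ and conclude $a>\theta'$ directly, whereas your fixed-length pieces plus a trailing remainder force you to introduce the intermediate slack $\theta_*$ and absorb the remainder term by taking $L'$ large enough.
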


\begin{proof}
Let $\rho = \frac{\theta-\theta'}{1-\theta'}$. Apply Theorem~\ref{thm:partiallyThickIntervalsAreClose} to $\T_\epsilon$ with the fraction $\rho$ and let $L'=L$ and $C$ be the corresponding constants. Given a subinterval $I\subset [x,y]$ satisfying the hypotheses of the theorem, divide $I$ into $n = \floor{\len(I)/L}$ subintervals of equal length (the length will be between $L$ and $2L$). Let $a$ denote the fraction of these subintervals that have $\thickfrac_\epsilon \geq \rho$ (so $a = \frac{k}{n}$ for some $k\in \{0,\dotsc,n\}$). Each of these $na$ subintervals can spend at most all of their time in $\T_\epsilon$ and the other $n(1-a)$ subintervals spend less than proportion $\rho$ of their time in $\T_\epsilon$. Therefore the maximum amount of time the whole interval $I$ can spend in $\T_\epsilon$ is less than
\[1\cdot na\cdot\frac{\len(I)}{n} + \rho\cdot n(1-a)\cdot\frac{\len(I)}{n} = \len(I)(a+\rho-\rho a).\]
Since we have $\thickfrac_\epsilon(I)\geq \theta$ by hypotheses, this implies $\theta < a+\rho-a\rho$. That is,
\[a > \frac{\theta-\rho}{1-\rho} = \theta'.\]
Now, Theorem~\ref{thm:partiallyThickIntervalsAreClose} implies that each subinterval with $\thickfrac_\epsilon\geq \rho$ contains a point within distance $C$ of $[x,z]\cup[y,z]$.
Therefore, each of the $na$ subintervals satisfying $\thickfrac_\epsilon\geq \rho$ is contained entirely within the $C'=C+2L$ neighborhood of $[x,z]\cup[y,z]$. As  the union of these $na$ subintervals comprise 
proportion $a > \theta'$  of the interval $I$, the statement follows.
\end{proof}

From this we obtain the following immediate corollary.
\begin{corollary}[Statistically thin triangles]
\label{cor:halfThinTriangles}
For all $\epsilon > 0$ and $0 < \theta' < \theta\leq 1$ there exists a constant $\delta$ with the following property. For any geodesic triangle in $\T(S)$ whose three sides have $\thickfrac_\epsilon \geq \theta$, at least 
proportion $\theta'$ of each side of the triangle is contained within $\delta$ of the union of the other two sides.
\end{corollary}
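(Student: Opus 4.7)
The plan is to deduce this directly from the preceding proposition by taking $I$ to be the entire side of the triangle, with a small trick to handle short sides where the proposition's length hypothesis fails.

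Let $L'$ and $C'$ be the constants produced by the preceding proposition for the given $\epsilon$, $\theta'$, and $\theta$. I will show that $\delta := \max\{C', L'\}$ works. Consider any geodesic triangle with vertices $x, y, z$ each of whose sides $[x,y]$, $[y,z]$, $[x,z]$ has $\thickfrac_\epsilon \geq \theta$, and fix one side, say $[x,y]$.

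First suppose $d_\T(x,y) \geq L'$. Then taking $I = [x,y]$ in the preceding proposition yields a subset of $I$ of proportion at least $\theta'$ lying within $C' \leq \delta$ of $[x,z]\cup[y,z]$, which is exactly the conclusion for this side. Next suppose $d_\T(x,y) < L'$. Then every point of $[x,y]$ lies within distance $L'$ of one of the endpoints $x$ or $y$; since $x \in [x,z]$ and $y \in [y,z]$, the entire side $[x,y]$ lies in the $L'$-neighborhood, hence in the $\delta$-neighborhood, of $[x,z]\cup[y,z]$. In particular, proportion $1 \geq \theta'$ of $[x,y]$ is within $\delta$ of the union of the other two sides.

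Applying the same argument to each of the three sides in turn gives the desired statement. The only step that requires any thought beyond invoking the proposition is the short-side case, which is handled by the trivial observation that the endpoints of any side are vertices shared with the other two sides; no further work is needed.
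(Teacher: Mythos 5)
Your proof is correct and is essentially the paper's intended argument: the paper labels this an ``immediate corollary'' of the preceding proposition precisely because one takes $I$ to be the entire side. Your explicit handling of the short-side case (sides of length less than $L'$ lie in the $L'$-neighborhood of the endpoints, hence of the other two sides) is a valid and worthwhile observation that the paper leaves implicit.
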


We now give the proof of Theorem~\ref{thm:partiallyThickIntervalsAreClose}.
\begin{proof}[Proof of Theorem~\ref{thm:partiallyThickIntervalsAreClose}]
We will find $L$ so that the conclusion of the theorem applies to any subinterval $I$ with $L\leq \len(I)\leq 2L$ and $\thickfrac_\epsilon(I)\geq \theta$. This will suffice because any long interval with $\thickfrac_\epsilon\geq \theta$ can be partitioned into subintervals satisfying this length condition, one of which must have $\thickfrac_\epsilon \geq \theta$.

Recall that the (coarsely defined) projection $\pi_S\colon \T(S)\to \CC(S)$ sends a point $w\in \T(S)$ to the set of simple closed curve in the Bers marking $\mu_w$ (which is a set of diameter $2$ in the curve complex). The work of Masur--Minsky \cite{MM} shows that there are universal constants $K,C$ so that Teichm\"uller geodesics project to (unparametrized) $(K,C)$--quasi-geodesics under $\pi_S$. By the hyperbolicity of $\CC(S)$ (see \S\ref{sec:curve_graph}), each quasi-geodesic fellow travels any geodesic with the same endpoints, and so there is a constant $\tau > 0$ so that every $(K,C)$--quasi-triangle in $\CC(S)$ is $\tau$--thin. We may furthermore assume that $\tau \geq \B$.

By Theorem~\ref{prop:def-prog}  the interval $I$ moves a definite amount in $\CC(S)$; that is, by making $L$ large, we can arrange for $I$ to project to an arbitrarily long subsegment of the (unparametrized) quasi-geodesic $\pi_S([x,y])$. In particular, by choosing $L$ sufficiently large, we can ensure that there is a point $w\in I$ so that either
\[d_S(\mu_w, \pi_S([y,z])) \geq 2\tau + 6 \qquad \text{or}\qquad d_S(\mu_w,\pi_S([x,z])) \geq 2\tau+6.\]
By choosing such $w$ with $\mu_w$ near the center of $\pi_S(I)$, we can moreover ensure that 
\[d_S(w,t)\geq 2\tau + 6\]
for all points $t\in [x,y]$ outside of $I$, and in particular that $d_S(w,x),d_S(w,y)\geq 2\tau +6$.

Assuming without loss of generality that 
\[d_S(\mu_w,\pi_S([y,z]))\geq 2\tau+6 > \tau,\]
hyperbolicity implies that there is a point $u\in [x,z]$ so that $d_S(w,u)\leq \tau$. We will show that $d_V(w,u)\lboth_{\epsilon,\theta}1$ for all proper subsurfaces $V\subset S$. The result will then follow from the distance formula (\ref{dist}). We first establish the following

\begin{claim}\label{claim:side_projection_bounds}
There is a constant $M_0$ (depending only on $\epsilon$ and $\theta$) so that for any proper subsurface $V\subset S$ satisfying $d_S(\partial V, \mu_w)\leq 2\tau + 3$ we have
\[d_V(x,y), d_V(y,z), d_V(x,z) \leq M_0.\]
\end{claim}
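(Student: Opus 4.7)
The plan is to reduce the three bounds to just two via a coarse triangle inequality, and then use the thin interval property to handle those. Explicitly, the subsurface projection distance satisfies
\[ d_V(x,z)\ \le\ d_V(x,y)+d_V(y,z)+O(1), \]
since any two points of $\pi_V(\mu_x)\cup\pi_V(\mu_z)$ are joined through the bounded-diameter set $\pi_V(\mu_y)$ with total cost at most $d_V(x,y)+d_V(y,z)$; for annular $V$ the inequality is exact because $d_A$ is a genuine hyperbolic distance. Hence it suffices to bound $d_V(x,y)$ and $d_V(y,z)$ individually.

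To convert the $\CC(S)$-closeness hypothesis on $\partial V$ into an obstruction to $\partial V$ being short, I would use that any $\epsilon_0$-short curve at a point $t\in \T(S)$ appears in the Bers marking $\mu_t$, and hence satisfies $d_S(\cdot,\mu_t)\le 2$. Combining the hypothesis $d_S(\partial V,\mu_w)\le 2\tau+3$ with the two separation conditions established just before the Claim -- namely $d_S(\mu_w,\pi_S([y,z]))\ge 2\tau+6$ and $d_S(\mu_w,\mu_t)\ge 2\tau+6$ for every $t\in[x,y]\setminus I$ -- the triangle inequality in $\CC(S)$ yields $d_S(\partial V,\mu_t)\ge 3$ for every $t\in[y,z]$ as well as for every $t\in[x,y]\setminus I$. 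Therefore $\partial V$ is \emph{never} $\epsilon_0$-short along $[y,z]$, and is $\epsilon_0$-short along $[x,y]$ only possibly within the subinterval $I$.

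Now I invoke the thin-interval property from Section~\ref{sec:thinness}. Since $\I_V^{\epsilon_0}$ is empty along $[y,z]$, the third bullet of that property gives $d_V(y,z)\le\M$ immediately. Along $[x,y]$ the thin interval $\I_V^{\epsilon_0}=[a,b]$ is forced to lie inside $I$, so its Teichm\"uller length is at most $\len(I)\le 2L$. The same bullet bounds $d_V(x,a),d_V(b,y)\le\M$ on the two components of $[x,y]\setminus \I_V^{\epsilon_0}$, while the repackaged distance formula of Proposition~\ref{prop:repackage}, applied to the segment $[a,b]$ of Teichm\"uller length at most $2L$, shows $d_V(a,b)$ is controlled by a constant depending only on $L$ and $\epsilon_0$. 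Summing these via the triangle inequality gives $d_V(x,y)\lmul_{\epsilon,\theta} 1$, and the coarse triangle inequality above then yields $d_V(x,z)\lmul_{\epsilon,\theta}1$, completing the Claim.

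The delicate point, where the hypotheses on $\theta$ and the length of $I$ are essential, is the step that confines $\I_V^{\epsilon_0}\cap[x,y]$ to $I$ and thereby obtains a bound on $d_V(a,b)$ in terms of $L$ rather than of $d_\T(x,y)$ itself: this is what prevents the length or twist about $\partial V$ from growing uncontrollably outside of $I$, and it rests entirely on the careful choice of $w$ near the center of $\pi_S(I)$ made in the preceding paragraph using the definite-progress Theorem~\ref{prop:def-prog}.
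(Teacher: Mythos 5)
Your proposal is correct and follows essentially the same route as the paper: translate the $\CC(S)$-closeness of $\partial V$ to $\mu_w$ into the statement that $\partial V$ is never short along $[y,z]$ or along $[x,y]\setminus I$, conclude that $\I_V$ is empty on $[y,z]$ and confined to $I$ on $[x,y]$, bound $d_V$ across those intervals, and finish with a coarse triangle inequality for $d_V(x,z)$. The only cosmetic difference is that you invoke the repackaged distance formula on the short segment $[a,b]\subset I$ to bound $d_V(a,b)$, whereas the paper appeals directly to the (coarse) Lipschitz property of subsurface projection; these are interchangeable, though note that for annular $V$ the triangle inequality for $d_A$ is only coarse, not exact.
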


To see this, first observe that for any such $V$ the triangle inequality implies $d_S(\partial V,\pi_S([y,z]))\geq 3$. Therefore $V$ does not become thin along $[y,z]$ and so we may conclude $d_V(y,z)\leq \M$. If $V$ does not become thin along $[x,y]$, then we have the same bound on $d_V(x,y)$. However, $V$ may become thin along $[x,y]$ in which case there is a point $t\in [x,y]$ at which the length of $\partial V$ is smaller than $\epsilon_0$.  Therefore $\mu_t$ contains $\partial V$, which implies
\[d_S(w,t)\leq 2\tau + 5 < 2\tau + 6\]
and consequently that $t\in I\subset [x,y]$. Thus the entire thin interval $\I_V$ for $V$ is contained within $I$ and in particular has length at most $\len(I)\leq 2L$. Since the projection $\pi_V$ to $\CC(V)$ is a Lipschitz map and, up to an additive error, the projection of $[x,y]$ to $\CC(V)$ can only change in the thin interval $\I_V$ (see \S\ref{sec:thinness}), we conclude that $d_V(x,y)$ is bounded in terms of $L$ (and $L$ depends only on $\epsilon$ and $\theta$). Finally, the triangle inequality and the above bounds on $d_V(y,z)$ and $d_V(x,y)$ together provide a uniform bound on $d_V(x,z)$. This completes the proof of Claim~\ref{claim:side_projection_bounds}.

We now show that $d_V(w,u)$ is uniformly bounded for all proper subsurfaces. Consider any $V$ with $d_V(w,u)\geq \M$. Then $V$ becomes thin along $[w,u]$ and so $\partial V$ lies within distance $\tau+2$ of the $\CC(S)$--geodesic from $\mu_w$ to $\mu_u$. In particular
\[d_V(\partial V,\mu_w)\leq \tau+2+d_S(w,u)\leq 2\tau+2\]
and so Claim~\ref{claim:side_projection_bounds} implies that $d_V(x,y)$ and $d_V(x,z)$ are at most $M_0$.

If $V$ is a non-annular surface, then the reverse triangle inequality, applied to $[x,y]$ and $[x,z]$, yields bounds on $d_V(x,w)$ and $d_V(x,u)$ so that we may bound $d_V(w,u)$ by the triangle inequality. 

If $V$ is an annulus whose core curve $\alpha = \partial V$ satisfies $l_w(\alpha) > \epsilon_0$, then $w$ cannot be contained in the (possibly empty) thin interval $\I_V\subset[x,y]$. Thus at least one of the intervals $[x,w]$ or $[w,y]$ is disjoint from $\I_V$ and consequently has $d_V$--projection at most $\M$. Thus we may conclude $d_V(x,w)\leq \M+ M_0$ by the triangle inequality. 
If $l_w(\alpha) \leq \epsilon_0$, then $\mu_w$ necessarily contains $\alpha$ and we instead appeal to Lemma~\ref{lem:annulusquasi}.
According to that theorem applied to $2M_0$, there is a constant $\epsilon'$ such that if $l_w(\alpha) \leq \epsilon'$, then there exists a subsurface $Z$ with $d_S(\partial Z,\alpha)\leq 1$ so that $d_Z(x,y)\geq 2M_0$. Since this contradicts Claim~\ref{claim:side_projection_bounds}, we must in fact have $l_w(\alpha) > \epsilon'$. As above, it follows that $w$ cannot be contained in the $\epsilon'$--thin interval $\I_V^{\epsilon'}\subset[x,y]$ and thus that $d_V(x,w)\leq M_0 + \M_{\epsilon'}$ by the triangle inequality and the theory of thin intervals.

Since $d_S(x,w),d_S(w,z)\geq 2\tau+6$ and $d_S(w,u)\leq \tau$ by assumption, we also have $d_S(x,u),d_S(u,z)\geq \tau+6 \geq \B+6$ by the triangle inequality. Therefore we may apply the same argument, using Lemma~\ref{lem:annulusquasi} as needed, to obtain a bound on $d_V(x,u)$ as well. The triangle inequality then gives the desired bound on $d_V(w,u)$.
\end{proof}

\section{Comparing measures}
\label{s:measures}

To address genericity and averaging questions, one of course needs to consider a measure. 
In the present context of metric geometry, it is perhaps most natural to consider Hausdorff measure of the appropriate dimension. 

\begin{definition}[Hausdorff measure]
 The $n$--dimensional Hausdorff measure on a metric space will be denoted by $\hausmeas$.  It is defined by
$$\hausmeas(E):= \lim_{\delta\to 0} \left[   \inf \sum \diam(U_i)^n \right],$$
where the infimum is over countable covers $\{U_i\}$ of $E$ with $\diam U_i<\delta$ $\forall i$.
\end{definition}

For the Teichm\"uller metric, there is a nontrivial $h$--dimensional Hausdorff measure (recalling that $h=6g-6$).
As we shall see, in order to understand average distances with respect to this measure, it will be necessary to compare with other measures, 
defined below, which are also natural to consider in their own right.

\subsection{Measures on Finsler manifolds}

The Teichm\"uller space  carries several  natural volume forms coming from its structure as a Finsler manifold. 
Let us discuss these general constructions first before returning to the case of $M=\T(S)$.
The treatment closely follows the survey by \'Alvarez and Thompson \cite{AT}.

Recall that a Finsler metric on an $n$--dimensional Finsler manifold $M$ is a continuous function $F\colon T(M)\to \R$ that restricts to a norm  on each tangent space 
$T_x(M)$. There is a  dual norm on each cotangent space $T_x^*(M)$. For a point $x\in M$, 
let $B_x\subset T_x(M)$ and $B_x^*\subset T_x^*(M)$ denote the unit balls for these two norms. A local coordinate system 
$(x_1,\dotsc,x_n)$ on $M$ induces a pair of isomorphisms
\begin{equation}\label{eqn:linearizeTangentSpaces}
\phi\colon T_x(M) \to \R^n \qquad\text{and}\qquad \psi\colon T_x^*(M)\to \R^n
\end{equation}
defined by writing vectors and covectors with respect to the dual bases $\{\partial_{x_1},\dotsc,\partial_{x_n}\}$ and $\{dx_1,\dotsc,dx_n\}$. 
By definition of the dual norm, the pairing $T_x(M) \times T_x^*(M) \to \R$ is sent to the standard inner product on $\R^n$ under these isomorphisms. 
In the local coordinate chart we may now define two functions
\begin{equation*}
f(x) = \frac{\varepsilon_n}{\lambda\left(\phi(B_x)\right)}\qquad\text{and}\qquad g(x) = \frac{\lambda\left(\psi(B_x^*)\right)}{\varepsilon_n},
\end{equation*}
where $\lambda$ is Lebesgue measure and $\varepsilon_n:=\lambda({\rm Ball}^n)$ is the Lebesgue measure of the standard unit ball in $\R^n$. While these functions clearly depend on the choice of coordinates $(x_1,\dots,x_n)$, one may easily check that the $n$--forms
\[f(x)\, dx_1\wedge\dotsb\wedge dx_n \qquad\text{and}\qquad g(x) \, dx_1\wedge\dotsb\wedge dx_n\]
are independent of the coordinate system and therefore define global volume forms on $M$. 
The former is called the \emph{Busemann volume} on the Finsler manifold  and the latter is the \emph{Holmes--Thompson volume}; 
see  \cite{AT} for more details.  
These both define measures on $M$.

A third measure to consider is the one induced by the canonical symplectic form $\omega$ on the cotangent bundle,
defined as follows. Consider local coordinates $(x_1,\dotsc,x_n)$ 
defined in a neighborhood $U\subset M$. The $1$--forms $dx_1,\dotsc, dx_n$ then give a trivialization of $T^*(M)$ over $U$, 
and we have a local coordinate system on $T^*(M)$ given by 
\begin{equation}\label{eqn:symplecticCoords}
(x_1,y_1,\dotsc,x_n,y_n)\mapsto \left((x_1,\dotsc,x_n),\sum_{i=1}^n y_i \, dx_i\right).
\end{equation}
In these coordinates the canonical symplectic form may be written simply as $\omega = \sum dx_i\wedge dy_i$.
Taking exterior powers then yields a volume form $\sympmeas=\omega^n/n!$  on $T^*(M)$.
By restricting to the unit disk bundle $T^{*,\le 1}(M)$ and pushing forward by the projection $\pi\colon T^*(M)\to M$, we obtain 
a {\em symplectic measure} $\n$ on $M$.

Finally, a Finsler metric on a smooth manifold $M^n$ induces a path metric $d$ in the usual way, and this in turn gives rise to a Hausdorff measure in any dimension. 

Recall that a centrally symmetric convex body $\Omega\subset \R^n$ determines a \emph{polar body} 
$\Omega^\circ\subset (\R^n)^*=\R^n$ via
\[\Omega^\circ := \{\xi\in\R^n \mid \xi\cdot v \leq 1 \;\forall v\in \Omega\}.\]
The \emph{Mahler volume} of $\Omega$ is then defined to be the product 
$\Mah(\Omega):=\lambda(\Omega)\sdot \lambda(\Omega^\circ)$ of the Lebesgue volumes of $\Omega$ and $\Omega^\circ$. 
For any centrally symmetric convex body $\Omega$, it is known that
\begin{equation}\label{eqn:MahlerIneqs}
\frac{\varepsilon_n^2}{n^{n/2}} \leq \Mah(\Omega) \leq   \varepsilon_n^2= \Mah({\rm Ball}^n).
\end{equation}
The first inequality was established by John \cite{J}, and the latter, which gives an equality if and only if the norm is Euclidean,  is known as the Blaschke--Santal\'o inequality \cite{Santalo}.

\begin{theorem}[Assembling facts on Finsler measures]\label{finsler-compare}
Suppose that $M^n$ is a continuous  Finsler manifold.  
Then
\begin{itemize}
\item the Busemann measure $\busemeas$ and  the $n$--dimensional Hausdorff  measure $\hausmeas$ are equal;
\item the Holmes--Thompson measure $\htmeas$ and the symplectic measure $\n$ are 
scalar multiples: $\htmeas=\frac{1}{\varepsilon_n}\n$;
\item $\htmeas \leq \busemeas \leq (n^{n/2})\, \htmeas$, with equality of measures if and only  if the metric is 
Riemannian.
\end{itemize}

\end{theorem}

Note that it is still possible for $\htmeas$ and $\busemeas$ to be scalar multiples of each other in the non-Riemannian case, 
for instance on a vector space with a Finsler norm.

\begin{proof}
The first statement was originally shown by Busemann in the 1940s in \cite{B} and is stated in modern language in 
\cite[Thm 3.23]{AT}.

The second statement is straightforward and we include a proof for completeness.
Working in the local coordinates and applying the Fubini theorem, we see that the Holmes--Thompson 
volume of a subset $E\subset M$ is given by:
\begin{align*}
\int_{E}g(x) \, dx_1\wedge\dotsb\wedge dx_n
&= \int_E \left(\int_{\psi(B_x^*)}\frac{1}{\varepsilon_n} d\lambda\right)dx_1\wedge\dotsb\wedge dx_n\\
&= \frac{1}{\varepsilon_n}\int_{\pi^{-1}(E)\cap  T^{*,\le 1}(M)} dy_1\wedge\dotsb\wedge dy_n \wedge dx_1\wedge \dotsb \wedge dx_n\\
&= \frac{1}{\varepsilon_n}  \, \n(E).
\end{align*}

For the third statement, recall that the measures are defined by 
\[\busemeas(E) =  \int_E f(x) \, dx_1\wedge\dotsb\wedge dx_n \quad\text{and} \quad
\htmeas(E) = \int_E g(x) \, dx_1\wedge\dotsb\wedge dx_n.\]

For each $x\in M$, the unit ball $B_x\subset T_x(M)$ is sent to a centrally symmetric convex body 
$\phi(B_x)\subset \R^n$ under the isomorphism $\phi$ defined in (\ref{eqn:linearizeTangentSpaces}). 
The polar body is exactly given by $\phi(B_x)^\circ = \psi(B_x^*)$. Therefore, the Mahler volume of $\phi(B_x)$ is 
\[\Mah(\phi(B_x)) = \lambda(\phi(B_x))\sdot \lambda(\psi(B_x^*)) = \varepsilon_n^2 \frac{g(x)}{f(x)}.\]
Combining with (\ref{eqn:MahlerIneqs}) now implies that $n^{-n/2}f(x) \leq g(x) \leq f(x)$ for all $x\in M$. We conclude that 
$\htmeas(E) \leq \busemeas(E)\leq  n^{n/2}\htmeas(E)$ for all $E\subset M$.
Finally, since Blaschke--Santal\'o can only give equality for a Euclidean norm, it follows that $\busemeas$ and $\htmeas$ 
can only be equal for a Riemannian metric.
\end{proof}

\subsection{Measures coming from quadratic differentials}

Recall that quadratic differential space $\QD(S)$ is naturally identified with the cotangent bundle $T^*(\T(S))$ of Teichm\"uller space, and that each quadratic differential $q\in \QD(S)$ has a norm $\norm{q}$ given by the area of the flat structure on $S$ induced by $q$. The unit disk bundle for this norm will be denoted by
\[\diskbund(S) = \{q\in\QD(S) : \norm{q}\leq 1\}.\]
Using this disk bundle, the natural symplectic measure $\sympmeas$ on $\QD(S)$ descends to a measure $\n$ on $\T(S)$ exactly as above.  
We note that $\omega$ and therefore $\sympmeas$ and $\n$ are invariant under the action of the mapping class group.

The space $\QD(S)$ also carries a natural $\Mod(S)$--invariant measure $\muhol$ that is defined in terms of holonomy coordinates and which we will refer to as {\em holonomy measure}; it is also sometimes called Masur--Veech measure in the literature (see \cite{M} for details). This measure has been studied extensively, for instance to establish ergodicity results for the geodesic flow.
The measure $\muhol$ is also related to the ``Thurston measure'' $\thumeas$ on the space of measured foliations $\MF$ 
induced by the piecewise-linear structure of $\MF$ \cite{FLP}. Indeed, as seen in \cite{M}, $\muhol$ is equal to the pullback of 
$\thumeas\times\thumeas$ under the $\Mod(S)$--invariant map $\QD(S)\to\MF\times\MF$ that sends a quadratic differential to its vertical and horizontal foliations.

Just as $\sympmeas$ induces $\n$, the holonomy measure 
$\muhol$ descends to a measure $\m$ on $\T(S)$. Explicitly, the $\m$--measure of a set $E\subset \T(S)$ is given by
\[\m(E):= \muhol\left(\pi^{-1}(E)\cap \diskbund(S)\right).\]
This measure $\m$ has been studied previously in \cite{ABEM} and \cite{EM}.

\begin{proposition}\cite[p.3746]{Mas}\label{prop:holVSsymp} There is a scalar $k>0$ such that $\sympmeas=k\sdot\muhol$.
\end{proposition}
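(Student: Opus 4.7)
The plan is to compare the two measures in a single convenient atlas of coordinates on $\QD(S)$, namely the period (holonomy) coordinates, and show that each is a constant-coefficient volume form in every such chart. First I would recall that at any quadratic differential $q$ with simple zeros, passing to the canonical orienting double cover turns $q$ into the square of an abelian differential $\omega_q$. Integrating $\omega_q$ against a symplectic basis of the anti-invariant part of the relative homology of the double cover (relative to the zeros of $q$) produces complex-valued local coordinates $(z_1,\dots,z_n)$ on an open subset of $\QD(S)$, where $2n=\dim_\R \QD(S)$. By construction, these charts realize a piece of $\QD(S)$ as an open set in a complex vector space, with transition functions that are $\Z$-linear (they come from changes of symplectic basis).

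The next step is to verify that both measures are constant-coefficient in these coordinates. For the holonomy measure this is the definition: $\muhol$ is pulled back from Lebesgue measure on $\MF\times\MF$ via the map sending $q$ to its vertical and horizontal foliations, and in period coordinates this pullback is exactly a fixed multiple of $|dz_1\wedge d\bar z_1\wedge\cdots\wedge dz_n\wedge d\bar z_n|$. For the symplectic measure $\sympmeas=\omega^n/n!$, I would appeal to the classical identification of the canonical cotangent-bundle symplectic form on $T^*\T(S)=\QD(S)$ with the intersection pairing on the cohomology carrying the period coordinates; this identification shows that $\omega$ has constant coefficients in $(z_i,\bar z_i)$, hence $\omega^n/n!$ is another fixed scalar multiple of the same Lebesgue form.

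Once both $\sympmeas$ and $\muhol$ are expressed as scalar multiples of the same coordinate volume form in one chart, the ratio $k$ is a constant on that chart. Because the transition functions between period charts are volume-preserving integral-linear maps, this constant is globally the same on the principal stratum, and hence on $\QD(S)$ (the nonprincipal strata have $\muhol$-measure zero and can be handled by continuity of $\sympmeas$ and $\muhol$ as Radon measures).

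The main obstacle I expect is the middle step: pinning down the identification of the canonical cotangent symplectic form with the period-coordinate bilinear form, which requires care with how $T^*\T(S)$ is identified with $\QD(S)$ (via the pairing between quadratic differentials and Beltrami differentials) and with the Hodge-theoretic description of tangent vectors to $\T(S)$ in period coordinates. All the other ingredients (existence of period coordinates, $\Mod(S)$-invariance, integral-linear transition functions, constancy of a $\Mod(S)$-invariant ratio of two Lebesgue-class invariant measures) are standard.
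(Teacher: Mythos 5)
Your proposal is correct in outline but follows a genuinely different route from the proof the paper sketches. The paper recalls Masur's dynamical argument: the Teichm\"uller geodesic flow is Hamiltonian for $H(q)=\norm{q}^2/2$, hence preserves $\omega$ and $\sympmeas$; both $\sympmeas$ and $\muhol$ descend to flow-invariant measures on $\QD(S)/\Mod(S)$, where $\muhol$ is ergodic; combined with the absolute continuity $\sympmeas\ll\muhol$, the Radon--Nikodym derivative is a flow-invariant function, hence constant almost everywhere, which yields $\sympmeas=k\cdot\muhol$. You instead compare the two measures directly in period coordinates, verify that each is a constant-coefficient Lebesgue form there, and then globalize using unimodularity of the transitions together with the fact that the non-principal strata are null for both measures. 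What your route buys is explicitness: it bypasses ergodicity and the absolute-continuity input entirely in favor of a single local computation. The cost is that the step you flag as the obstacle---that the canonical cotangent symplectic form on $T^*\T(S)=\QD(S)$ is constant-coefficient (essentially the anti-invariant intersection pairing) in period coordinates---is genuinely the harder content of \cite{Mas} and does \emph{not} follow formally from the Hamiltonian-flow statement alone: knowing that a prescribed linear flow is Hamiltonian for a quadratic $H$ with respect to $\omega$ does not determine $\omega$. So if you are willing to cite or re-derive that coordinate identification of the symplectic form, your argument closes and is arguably more self-contained than the paper's; otherwise the paper's softer ergodicity route is the shorter path, since it uses only the qualitative ``Hamiltonian'' conclusion of \cite{Mas}. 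Your treatment of the lower-dimensional strata is fine, since they have positive codimension and hence zero measure for both $\sympmeas$ and $\muhol$.
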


We recall the outline of the argument here.
In \cite{Mas}, it was shown that the Teichm\"uller geodesic flow on $\QD(S)$ is a Hamiltonian flow for the function
\[H(q) = \frac{\norm{q}^2}{2}.\]
As such, the Teichm\"uller flow preserves the symplectic form $\omega$ and the corresponding measure $\sympmeas$. The measures 
$\sympmeas$ and $\muhol$ both descend to the quotient space $\QD(S)/\Mod(S)$; furthermore, the latter defines an ergodic measure for the 
Teichm\"uller flow on $\QD(S)/\Mod(S)$ \cite{M}. Since $\sympmeas$ is absolutely continuous with respect to $\muhol$, the proposition follows. 

We therefore also have $\n=k\m$, and combining Proposition~\ref{prop:holVSsymp} with Theorem~\ref{finsler-compare} we get:
\begin{corollary}
\label{haus-vs-hol} There are scalars $k_2>k_1>0$ such that 
$$k_1 \m \le \hausmeas \le k_2 \m.$$
\end{corollary}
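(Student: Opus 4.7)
The strategy is a direct chaining of the two preceding results. By Proposition~\ref{prop:holVSsymp}, $\sympmeas = k\sdot\muhol$ as measures on $\QD(S)$. Since $\n$ and $\m$ are obtained from $\sympmeas$ and $\muhol$ respectively by the \emph{same} recipe---restricting to the unit disk bundle $\diskbund(S)$ and pushing forward under $\pi\colon \QD(S)\to \T(S)$---the proportionality descends verbatim:
\[\n \;=\; k\sdot\m \quad\text{on } \T(S).\]

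Next, apply Theorem~\ref{finsler-compare} to the Finsler manifold $(\T(S), d_\T)$ of dimension $n = h = 6g-6$. The identification $T^*_x\T(S)\cong\QD(x)$, together with the fact that $d_\T$ arises by dualizing the area norm on $\QD$, guarantees that the Finsler dual unit ball $B^*_x$ coincides with $\diskbund(x)$, so the general Finsler construction of $\htmeas$ really does recover $\frac{1}{\varepsilon_h}\n$. The theorem then supplies three facts: $\hausmeas = \busemeas$, the equality $\htmeas = \frac{1}{\varepsilon_h}\n$, and the universal two-sided bound $\htmeas \le \busemeas \le h^{h/2}\,\htmeas$.

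Substituting $\n = k\sdot\m$ yields $\htmeas = \frac{k}{\varepsilon_h}\sdot\m$; feeding this into the two-sided inequality and using $\hausmeas = \busemeas$ then gives
\[\frac{k}{\varepsilon_h}\sdot\m \;\le\; \hausmeas \;\le\; \frac{h^{h/2}\,k}{\varepsilon_h}\sdot\m,\]
so one may take $k_1 = k/\varepsilon_h$ and $k_2 = h^{h/2}k/\varepsilon_h$. There is no real obstacle; the argument is essentially bookkeeping, and the only step worth a brief verification is the compatibility of the Finsler dual unit ball with the quadratic differential unit ball used in the definitions of $\n$ and $\m$, which is immediate from the Teichm\"uller duality built into the setup.
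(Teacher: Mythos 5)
Your proof is correct and follows essentially the same route as the paper, which simply chains Proposition~\ref{prop:holVSsymp} (giving $\n = k\m$) with Theorem~\ref{finsler-compare} (giving $\hausmeas = \busemeas$, $\htmeas = \frac{1}{\varepsilon_h}\n$, and $\htmeas \le \busemeas \le h^{h/2}\htmeas$). Your additional remark on the compatibility of the Finsler dual unit ball with $\diskbund(x)$ is a reasonable sanity check but does not change the argument.
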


\subsection{Visual measures}\label{vismeas}
The unit sphere subbundle of $\QD(S)$ will be denoted by
\[\QD^1(S)=\{q\in \QD(S) \colon \norm{q} = 1\}.\]
For each $x\in\T(S)$, the fiber $\QD^1(x)$ is identified with the ``space of directions'' at $x$, and the Teichm\"uller geodesic flow 
$\varphi_t\colon \QD(S)\to\QD(S)$ gives rise to a homeomorphism
$$\begin{array}{rccc}
\Psi_x\colon &\QD^1(x)\times(0,\infty) &\to& \T(S)\setminus\{x\}\\
&(q,r) &\mapsto& \pi(\varphi_r(q)) \ ,
\end{array}$$
which serves as ``polar coordinates'' centered at $x$. Furthermore, this conjugates $\varphi_t$ to a \emph{radial  flow} 
based at $x$ given by 
\[\hat{\varphi}_t(\pi(\varphi_r(q))):= \pi(\varphi_{r+t}(q)).\]
We will consider measures on $\T(S)$ that are compatible with these polar coordinates and with the radial  flow.

\begin{definition}[Visual measure]
Given any measure $\kappa_x$ on the unit sphere $\QD^1(x)\cong \mathbb{S}^{h-1}$, we define the corresponding \emph{visual measures} on $\Sph{r}{x}$ and $\T(S)$ as follows. Firstly, the visual measure $\vis_r(\kappa_x)$ on the sphere $\Sph{r}{x}$ of radius $r$ is just the push-forward of $e^{hr}\kappa_x$ under the homeomorphism $\QD^1(x)\times\{r\} \cong \Sph{r}{x}$. Integrating these over $(0,\infty)$ then gives a visual measure on $\T(S)$ defined by 
\[\vis(\kappa_x)(E) := \int_{(q,r)\in E\subset \QD^1(S)\times (0,\infty)}e^{hr}d\kappa_x(q)d\lambda(r).\]
Said differently, $\vis(\kappa_x)$ is equal to the push-forward of $\kappa_x\times \lambda_0$ under the homeomorphism $\Psi_x$, where $\lambda_0$ is the weighted Lebesgue measure on $(0,\infty)$ given by  $\lambda_0([a,b]) = \int_a^b e^{hr}d\lambda(r) = (e^{hb}-e^{ha})/h$. (We have scaled things in this way so that the visual measure of the ball of radius $R$ grows like $e^{hR}$.) 
\end{definition}

The essential feature of visual measures is that they enjoy the following ``normalized invariance'' under the radial  flow: For any $t\geq 0$ and measurable  $E\subset \Sph{r}{x}$ we have
\[\frac{\vis_{r+t}(\kappa_x)(\hat{\varphi}_t(E))}{\vis_{r+t}(\kappa_x)(\Sph{r+t}{x})} = \frac{\vis_r(\kappa_x)(E)}{\vis_r(\kappa_x)(\Sph{r}{x})}.\]
The same invariance holds for $\vis(\kappa_x)$ when we normalize with respect to annular shells $\Ball{b}{x}\setminus\Ball{a}{x}$ instead of spheres.

There are two visual measures that specifically interest us. Firstly, the normed vector space $\QD(x)$ carries a unique translation-invariant measure $\nu_x$ normalized so that $\nu_x(B_x^*) = 1$; recall that the unit ball $B_x^*$ is just the intersection $\QD^{\leq 1}(S)\cap \QD(x)$. 
This induces a measure (also denoted $\nu_x$) on the unit sphere $\QD^1(x)$ via the usual method of coning off: $\nu_x(E) := \nu_x\left([0,1]\times E\right)$ for $E\subset \QD^1(x)$.

Secondly, since $\QD(S)$ has the structure of a fiber bundle over $\T(S)$, we can define a conditional measure $s_x$ on $\QD(x)$ by disintegration from $\muhol$. 
More precisely, $s_x$ is the unique measure on $\QD(x)$ such that the $\muhol$--measure of $E\subset \QD(S)$ is given by
\[\muhol(E) = \int_{\T(S)} s_x(E\cap\QD(x))\,d\m(x).\]
Via the process of coning off, we again think of $s_x$ as a measure on $\QD^1(x)$.

The space $\QD(S)$ of quadratic differentials is a complex vector bundle; as such, there is a natural circle action 
$S^1\curvearrowright \QD(S)$ that preserves each fiber $\QD(x)$ and unit sphere $\QD^1(x)$. We say that a visual measure 
$\vis(\kappa_x)$ is \emph{rotation-invariant} if the corresponding measure $\kappa_x$ on $\QD^1(x)$ is invariant under this 
action of $S^1$. The visual measure $\vis(\nu_x)$ is rotation-invariant because $S^1$ preserves the unit ball $B_x$. Similarly, 
$\vis(s_x)$ is rotation-invariant because $S^1$ preserves $\muhol$.

\subsection{Summary}\label{meas-comp-sum}

The measures on $\T(S)$ considered above are $\n$ and $\m$ (induced by the symplectic and 
holonomy measures on $\QD(S)$, respectively, via the covering map), Hausdorff measure $\hausmeas$, the visual measures
$\vis(\kappa_x)$ created by radially flowing measures on the sphere of directions $\QD^1(x)$, and the measures
$\busemeas$ and $\htmeas$ coming from the Finsler structure.

We found that $\n$, $\m$, and  $\htmeas$ 
are scalar multiples of each other, Hausdorff measure and Busemann measure coincide, 
and all five of these are mutually comparable in the sense of being bounded above and below by scalar multiples of each
other. In the following section we will establish results about the structure of generic geodesic rays with respect to  these measures
and the visual measures.


\section{Thickness statistics for geodesic rays}
\label{sec:separationAndThickness}
\label{sec:volumeAndDistEstimates}

In \S\ref{sec:thick_geodesics} we studied the behavior of Teichm\"uller geodesics that spend a definite fraction of their time in some thick part $\T_\epsilon$. In this section we will show that most Teichm\"uller geodesics in fact satisfy this property. Therefore the tools developed in \S\ref{sec:thick_geodesics} apply generically and we may use them in studying averaging questions such as Theorems \ref{thm:balls}, \ref{thm:spheres}, and \ref{T:expected_thinness}.

In all of what follows, if $x$ is a fixed basepoint and $y\in \Ball{r}{x}$ is a point in the ball centered at $x$, then we will write $y_t$ to denote the time--$t$ point on the geodesic ray based at $x$ and traveling through $y$.

\subsection{Volume estimates}
We begin by recalling some estimates on the volume of Teichm\"uller balls and using these to reduce to the case of annular shells.

Athreya, Bufetov, Eskin, and Mirzakhani \cite{ABEM} have found the following asymptotic estimate for the $\m$--volume of a ball of radius $r$.

\begin{theorem}[Volume asymptotics {\cite[Theorem 1.3]{ABEM}}]\label{thm:volumeBalls}
There is a (bounded) function $f\colon\T(S) \to (0,\infty)$ such that for each $x\in \T(S)$
\[\lim_{r\to \infty} \frac{\m(\Ball{r}{x}))}{e^{hr}} = f(x).\]
\end{theorem}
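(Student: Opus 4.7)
The plan is to use the polar-coordinate identification $\Psi_x \colon \QD^1(x) \times (0,\infty) \to \T(S) \setminus \{x\}$ together with the dynamics of the Teichm\"uller geodesic flow $\varphi_t$, which preserves the holonomy measure $\muhol$ on $\QD(S)$. By definition, $\m(\Ball{r}{x}) = \muhol\bigl(\pi^{-1}(\Ball{r}{x}) \cap \diskbund\bigr)$, so the task becomes understanding the $\muhol$-measure of the preimage of the Teichm\"uller ball inside the disk bundle, and reorganizing this measure according to the foliation of $\diskbund$ by radial $\varphi_t$-orbits based at $x$.

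First, I would use the disintegration of $\muhol$ along $\pi$ to write the measure in terms of the conditional measure $s_x$ on $\QD(x)$ introduced in \S\ref{vismeas}, and then change variables via $\Psi_x$. Because the geodesic flow acts on the fiber $\QD(x)$ with Jacobian $e^{ht}$ (where $h=6g-6$ matches both the real dimension of $\T(S)$ and the entropy of $\varphi_t$), this transformation converts the $\m$-integral on $\Ball{r}{x}$ into an integral over $\QD^1(x)$ weighted by $\int_0^r e^{ht}\,dt \sim e^{hr}/h$. This step immediately produces the correct exponential order of growth and displays $s_x$ as the natural infinitesimal object controlling the leading coefficient.

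Second, for the sharp asymptotic constant, I would invoke mixing of the Teichm\"uller flow on moduli space together with equidistribution of the ``unstable'' (horizontally expanding) horospheres obtained by pushing $\QD^1(x)$ forward under $\varphi_r$. The philosophy, going back to Margulis in the hyperbolic setting, is that after normalizing by $e^{hr}$, the pushed sphere equidistributes against a canonical measure on the moduli space $\QD^1(S)/\Mod(S)$, and pairing this limit against the conditional distribution $s_x$ at the basepoint produces a limit $f(x)$ that depends continuously on $x$ and is bounded as $x$ varies. The rotation-invariance of $s_x$ noted in \S\ref{vismeas} would be used to simplify this pairing.

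The hard part will be controlling the equidistribution in the non-compact quotient $\T(S)/\Mod(S)$: orbits of $\varphi_r$ may spend significant time in the thin part, and without uniform recurrence one cannot pass from mixing in $L^2$ to the pointwise asymptotic claimed. Overcoming this requires quantitative non-divergence estimates for the Teichm\"uller flow (in the spirit of Eskin-Masur) to show that the mass escaping to the cusps of moduli space contributes only lower-order error, so that the normalized visual measure on $\varphi_r(\QD^1(x))$ genuinely converges to the global equilibrium measure.
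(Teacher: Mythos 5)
The paper does not prove Theorem~\ref{thm:volumeBalls}; it quotes it verbatim from Athreya--Bufetov--Eskin--Mirzakhani \cite[Theorem~1.3]{ABEM} as an external input, so there is no ``paper's own proof'' to compare against. What you have written is an outline of the strategy of \cite{ABEM}, and your second and third paragraphs---mixing of the Teichm\"uller geodesic flow, Margulis-style equidistribution of pushed spheres against the equilibrium measure on $\QD^1(S)/\Mod(S)$, and quantitative non-divergence in the spirit of Eskin--Masur and Athreya---do identify the essential dynamical ingredients of that argument.

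Your first step, however, contains a substantive conflation. The disintegration of $\muhol$ along $\pi$ is tautological here: by the normalization $s_y(\QD^{\le 1}(y)) = 1$ (which follows from the defining identity $\m(F) = \int_F s_y(\QD^{\le 1}(y))\,d\m(y)$ for all Borel $F$), it merely returns $\m(\Ball rx) = \m(\Ball rx)$ and does not produce any polar-coordinate structure. When you then change variables via $\Psi_x$ and invoke the flow Jacobian $e^{ht}$, the quantity you are actually computing is the \emph{visual} measure $\vis(s_x)(\Ball rx)$, which grows like $e^{hr}$ by construction---not $\m(\Ball rx)$. Establishing that $\m$ has the same exponential order as the visual measure (Theorem~1.2 of \cite{ABEM}) and then that the normalized limit exists (Theorem~1.3) is precisely the content of the theorem, and it cannot be obtained from a fiberwise Jacobian computation at the basepoint. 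The actual argument works instead in Hopf coordinates: it uses that $\muhol$ is the pullback of $\thumeas\times\thumeas$ under the (horizontal foliation, vertical foliation) map, that $\thumeas$ is homogeneous of degree $h$, and that the geodesic flow dilates one factor and contracts the other, so that $\pi^{-1}(\Ball rx)\cap\diskbund$ becomes a ``sector'' whose mass is then controlled by mixing and non-divergence. So the later steps of your sketch point at the right tools, but the claim that the change of variables ``immediately produces the correct exponential order'' for $\m$ skips the nontrivial comparison that the theorem is asserting.
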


\begin{corollary}[Definite exponential growth]
\label{cor:expGrowthBalls}
Let $\mu$ denote Hausdorff measure $\hausmeas$, holonomy measure $\m$, 
or any visual measure $\mu_x=\vis(\kappa_x)$. 
For each $x\in \T(S)$, there exist constants $C_1\leq C_2$ such that for all sufficiently large $r$ (depending on $x$) we have
\[C_1 e^{hr} \leq \mu(\Ball{r}{x})\leq C_2 e^{hr}.\]
\end{corollary}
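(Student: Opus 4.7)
The plan is to verify the claim separately for each of the three classes of measures, appealing to previously established results in each case. In all three cases the constants $C_1, C_2$ will depend on $x$, which is consistent with the statement.

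First I would handle the holonomy measure $\m$. This case is essentially immediate from the Athreya--Bufetov--Eskin--Mirzakhani asymptotic (Theorem~\ref{thm:volumeBalls}): since $\m(\Ball{r}{x})/e^{hr}\to f(x)$ and the limit $f(x)$ is strictly positive, for all sufficiently large $r$ (depending on $x$) we have $\tfrac{1}{2}f(x) \le \m(\Ball{r}{x})/e^{hr} \le 2f(x)$, which gives the desired bounds with $C_1 = f(x)/2$ and $C_2 = 2f(x)$.

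Next I would deduce the case of Hausdorff measure from the previous case. By Corollary~\ref{haus-vs-hol}, there are universal constants $k_1, k_2 > 0$ with $k_1\m \le \hausmeas \le k_2\m$ pointwise on measurable sets; combining these comparisons with the bounds just established for $\m(\Ball{r}{x})$ yields the claim for $\hausmeas$ with constants $k_1 C_1$ and $k_2 C_2$.

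Finally, for a visual measure $\mu_x = \vis(\kappa_x)$, the bound follows from a direct computation rather than an appeal to deep volume asymptotics. By construction of the polar coordinates $\Psi_x$, the metric sphere $\Sph{s}{x}$ is the image of $\QD^1(x)\times\{s\}$, so the ball $\Ball{r}{x}$ is (up to a measure-zero set) the image of $\QD^1(x)\times(0,r]$. Applying Fubini to the definition of the visual measure gives
\[
\vis(\kappa_x)(\Ball{r}{x})
= \int_0^r\!\!\int_{\QD^1(x)} e^{hs}\, d\kappa_x(q)\, d\lambda(s)
= \frac{\kappa_x(\QD^1(x))}{h}\bigl(e^{hr} - 1\bigr),
\]
which is bounded between constant multiples of $e^{hr}$ for all sufficiently large $r$. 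There is no serious obstacle here; the only thing to note is that the visual case requires $\kappa_x(\QD^1(x))$ to be finite and positive, which is part of the standing assumption that $\kappa_x$ is a measure on the compact space $\QD^1(x)$ with nontrivial total mass (this holds for both standard choices $\nu_x$ and $s_x$).
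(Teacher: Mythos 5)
Your proposal is correct and follows the same three-pronged route as the paper: for $\m$ it appeals to Theorem~\ref{thm:volumeBalls}, for $\hausmeas$ it uses the comparison in Corollary~\ref{haus-vs-hol}, and for visual measures it observes that exponential growth of balls is built into the definition (your explicit Fubini computation $\vis(\kappa_x)(\Ball{r}{x}) = \kappa_x(\QD^1(x))(e^{hr}-1)/h$ is precisely what the paper means by this). The only difference is that you spell out the visual-measure calculation and flag the implicit finiteness/positivity hypothesis on $\kappa_x$, both of which are harmless elaborations rather than deviations from the paper's argument.
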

\begin{proof}
This is built into the definition of the visual measure $\vis(\kappa_x)$. For the holonomy measure $\m$, this follows from Theorem~\ref{thm:volumeBalls} above. The same estimate then holds for $\hausmeas$ by Proposition~\ref{prop:holVSsymp} and Corollary~\ref{haus-vs-hol}.
\end{proof}

Of course, this holds for the other measures discussed in this paper as well by the comparisons in the 
last section.

For any $r > k > 0$, let $\Ann{r}{x}{k} = \Ball{r}{x} \setminus \Ball{r-k}{x}$ denote the annular shell between radii $r$ and $r-k$. The fact that the volume of a ball grows exponentially in the radius means that we can focus our attention on annuli rather than on balls.

\begin{lemma}[Reduction to annuli]
\label{lem:annuliReduction}
Fix $x\in \T(S)$ and let $\mu$ be any measure with definite exponential growth (i.e., satisfying the conclusion of Corollary~\ref{cor:expGrowthBalls}). 
Suppose that for all $k > 0$ we have
\[\lim_{r\to \infty} \frac{1}{r} \frac{1}{\mu(\Ann{r}{x}{k})^2}\int_{\Ann{r}{x}{k}\times\Ann{r}{x}{k}}d_\T(y,z) \ d\mu(y)d\mu(z) = 2.\]
Then the same holds when $\Ann{r}{x}{k}$ is replaced  by $\Ball{r}{x}$.
\end{lemma}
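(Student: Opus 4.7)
The plan is to use definite exponential growth to show that, for $k$ large, the annulus $\Ann{r}{x}{k}$ captures almost all of the mass of $\Ball{r}{x}$, so the double integral over the ball is well-approximated by the double integral over the annulus. The upper bound is free from the triangle inequality, so only the lower bound requires work.

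First I would set up notation: let $V(r)=\mu(\Ball{r}{x})$ and $A(r,k)=\mu(\Ann{r}{x}{k})=V(r)-V(r-k)$. From definite exponential growth (Corollary~\ref{cor:expGrowthBalls}), for all sufficiently large $r$,
\[
\frac{V(r-k)}{V(r)}\ \le\ \frac{C_2 e^{h(r-k)}}{C_1 e^{hr}}\ =\ \frac{C_2}{C_1}e^{-hk},
\]
so that $A(r,k)/V(r)\ge 1-(C_2/C_1)e^{-hk}=:1-\eta(k)$, where $\eta(k)\to 0$ as $k\to\infty$.

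Next I would bound the ball integral from above and below. Since every pair in $\Ball{r}{x}$ satisfies $d_\T(y,z)\le 2r$, we have
\[
\frac{1}{r\,V(r)^2}\int_{\Ball{r}{x}\times\Ball{r}{x}}d_\T(y,z)\,d\mu(y)d\mu(z)\ \le\ 2,
\]
which gives $\limsup_{r\to\infty}\le 2$ automatically. For the lower bound, restrict the domain of integration to $\Ann{r}{x}{k}\times\Ann{r}{x}{k}\subset\Ball{r}{x}\times\Ball{r}{x}$ to obtain
\[
\frac{1}{r\,V(r)^2}\int_{\Ball{r}{x}^2}d_\T\,d\mu^2\ \ge\ \left(\frac{A(r,k)}{V(r)}\right)^{\!2}\cdot\frac{1}{r\,A(r,k)^2}\int_{\Ann{r}{x}{k}^2}d_\T\,d\mu^2.
\]

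Now I would take the limit in $r$. By the hypothesis, the second factor on the right tends to $2$ as $r\to\infty$, and by the volume estimate above, the first factor is at least $(1-\eta(k))^2$ for all sufficiently large $r$. Hence
\[
\liminf_{r\to\infty}\frac{1}{r\,V(r)^2}\int_{\Ball{r}{x}^2}d_\T\,d\mu^2\ \ge\ 2(1-\eta(k))^2.
\]
Since $k$ was arbitrary and $\eta(k)\to 0$, letting $k\to\infty$ gives $\liminf\ge 2$, which combined with the upper bound yields the desired limit $2$.

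There is no real obstacle here; the argument is a soft reduction relying only on exponential growth and monotonicity of the integrand's domain. The only thing to be careful about is that the hypothesis is stated for each fixed $k$ and that the constants $C_1,C_2$ from Corollary~\ref{cor:expGrowthBalls} are independent of $r$ (once $r$ is sufficiently large), which is precisely what lets us send $k\to\infty$ after $r\to\infty$.
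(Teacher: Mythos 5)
Your argument is correct and is essentially the same as the paper's: both restrict the double integral to the annulus, factor out the volume ratio $\bigl(\mu(\Ann{r}{x}{k})/\mu(\Ball{r}{x})\bigr)^2 \ge \bigl(1-(C_2/C_1)e^{-hk}\bigr)^2$ using the exponential growth bounds, and then send $r\to\infty$ followed by $k\to\infty$. The only cosmetic difference is that you introduce the shorthand $\eta(k)$ and explicitly separate the trivial $\limsup \le 2$ step, which the paper leaves implicit as the leading ``$2\ge$'' in its chain of inequalities.
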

\begin{proof}
Let $C_1, C_2$ be as in Corollary~\ref{cor:expGrowthBalls} above. For each $k$ sufficiently large (satisfying $\frac{C_2}{C_1}e^{-hk} < 1$) and all sufficiently large $r$ we have
\begin{align*}
2 &\geq
\frac{1}{r}\frac{1}{\mu(\Ball{r}{x})^2}\int_{\Ball{r}{x}\times\Ball{r}{x} }d_\T(y,z) \ d\mu(y)d\mu(z)\\
&\geq \left(\frac{\mu(\Ball{r}{x})}{\mu(\Ball{r}{x})} - \frac{\mu(\Ball{r-k}{x})}{\mu(\Ball{r}{x})}\right)^2 \frac{1}{r}\frac{1}{\mu(\Ann{r}{x}{k})^2}\int_{\Ann{r}{x}{k}\times\Ann{r}{x}{k}}d_\T(y,z) \ d\mu(y)d\mu(z)\\
&\geq \left(1 - \frac{C_2}{C_1}e^{-hk}\right)^2 \frac{1}{r}\frac{1}{\mu(\Ann{r}{x}{k})^2}\int_{\Ann{r}{x}{k}\times\Ann{r}{x}{k}}d_\T(y,z) \ d\mu(y)d\mu(z).
\end{align*}
The claim now follows since, by assumption, the latter becomes arbitrarily close to $2$ when $r$ and $k$ are sufficiently large.
\end{proof}

\subsection{The thickness property}\label{s:thick-stat}
Recall from \S\ref{sec:thick_geodesics} that the thick-stat of a nondegenerate geodesic $[x,y]\subset \T(S)$ is defined by
\[\thickfrac_{\epsilon}[x,y]= \frac{\bigl| \{ 0 \le s \le d_\T(x,y) : y_s \in \T_{\epsilon}\}\bigr|}{d_\T(x,y)},\]
where $y_s$ denotes the time--$s$ point on the geodesic ray from $x$ through $y$.
 The goal of this section is to show that, for all measures of interest, the thick-stat $\thickfrac_{\epsilon}[x,y]$ is uniformly bounded below for most $y\in \Ann{r}{x}{k}$.
More precisely, we will show that these measures satisfy the following property.

\begin{definition}[Property P1]
 \label{thickness}
We say a measure $\mu$ on $\T(S)$ has the {\em thickness property} (P1) if for all $0<\theta,\sigma<1$, there exists $\epsilon > 0$ such that 
\[\lim_{r\to\infty} \frac{\mu \left( \bigl\{y\in \Ann rxk : \thickfrac_{\epsilon}[x,y_t]\ge \theta ~\hbox{\rm for all}~ \sigma r \le t\le r \bigr\} \right)}{\mu(\Ann rxk)} = 1\] 
holds for all $x\in \T(S)$ and $k>0$.
\end{definition}

We first observe that, for visual measures, the thickness property follows from the  ergodicity of the Teichm\"uller geodesic flow 
$\varphi_t$. 

\begin{proposition}[Thickness statistics for visual measures]
\label{prop:thick-vis}
Let $\kappa_x$ denote either of the visual measures $s_x$ or $\vismeas$ on $\QD^1(x)$. For all $0 < \theta < 1$ there exists $\epsilon > 0$ such that for all $x\in \T(S)$ we have
\[\lim_{R_0\to \infty} \kappa_x \left(\bigl\{q\in \QD^1(x) : \thickfrac_{\epsilon}[x,\pi(\varphi_r(q))]\ge \theta ~\hbox{\rm for all}~  r>R_0  \bigr\} \right) = 1.\]
\end{proposition}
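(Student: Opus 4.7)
The plan is to apply the Birkhoff ergodic theorem to the Teichm\"uller geodesic flow $\varphi_t$ on $\QD^1(S)/\Mod(S)$, which is ergodic with respect to the finite Masur--Veech measure $\muhol$ (Masur, Veech). The key observation is that because the flow moves at unit Teichm\"uller speed, the thick-stat is literally the Birkhoff time average of the indicator of $\widetilde{\T_\epsilon} := \pi^{-1}(\T_\epsilon) \subset \QD^1(S)$:
\[ \thickfrac_\epsilon[x,\pi(\varphi_r q)] \;=\; \frac{1}{r}\int_0^r \chi_{\widetilde{\T_\epsilon}}(\varphi_s q)\,ds. \]
Hence the proposition will follow once we know this time average exceeds $\theta$ uniformly for large $r$, for $\kappa_x$-most $q$.

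First, I would choose the thickness parameter. The Masur--Veech measure of the complement of the $\epsilon$--thick part (a neighborhood of the cusps of moduli space) tends to $0$ as $\epsilon \to 0$, by the volume asymptotics of \cite{ABEM}, so we may pick $\epsilon$ small enough that the $\muhol$--proportion of $\widetilde{\T_\epsilon}$ strictly exceeds, say, $(1+\theta)/2$. Birkhoff applied to the flow on $\QD^1(S)/\Mod(S)$ then produces a $\muhol$--full measure set of orbits on which the displayed time average converges to this constant, and so eventually exceeds $\theta$ for all sufficiently large $r$.

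Second, I would transfer this from a statement about $\muhol$--a.e.\ $q$ to one about $\kappa_x$--a.e.\ $q \in \QD^1(x)$ for an \emph{arbitrary} fixed basepoint $x$. For $\kappa_x = s_x$, the disintegration $\muhol = \int s_x\,d\m(x)$ trivially yields the conclusion for $\m$--a.e.\ $x$; promoting this to \emph{every} $x$ is the delicate step and requires the equidistribution of fiberwise visual measures under the geodesic flow, in the sense of Eskin--Mirzakhani. For $\kappa_x = \vismeas$, the statement then transfers from $s_x$ because both measures are rotation-invariant on $\QD^1(x)$ and mutually absolutely continuous with comparable Radon--Nikodym derivatives.

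With those two ingredients in place, the proposition follows by monotone exhaustion: on the set of $\kappa_x$--typical Birkhoff-regular directions, the time average exceeds $\theta$ for all $r$ larger than some $R_0(q)$, so the set $\{q : \thickfrac_\epsilon[x,\pi(\varphi_r q)] \ge \theta \text{ for all } r > R_0\}$ increases to full $\kappa_x$--measure as $R_0 \to \infty$. The main obstacle is the second step---passing from an almost-everywhere statement in moduli space to one valid on every fiber $\QD^1(x)$---which is where Eskin--Mirzakhani's basepoint equidistribution results are essential and cannot be bypassed by bare disintegration.
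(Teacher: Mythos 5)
Your opening step matches the paper: choose $\epsilon$ so that the $\muhol$--measure of $\T_\epsilon / \Mod(S)$ is a large enough fraction of the whole, and invoke ergodicity (Birkhoff) of the Teichm\"uller flow to get the desired asymptotic thick-stat for $\muhol$--almost every $q$. The final monotone-exhaustion remark is also fine. The problem is the middle step, which is the heart of the matter.

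You correctly identify that passing from ``$\muhol$--a.e.\ $q \in \QD^1(S)$'' to ``$\kappa_x$--a.e.\ $q \in \QD^1(x)$ for \emph{every} fixed $x$'' is the real content, but the tool you reach for---equidistribution of the pushforward $(\varphi_T)_* \kappa_x$ toward $\muhol$ in the sense of Eskin--Mirzakhani---does not do this job. Equidistribution of visual measures is a weak-$*$ statement about sphere averages: it says $\int f \, d(\varphi_T)_* \kappa_x \to \int f\, d\muhol$ for fixed continuous $f$. It does not give you almost-everywhere convergence of the \emph{Birkhoff time averages} $\frac{1}{T}\int_0^T f(\varphi_s q)\,ds$ along individual rays for $\kappa_x$--a.e.\ $q$, which is what you need and which concerns a different limit. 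There is no maximal inequality or similar bridge supplied, so this step is a genuine gap rather than a routine citation.

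The paper closes this gap with a more structural argument that is worth internalizing: (i) by Masur's unique ergodicity theorem, $\muhol$--a.e.\ $q$ has a uniquely ergodic vertical foliation; (ii) any two quadratic differentials with the same uniquely ergodic vertical foliation are \emph{forward asymptotic}, so the long-run thick-stat of the ray depends only on the vertical foliation $F\in\MF$; (iii) the Birkhoff conclusion thus becomes a statement about $\thumeas$--a.e.\ $F\in\MF$; and (iv) the map $\QD^1(x)\to\MF$ sending $q$ to its vertical foliation is smooth off the multiple-zero locus, hence absolutely continuous with respect to $\kappa_x$ and $\thumeas$, so the exceptional set pulls back to a $\kappa_x$--null set in \emph{every} fiber $\QD^1(x)$. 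This handles both $\kappa_x = s_x$ and $\kappa_x = \nu_x$ at once, without needing any comparability of Radon--Nikodym derivatives between them (a claim you make that the paper does not establish and that is not obviously true near the multiple-zero locus---absolute continuity alone suffices). If you want to salvage your outline, replace the equidistribution citation with the unique-ergodicity/forward-asymptoticity argument; that is the mechanism that actually transports the full-measure statement across fibers.
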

\begin{proof}
Choose $\epsilon>0$ sufficiently small so that the proportion of the $\m$--volume of moduli space that is $\epsilon$--thick is larger than $\theta$; that is, so that
\[\m\bigl(\T_\epsilon/\Mod(S)\bigr) > \theta\cdot \m\bigl(\T(S)/\Mod(S)\bigr).\]
By the ergodicity of the geodesic flow \cite{M}, it follows that the geodesic ray determined by $\muhol$--almost every $q\in \QD^1(S)$ spends more than proportion $\theta$  of its time   
in $\T_{\epsilon}$, asymptotically. 
The vertical foliation of each such $q$ is uniquely ergodic \cite{M2}.  If 
 two quadratic differentials have the same vertical uniquely ergodic measured foliation then they are 
 forwards asymptotic \cite{M1}.
 We conclude that almost every measured foliation $F\in \MF$ 
(with respect to Thurston measure $\thumeas$)  has the property that any quadratic differential $q$ with 
vertical foliation $F$ satisfies $\lim_{r\to\infty}\thickfrac_\epsilon[\pi(q),\pi(\varphi_r(q))]\geq\theta$.

The map  $\qd(x)\to \MF$ which assigns to $q$ its vertical foliation is a smooth map off the 
multiple zero locus, so it is smooth on  a set of full measure.  
Thus it is absolutely continuous with respect to the measures $\kappa_x$ and $\thumeas$.
Thus the property of asymptotically spending at least proportion $\theta$  of the time in $\T_\epsilon$ holds for $\kappa_x$--almost every $q\in \qd(x)$. This means that for each $x\in\T(S)$ the quantity
\[\kappa_x \left(\bigl\{q\in \QD^1(x) : \thickfrac_{\epsilon}[x,\pi(\varphi_r(q))]\ge \theta ~\hbox{\rm for all}~  r>R_0  \bigr\} \right)\]
increases to $1$ as the threshold $R_0$ tends to infinity.
\end{proof}

Since visual measures on $\T(S)$ are obtained by integrating the above measures on spheres, we immediately obtain the thickness property for visual measures.

\begin{corollary}\label{cor:thickness_for_vis}
The thickness property {\rm (P1)} holds for  $\vis(s_x)$ and $\vis(\vismeas)$.
\end{corollary}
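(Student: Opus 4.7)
The plan is to deduce the annular version of (P1) directly from Proposition~\ref{prop:thick-vis} by unwinding the definition of $\vis(\kappa_x)$ and performing a Fubini-style computation in polar coordinates. The key observation is that the thickness condition appearing in (P1) depends only on the \emph{direction} $q\in\QD^1(x)$ of the ray from $x$, not on how far out along that ray the point $y$ lies. This will make the radial integration drop out of the ratio.

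More precisely, using the polar coordinate homeomorphism $\Psi_x\colon\QD^1(x)\times(0,\infty)\to\T(S)\setminus\{x\}$, a point $y=\Psi_x(q,\rho)$ in the annular shell $\Ann{r}{x}{k}$ corresponds to a pair $(q,\rho)$ with $\rho\in[r-k,r]$, and the time--$t$ point $y_t$ equals $\Psi_x(q,t)=\pi(\varphi_t(q))$. Thus the set appearing in (P1) pulls back under $\Psi_x$ to the product
\[
F_\theta(r,\sigma) \times [r-k,r], \qquad
F_\theta(r,\sigma) := \bigl\{q\in\QD^1(x) : \thickfrac_\epsilon[x,\pi(\varphi_t(q))]\geq\theta \text{ for all } \sigma r\leq t\leq r\bigr\}.
\]
By the definition $\vis(\kappa_x)=(\Psi_x)_*(\kappa_x\times e^{ht}\,d\lambda(t))$, and since the indicator of $F_\theta(r,\sigma)\times[r-k,r]$ separates into a function of $q$ times a function of $t$, Fubini gives
\[
\frac{\vis(\kappa_x)\bigl(\{y\in\Ann{r}{x}{k}:\thickfrac_\epsilon[x,y_t]\ge\theta\;\forall\sigma r\le t\le r\}\bigr)}{\vis(\kappa_x)(\Ann{r}{x}{k})}
=\frac{\kappa_x\bigl(F_\theta(r,\sigma)\bigr)\cdot\int_{r-k}^{r}e^{ht}\,dt}{\kappa_x\bigl(\QD^1(x)\bigr)\cdot\int_{r-k}^{r}e^{ht}\,dt}
=\kappa_x\bigl(F_\theta(r,\sigma)\bigr),
\]
where the last equality uses that $\kappa_x$ is a probability measure on $\QD^1(x)$ (for $\vismeas$ by the normalization $\nu_x(B_x^*)=1$, and for $s_x$ because it arises by disintegration of $\muhol$).

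Given $\theta\in(0,1)$, choose $\epsilon>0$ as provided by Proposition~\ref{prop:thick-vis}, so that
\[
\lim_{R_0\to\infty}\kappa_x\bigl(\{q\in\QD^1(x) : \thickfrac_\epsilon[x,\pi(\varphi_r(q))]\ge\theta\text{ for all } r>R_0\}\bigr) = 1.
\]
Denoting this latter set by $G_\theta(R_0)$, the definition of $F_\theta(r,\sigma)$ makes clear that $F_\theta(r,\sigma)\supseteq G_\theta(\sigma r)$, since $[\sigma r,r]\subset(R_0,\infty)$ whenever $\sigma r\ge R_0$. Because $\sigma>0$, we have $\sigma r\to\infty$ as $r\to\infty$, so $\kappa_x(F_\theta(r,\sigma))\to 1$. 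Combined with the displayed ratio, this yields (P1) for $\vis(\kappa_x)$, for $\kappa_x=s_x$ or $\vismeas$, completing the proof.

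There is no substantive obstacle here; the whole content is really in Proposition~\ref{prop:thick-vis}, and the only thing to verify carefully is that the radial weight $e^{ht}$ cancels, which is immediate from the product structure of the good set. One minor bookkeeping point is ensuring the choice of $\epsilon$ in the proposition is independent of $x$, $k$ and $\sigma$; it is, since the proposition produces $\epsilon$ from $\theta$ alone.
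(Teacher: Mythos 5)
Your proof is correct and takes essentially the same approach as the paper's own one-sentence argument, which simply observes that $\vis(\kappa_x)$ is obtained by radially integrating $\kappa_x$ and so the conclusion of Proposition~\ref{prop:thick-vis} immediately transfers. The details you supply---that the thickness condition in (P1) depends only on the direction $q$, giving a product structure that makes the radial weight $e^{ht}$ cancel in the Fubini computation---are precisely what justifies the paper's ``immediately obtain.''
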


\subsection{Random walks}
\label{sec:random_walks}

We next verify (P1) for $\m$, which is considerably more involved. 
The proof uses ideas of Eskin and Mirzakhani on discretizing geodesics into sample paths of a random walk.
We begin the setup by combining some results on the volume of balls from Athreya--Bufetov--Eskin--Mirzakhani \cite{ABEM} and Eskin--Mirzakhani \cite{EM}.

\begin{lemma}[{Volume of balls \cite[Theorem 1.2]{ABEM}, \cite[Lemma 3.1]{EM}}]
\label{lem:ballVolumes}
There exists a constant $\ccc > 0$ (depending only on the topology of $S$) such that $\m(\Ball{\ccc}{y})\emul 1$ for all $y\in \T(S)$. Additionally, given $\epsilon >0$ we have $\m(\Ball{r}{y})\lmul_{\epsilon} e^{hr}$ for all $y\in \T_\epsilon$. 
\end{lemma}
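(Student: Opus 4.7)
The plan is to recognize that both bullets are direct quotations of results already in the literature, so the task is to sketch why each cited theorem yields what is stated. For the first bullet, I would invoke \cite[Theorem~1.2]{ABEM}. The reason a uniform radius $\ccc$ should exist is that $\m$ is $\Mod(S)$--invariant, so it suffices to control $\m(\Ball{\ccc}{y})$ as $y$ ranges over a fundamental domain for the $\Mod(S)$ action. On the thick part of moduli space this is immediate by a local compactness argument, because $\m$ has a continuous density in the natural smooth coordinates on $\QD(S)$. Near the cusps I would use Minsky's product region theorem: the thin region about a short curve $\alpha$ is modelled (up to additive error in $d_\T$) on a product of a horoball in $\H^2$, coming from the (length, twist) coordinates of $\alpha$, with the lower-complexity Teichm\"uller space of $S\setminus\alpha$. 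In holonomy coordinates $\m$ has a compatible product form, and one can then run the same reasoning inductively on the topological complexity of $S$ to conclude that small balls retain comparable mass no matter how far into an iterated thin region $y$ is pushed.

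For the second bullet, I would quote \cite[Lemma~3.1]{EM}. The underlying idea is to parametrize $\Ball{r}{y}$ by the polar coordinates $\Psi_y\colon \QD^1(y)\times(0,\infty)\to \T(S)\setminus\{y\}$ introduced in \S\ref{vismeas} and to disintegrate $\m$ along the Teichm\"uller flow. Since the conditional measure $s_y$ on $\QD(y)$ is precisely the disintegration of $\muhol$ and the radial Jacobian of the exponential map grows like $e^{h\rho}$, the total $\m$--volume is controlled by something of the form
\[ \m(\Ball{r}{y}) \,\lmul\, \int_0^r e^{h\rho} s_y\bigl(\QD^1(y)\bigr)\,d\rho.\]
The key input is that for $y\in\T_\epsilon$ the mass $s_y(\QD^1(y))$ is bounded above by a constant depending only on $\epsilon$, which is exactly the content of the Eskin--Mirzakhani estimate and follows from uniform bounds on the density of $\muhol$ over the thick part. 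Integrating then gives the bound $\lmul_\epsilon e^{hr}$.

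The subtlety I expect to be most delicate lies in the first bullet: verifying that the multiplicative comparison constant is genuinely universal and does not degenerate as $y$ is taken deep into a nested thin region corresponding to several simultaneously short curves. Here the inductive product-region argument must be combined carefully with $\Mod(S)$--invariance, and it is essentially this combination that \cite{ABEM} carries out.
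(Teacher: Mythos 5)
Your citations are swapped relative to how the paper actually deploys them: the paper attributes the first bullet (uniform mass of $\ccc$--balls) directly to Lemma~3.1 of Eskin--Mirzakhani, and derives the second bullet from the ABEM volume asymptotics (Theorem~\ref{thm:volumeBalls} in the paper). Since the lemma header cites both sources this is a minor slip, but it affects how you set up the argument.

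The more substantive problem is in your treatment of the second bullet. The measure $s_y$ is by construction the disintegration of $\muhol$ along the fibers of the bundle projection $\pi\colon\QD(S)\to\T(S)$, not along the radial foliation emanating from $y$; pushing $\m|_{\Ball{r}{y}}$ forward under $\Psi_y^{-1}$ does \emph{not} yield $s_y\times e^{h\rho}\,d\rho$, and there is no pointwise $e^{h\rho}$ Jacobian for the Teichm\"uller exponential map. The displayed inequality $\m(\Ball{r}{y})\lmul\int_0^r e^{h\rho}\,s_y\bigl(\QD^1(y)\bigr)\,d\rho$ is therefore unsupported; the $e^{hr}$ growth rate is exactly the content of the ABEM asymptotics and cannot be read off from a local Jacobian computation. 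What you have instead sketched is essentially the growth rate of the visual measure $\vis(s_y)$, which is true by construction but is a different measure than $\m$. The paper's actual argument is much softer: fix one $x\in\T_\epsilon$, apply the volume asymptotics to get $\m(\Ball{r}{x})\lmul_\epsilon e^{hr}$ for $r\geq R_0$, enlarge $R_0$ so that $\Mod(S)$--translates of $\Ball{R_0}{x}$ cover $\T_\epsilon$ (using compactness of $\T_\epsilon/\Mod(S)$), and then for any $y\in\T_\epsilon$ observe that $\Ball{r}{y}\subset\Ball{r+R_0}{x'}$ for some translate $x'$ of $x$, so $\Mod(S)$--invariance of $\m$ gives $\m(\Ball{r}{y})\leq\m(\Ball{r+R_0}{x})\lmul_\epsilon e^{hr}$. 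You should replace the disintegration sketch with this covering argument.
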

\begin{proof}
The first claim is exactly Lemma 3.1 of \cite{EM}. For the second claim, choose a point $x\in \T_{\epsilon}$. Note that our choice of $x$ depends only on $\epsilon$. By the volume asymptotics (Theorem~\ref{thm:volumeBalls}), there exists constant $R_0$ such that
$$\m(\Ball{r}{x})\lmul_{\epsilon} e^{hr}$$
for all $r\geq R_0$. Furthermore, by increasing $R_0$ if necessary, we may assume that the $\Mod(S)$--translates of $\Ball{R_0}{x}$ cover $\T_{\epsilon}$. It follows that for any $y\in \T_{\epsilon}$ and any $r\geq 0$ we have
\[\m(\Ball{r}{y}) \leq \m(\Ball{r+R_0}{x'}) = \m(\Ball{r+R_0}{x}) \lmul_\epsilon e^{hR_0}e^{hr}\]
for some $\Mod(S)$--translate $x'$ of $x$. This establishes the second claim.
\end{proof}

To define a random walk on $\T(S)$ with basepoint $x$, first choose a net $\N$ of points in $\T(S)$, choosing so that $x\in\N$ and such that the net points are $\ccc$--separated and $(2\ccc)$--dense (i.e., the distances between net points are at least $\ccc$ but the $(2\ccc)$--balls about net points cover \Teich space).  Here $\ccc$ is the constant from Lemma~\ref{lem:ballVolumes}, depending only on the topology of $S$.

Given a parameter  $\tau$, 
a {\em sample path of length $s$} (starting at $x$) is a
map
\[\lambda\colon \{0,\ldots, \floor{s/\tau}\}\to \N\]
such that $\lambda_0=x$ and for each index, $d_\T(\lambda_k,\lambda_{k+1})\le \tau.$
Let $\P_\tau^x(s)$ be the set of sample paths $\lambda$ starting at $x$ of length at most $s$, and let $\P_\tau^x$ be the set of all sample paths of any length. By (36) of \cite{EM},  for any 
$\delta>0$ and sufficiently large $\tau$ (depending on $\delta$) we have, $$|\P_\tau^x(s)|\leq e^{s(h+\delta)}$$ 
for all $s\geq 0$. (Note that the constant $C_2$ in (36) of \cite{EM}, coming from \cite[Proposition 4.5]{EM}, can be taken to equal 1.)

Now given $\tau$ we  define a map $F_\tau\colon \T(S) \to \P_\tau^x$ which takes a point $y$ and ``discretizes'' the geodesic $[x,y]$ to a sample path. For any  $[x,y]$ we mark off points along the geodesic starting at $x$ and spaced by time $\tau-2\ccc$. For each such point along the geodesic we choose a nearest point in $\N$.  This is the sample path  $F_\tau(y)$  associated to $y$.
For any $\epsilon_1>0$, we can choose  $\tau$  sufficiently large so that the image under $F_\tau$ of the ball $\Ball rx$ is contained in $\P_\tau^x(r(1+\epsilon_1))$. Furthermore, after increasing $\tau$ if necessary, the above estimate on the cardinality of $\P_\tau^x(s)$ shows that we additionally have 
\[\abs{\P_\tau^x(r(1+\epsilon_1))}\leq e^{hr(1+2\epsilon_1)}\]
for all $r$. We are now ready to show that the measure of points determining a ray with a too-large thick-stat decays exponentially in $r$.

\begin{theorem}[Thickness estimate for holonomy measure]
\label{thm:thick-hol}
For all
$0 < \theta,\sigma < 1$, $x\in\T(S)$, and $k>0$, there exist $\epsilon=\epsilon(\theta,\sigma)  >0$ and $\alpha>0$ such that 
 $$\frac{\m \left( \bigl\{y\in \Ann rxk : \thickfrac_{\epsilon  }[x,y_t]< \theta ~\hbox{\rm for some}~ t\in[\sigma r,r] \bigr\}   \right)}{\m(\Ann rxk)}  
 < e^{-\alpha r}$$ 
for all sufficiently large $r$, 
where $y_t$ is the time--$t$ point on the geodesic ray from $x$  through $y$.
\end{theorem}

\begin{proof}
We start with a geodesic $[x,y]$ for $y\in \Ann rxk$. 
As in the above discussion, for  any $\epsilon_1<1$, we can choose $\tau\geq \frac{4\ccc}{\epsilon_1}$  so that the geodesic determines a sample path $\lambda$ in $\P_\tau^x(r(1+\epsilon_1))$. Let $F_\tau\colon \Ann rxk\to P_\tau$ be the map carrying $y$ to the sample path associated to $[x,y]$.
In Appendix~\ref{sec:thickness_for_random_walks} we give a self-contained  proof of the following statement (Theorem~\ref{thm:percentage}) which is indicated in the proof of Theorem 5.1 of \cite{EM}:
given $\theta$, there exists $\delta'>0$ such that for all large $\tau$ there is an $\epsilon'$ so that for each $\tau \leq t \leq r$, the estimate
\[\frac{1}{\lfloor t/\tau \rfloor}\left|\bigl\{1\leq i\leq \lfloor t/\tau \rfloor: \lambda_i\in \T_{\epsilon'}\bigr\}\right|\geq \theta\]
holds for all but at most
$$e^{-\delta' t}e^{hr(1+2\epsilon_1)}$$
of the sample paths $\lambda\in \P_\tau^x(r(1+\epsilon_1))$. This is the crucial ingredient that lets us control our thickness statistic. For the given $\sigma$, we now choose $\epsilon_1$ small enough (forcing $\tau$ to be large) so that $\kappa:=\frac{\delta'\sigma}h-2\epsilon_1>0$. In particular, note that $\tau$ and $\delta'$ depend only on $\theta$ and $\sigma$. If we let $\Omega\subset \P_\tau^x(r(1+\epsilon_1))$ denote the union of these exceptional sample paths corresponding to any $\sigma r \leq t\leq r$, it follows that
\[\abs{\Omega} \leq \sum_{k=0}^\infty e^{-\delta'(\sigma r + k\tau)} e^{hr(1+2\epsilon_1)}\lmul_{\theta,\sigma} e^{-\delta'\sigma r}e^{hr(1+2\epsilon_1)} = e^{hr(1-\kappa)}.\] 

We know that the ball of radius $\ccc$ centered at any point has $\m$--measure $O(1)$
by Lemma~\ref{lem:ballVolumes}. 
Consequently, when $r$ is sufficiently large we have
\[\m(F_\tau^{-1}(\Omega))\lmul|\Omega|\lmul_{\theta,\sigma} e^{hr}e^{-\kappa h r}  \leq\m(\Ann rxk) \cdot e^{-\alpha r},\]
for a suitable choice of $\alpha$.
We conclude that for all   $y\in \Ann rxk$ except for a set of at most this  measure, the  sample path associated to the geodesic from $x$ has to $y$ has $\thickfrac_{\epsilon'}\ge \theta$ for ending times $t\geq \sigma r$. 
Now every point on the geodesic is within distance $\tau$ of a point on the sample path. Let $\epsilon=\epsilon' e^{-\tau}$. A point at distance at most $\tau$ from a point in $\T_{\epsilon'}$ lies in $\T_{\epsilon}$. This concludes the proof. 
\end{proof}

This says that, except for set of endpoints $y$ of exponentially small measure, geodesics have the property that they eventually have spent a definite fraction of their time in the thick part. In particular, this implies the following.

\begin{corollary}\label{cor:thickness_for_hol}
The measures $\m$ and $\hausmeas$ satisfy the thickness property {\rm (P1)}.
\end{corollary}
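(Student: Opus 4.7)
My plan is that this corollary is essentially an immediate consequence of Theorem~\ref{thm:thick-hol} together with the measure comparison in Corollary~\ref{haus-vs-hol}, so the real work has already been done. The reformulation needed is that property (P1) asserts that the ``good'' set (rays with a definite thickness fraction past time $\sigma r$) has relative measure tending to $1$, while Theorem~\ref{thm:thick-hol} is phrased in terms of the ``bad'' complementary set having exponentially small relative $\m$--measure. These two statements are obviously equivalent for $\m$.

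For the $\hausmeas$ case, I would fix $\theta, \sigma, x, k$, let $\epsilon = \epsilon(\theta)$ and $\alpha$ be as in Theorem~\ref{thm:thick-hol}, and write
\[E_r = \bigl\{y\in \Ann{r}{x}{k} : \thickfrac_{\epsilon}[x,y_t] < \theta \text{ for some } t\in[\sigma r, r]\bigr\}\]
for the exceptional set. Invoking the sandwich $k_1 \m \le \hausmeas \le k_2 \m$ from Corollary~\ref{haus-vs-hol}, I would estimate
\[\frac{\hausmeas(E_r)}{\hausmeas(\Ann{r}{x}{k})} \le \frac{k_2}{k_1}\cdot \frac{\m(E_r)}{\m(\Ann{r}{x}{k})} < \frac{k_2}{k_1}\, e^{-\alpha r},\]
which tends to $0$ as $r\to\infty$. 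Passing to the complement in $\Ann{r}{x}{k}$ then yields the limit~$=1$ statement of (P1) for $\hausmeas$, with the same choice of $\epsilon$ that worked for $\m$.

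Since every step is a direct quotation of an earlier result, there is no substantive obstacle; the only thing worth being careful about is that the constant $\epsilon$ in (P1) is required to depend only on $\theta$, which matches the conclusion of Theorem~\ref{thm:thick-hol}, whereas the decay rate $\alpha$ may depend on $\sigma$, $x$, $k$ (this is harmless since we only need $e^{-\alpha r}\to 0$).
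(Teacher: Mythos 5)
Your proposal is correct and matches the paper's intended (implicit) argument: the corollary is stated as an immediate consequence of Theorem~\ref{thm:thick-hol} together with the comparison $k_1\m\le\hausmeas\le k_2\m$ from Corollary~\ref{haus-vs-hol}, exactly as you spell out. You also correctly flag the only point requiring care, namely that the $\epsilon$ produced by Theorem~\ref{thm:thick-hol} depends only on $\theta$, which is what the quantifier order in Property (P1) demands.
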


We will again use random walks to now show that a typical geodesic has a long interval where it stays in the thick part.  
Specifically we say that a geodesic segment $[x,y]$ contains an $\epsilon$--{\em thick interval} $I$ if 
there is a geodesic subsegment $I$  along  $[x,y]$ such that $I\subset \T_\epsilon$.

\begin{theorem}[Thick intervals]
\label{thm:interval}
For all $0<\sigma<1$, $M>0$, and sufficiently small $\epsilon$, 
there exists $\beta>0$ such that for all sufficiently large $r$, 
 $$\frac{\m \left( \bigl\{y\in \Ann rxk : [y_{\sigma r},y_{2\sigma r}]\text{
contains no  $\epsilon$--thick interval of length } M\bigr\}\right)}{ \m(\Ann rxk)}<
 e^{-\beta r},$$
where $y_t$ is the time--$t$ point on the geodesic ray from $x$ through $y$.
\end{theorem}

\begin{proof}
For small  $\epsilon_1$ define $\tau$ as in  Theorem~\ref{thm:thick-hol}.  Fix some $0<\theta<1$ by Theorem~\ref{thm:thick-hol}, for sufficiently small $\epsilon'$,  except for an exponentially small set of paths the interval $[y_{\sigma r},y_{2\sigma r}]$ contains a proportion $\theta$ of $\epsilon'$--thick points.  
There exists $\rho>0$ such that  for any net point $y\in \T_{\epsilon'}$, the 
probability that the next step in the random walk starting at $y$ remains in  
$\T_{\epsilon'}$ is at least $\rho$.

Let 
$\kappa:=1-\rho^{M/\tau}<1$.  Then given a point $y\in \T_{\epsilon'}$,  the probability that in the next $\lfloor M/\tau \rfloor$ steps in the 
random walk (that is, the sample path of length $M$) starting at $y$ 
at least one of the points is $\epsilon'$--thin is at most $\kappa$.

Consider a sample path  of length $L=\sigma r$ (so having $\lfloor L/\tau \rfloor$ points)
for which at least $\theta$ proportion of its points are  in $\T_{\epsilon'}$.  Divide the path  into subpaths with  $\lfloor M/\tau \rfloor$ points, 
called {\em pieces},
so that the number of pieces is additively close to $L/M$ if $M\gg \tau$.  
Let $J_1$ be the collection of odd-index pieces and $J_2$ the collection of even-index pieces. 
Suppose without loss of generality that $J_1$ contains at least as many pieces with an $\epsilon'$--thick point as $J_2$ does.
The number of  $J_1$ pieces containing a  point in $\T_{\epsilon'}$ is then at least   $N:=\frac{L\theta}{2 M}$.
The probability that the sample path of length $M$  starting at an $\epsilon'$ thick point enters the $\epsilon'$--thin part is at most $\kappa$, as we have seen. 
If two thick starting points are in different $J_1$ pieces, then these events are independent,  because between the two points  is a $J_2$ piece of length $M$.    
Thus the probability that the random path does not have any pieces  of length $M$ that lie entirely in $\T_{\epsilon'}$ is at most  $\kappa^N$.

Now as in the discussion in the previous theorem, we have the map $F_\tau\colon\Ann rxk \to\P_\tau^x(r(1+\epsilon_1))$.  Let $y\in \Ann rxk$.
We consider the segment $[y_{\sigma r},y_{2\sigma r}]\subset [x,y]$.  
Its image under $F_\tau$ is a path of length $L=\sigma r$. 
The probability that this random path fails to have the desired segment of length $M$ in the $\epsilon'$--thick part is at most 
$\kappa^{\textstyle\nicefrac{\sigma r\theta}{2M}}$,
by the statement at the end of the last paragraph.  Since $\kappa$ is a fixed number smaller than  $1$, 
we can find an upper bound for this proportion of the form  $e^{-\beta r}$ for some $\beta>0$.  As we saw in the proof of the last theorem,
since this property holds for an exponentially small proportion of sample paths,  the corresponding property holds for an 
exponentially small proportion of endpoints $y\in \Ann rxk$.

Let $\epsilon=e^{-\tau}\epsilon'$.
Note again that if a point on the  random path is $\epsilon'$--thick, then the corresponding point on the geodesic $[x,y]$ is  $\epsilon$--thick. 
We conclude, as in the last theorem, that the  measure of the set of points $y\in \Ann rxk$ such that the geodesic $[x,y]$ does not have an $\epsilon$--thick interval of length $M$ in $[\sigma r,2\sigma r]$ is at most $e^{-\beta r}$.
\end{proof}

\section{Separation statistics for pairs of rays}
\label{s:fellow-trav-stat}

We need one final ingredient before proving Theorems~\ref{thm:balls} and \ref{thm:spheres}. Namely, in order to apply Theorem~\ref{thm:partiallyThickIntervalsAreClose} to show that the geodesic $[y,z]$ connecting two generic points $y,z\in \Ball rx$ must ``dip back'' towards $x$, we must first know that $[x,y]$ and $[x,z]$ become $C$--separated, where $C$ is the constant from Theorem~\ref{thm:partiallyThickIntervalsAreClose}. Thus we need an estimate for the probability that two geodesic rays based at $x$ fellow-travel past a given radius. The appropriate sort of control is ensured by the following property.

\begin{definition}[Property P2]
We say a measure $\mu$ on $\T(S)$ satisfies the 
{\em separation property} (P2) if for all $M_0,k>0$, $0<\sigma < 1$, 
and  $x\in \T(S)$, we have
\[\lim_{r\to\infty} \frac{\mu\times\mu \left( \bigl\{(y,z) \in \Ann rxk \times \Ann rxk : d_\T(y_t, z_t) \geq M_0 ~\text{for all}~t\in [\sigma r,r]\bigr\}\right)}
{\mu\times \mu \bigl( \Ann rxk \times \Ann rxk \bigr)} = 1.\]
\end{definition}

We will also consider the following time-specific version this separation property:

\begin{definition}[Property P3]
We say that a measure $\mu$ on $\T(S)$ satisfies the \emph{strong separation property} (P3), or has exponential decay of fellow travelers, 
if for all $x\in \T(S)$, $M_0,k > 0$, $0 < \sigma <1$, there exist $\alpha,R_0 > 0$
such that
\[\frac{\mu\times\mu \left( \bigl\{(y,z)\in \Ann rxk\times \Ann rxk :  d_\T(y_t,z_t) < M_0  \bigr\}\right)}{\mu\times \mu \bigl( \Ann rxk \times \Ann rxk \bigr)}
 \leq e^{-\alpha t},\]
whenever $R_0 \leq \sigma r \leq t \leq r$.
\end{definition}

\begin{theorem}[Strong separation for visual measures]\label{thm:exp-decay-vis}
All rotation-invariant visual measures $\mu_x = \vis(\kappa_x)$ on $\T(S)$, and in particular  $\vis(\vismeas)$ and $\vis(s_x)$,   
have the strong separation property {\rm (P3)}.
\end{theorem}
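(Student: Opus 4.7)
The plan is to exploit the polar-coordinate structure of visual measures to reduce the question to a bound on the set of initial-direction pairs whose Teichm\"uller rays remain close at time $t$, then use the exponential expansion of the Teichm\"uller flow together with the thickness property (P1) to establish that bound.

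Via the polar-coordinate homeomorphism $\Psi_x$, write $y=\Psi_x(q_1,s_1)$ and $z=\Psi_x(q_2,s_2)$, so that the time--$t$ points are $y_t=\Phi_t(q_1)$ and $z_t=\Phi_t(q_2)$, where $\Phi_t := \pi\circ\varphi_t\colon\QD^1(x)\to\T(S)$. Since $\vis(\kappa_x) = \kappa_x\otimes e^{hs}\,ds$ in these coordinates and the fellow-traveling condition $d_\T(y_t,z_t)<M_0$ depends only on $(q_1,q_2)$, the radial integrals in the numerator and denominator cancel, and (P3) reduces to proving
\[ \kappa_x\times\kappa_x(F_t) \;\lmul\; e^{-\alpha t}\,\kappa_x(\QD^1(x))^2, \qquad F_t := \{(q_1,q_2) : d_\T(\Phi_t(q_1),\Phi_t(q_2))<M_0\}. \]

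By Fubini, it suffices to bound the slice $\kappa_x(F_t(q_1)) = \kappa_x(\Phi_t^{-1}(\Ball{M_0}{w}))$ for $w=\Phi_t(q_1)$. A shadow calculation in polar coordinates---every point of $\Ball{M_0}{w}$ has radial coordinate within $M_0$ of $t$ and direction lying in an enlarged slice---gives the key estimate
\[ \kappa_x(F_t(q_1)) \;\lmul\; e^{-ht}\,\vis(\kappa_x)(\Ball{2M_0}{w}). \]
The real work is to uniformly bound $\vis(\kappa_x)(\Ball{2M_0}{w})$ when $w$ lies in an $\epsilon$--thick part. For $w\in\T_\epsilon$, Lemma~\ref{lem:ballVolumes} gives $\m(\Ball{2M_0}{w})\lmul_\epsilon 1$; rotation-invariance of $\kappa_x$ together with the equidistribution results of \cite{ABEM}, which show that the normalized sphere measures $\vis_r(\kappa_x)/\vis_r(\kappa_x)(\Sph{r}{x})$ weakly equidistribute to a measure in the class of $\m$, transfers this comparison to $\vis(\kappa_x)(\Ball{2M_0}{w})\lmul_\epsilon 1$. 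Hence $\kappa_x(F_t(q_1))\lmul_\epsilon e^{-ht}$ whenever $\Phi_t(q_1)\in\T_\epsilon$.

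It remains to control the exceptional $q_1$ with $\Phi_t(q_1)$ thin. For this we run the random-walk/discretization argument of Theorem~\ref{thm:thick-hol} with $\m$ replaced by $\vis(\kappa_x)$: the exponential ball-volume growth provided by Corollary~\ref{cor:expGrowthBalls} still permits the counting estimate $|\P_\tau(s)|\leq e^{s(h+\delta)}$ to be applied in the same way, and the conclusion is the exponential bound $\kappa_x(\{q_1 : \Phi_t(q_1)\notin\T_\epsilon\})\lmul e^{-\alpha_1 t}$. Combining the two regimes yields $\kappa_x\times\kappa_x(F_t)\lmul e^{-\alpha t}\,\kappa_x(\QD^1(x))^2$ with $\alpha=\min\{\alpha_1,h\}$, which is precisely (P3). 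The main obstacle will be the uniform thick-ball bound on $\vis(\kappa_x)$; rotation-invariance is essential here, entering via the identification of $\kappa_x$ with natural measures on $\MF$ through the vertical-foliation map and thence with the descent of $\m$. The random-walk step itself is a clean port of the argument of \S\ref{s:thick-stat}.
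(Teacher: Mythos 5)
Your reduction to the slice estimate $\kappa_x(F_t(q_1))\lmul e^{-\alpha t}\kappa_x(\QD^1(x))$ is the same reduction the paper makes, but from there the paper takes a much shorter and more self-contained route that you miss entirely. After observing that any two $z,z'\in E_t$ (your $F_t(q_1)$) satisfy $d_\T(z,z')\leq 2M_0$, the paper restricts attention to a single $S^1$--orbit in $\QD^1(x)$: these directions sweep out a Teichm\"uller disk, which is an \emph{isometrically embedded copy of $\H^2$}. Elementary hyperbolic geometry in $\H^2$ then shows that for $t$ large compared to $M_0$, the fraction of each $S^1$--orbit landing in $E_t$ is $\lmul e^{-t}$. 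Rotation-invariance is used precisely (and only) to say that the disintegration of $\kappa_x$ along $S^1$--orbits is Lebesgue on each fiber, so Fubini over $\QD^1(x)/S^1$ transfers the per-orbit bound to the full measure. No thick/thin dichotomy, no shadow lemma, no equidistribution, and no random walk are needed.

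Your proposed route has two genuine gaps. First, the uniform bound $\vis(\kappa_x)(\Ball{2M_0}{w})\lmul_\epsilon 1$ for thick $w$ is not a consequence of the equidistribution results in \cite{ABEM} in the way you claim: weak-$*$ equidistribution of normalized sphere measures does not yield pointwise upper bounds on the measure of fixed-radius balls, and in any case the equidistribution results there are established for specific measures, not for arbitrary rotation-invariant $\kappa_x$. Second, your treatment of the exceptional set of thin $q_1$ by ``porting'' the random-walk argument of Theorem~\ref{thm:thick-hol} is circular: that argument crucially uses that small balls have uniformly bounded $\m$--measure (Lemma~\ref{lem:ballVolumes}), and the analogous statement for $\vis(\kappa_x)$ is exactly the thick-ball bound you have not established. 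Note also that the paper proves thickness (P1) for visual measures via ergodicity (Proposition~\ref{prop:thick-vis}), not via random walks, and that gives qualitative full-measure, not an exponential rate. The $S^1$/Teichm\"uller-disk argument sidesteps all of these issues.
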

\begin{proof}
Choose $\sigma r \leq t \leq r$, fix a point $y\in \Ann{r}{x}{k}$, and let $E = \{z\in \Ann rxk : d_\T(y_t,z_t) < M_0\}$.  Looking instead in the sphere $\Sph{t}{x}$, we have the set $E_t=\{z\in \Sph{t}{x} : d_\T(y_t,z) < M_0\}$. Notice that, by definition,
\[E = \bigcup_{s\in [r-t-k,r-t]}\hat{\varphi}_{s}(E_t).\]
(Recall that $\hat{\varphi}_s$ denotes the radial geodesic flow based at $x$.) Therefore, by the normalized invariance, we have
\begin{align*}
\mu(E) 
&= \int_{r-k}^r \vis_s(\kappa_x)(\hat{\varphi}_{s-t}(E_t)) \,d\lambda(s)\\
&= \int_{r-k}^r \vis_t(\kappa_x)(E_t) \frac{\vis_s(\kappa_x)(\Sph{s}{x})}{\vis_t(\kappa_x)(\Sph{t}{x})}\,d\lambda(s)\\
&= \frac{\kappa_x(E_t)}{\kappa_x(\QD^1(x))} \mu_x(\Ann{r}{x}k),
\end{align*}
where, in the last line, we have identified $E_t$ with its image in $\QD^1(x)\cong \Sph{t}{x}$. 

It remains to find $R_0$ (independent of $y$) such that $\kappa_x(E_t)/\kappa_x(\QD^1(x))\lmul e^{-t}$ when 
$t\geq R_0$. Recall that $S^1$ acts freely on 
$\QD^1(x)$ by rotations. Choosing orbit representatives, we may realize $\QD^1(x)$ as a setwise product 
$(\QD^1(x)/S^1)\times S^1$. The measure $\kappa_x$ pushes forward to a measure on $\QD^1(x)/S^1$. 
By disintegration, we then obtain a measure on each fiber $S^1$ which, by the rotation-invariance of $\kappa_x$, 
must agree with Lebesgue measure up to a scalar. For any two points $z,z'\in E_t$, the triangle inequality gives 
$d_\T(z,z')\leq 2 M_0$. Now suppose that $z$ and $z'$ lie in the same 
Teichm\"uller disk, meaning that the unit quadratic differentials associated to the geodesics $[x,z]$ and $[x,z']$ lie in the same $S^1$--orbit. Each Teichm\"uller disk is an isometrically embedded copy of the hyperbolic plane. Thus, when $t$ is large compared to $M_0$, hyperbolic geometry implies that the fraction of each $S^1$--orbit contained in $E_t$ is  $\lmul e^{-t}$. Using the product structure and integrating over the $\QD^1(x)/S^1$ factor, Fubini's theorem then implies that $\kappa_x(E_t)/\kappa_x(\qd^1(x)) \lmul e^{-t}$ as well.
\end{proof}

\begin{theorem}[Strong separation for holonomy measure]
\label{thm:exp-decay-hol}
The measure $\m$ satisfies the strong separation property {\rm (P3)}.
\end{theorem}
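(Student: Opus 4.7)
Our plan is to adapt the random walk discretization used in Theorems~\ref{thm:thick-hol} and~\ref{thm:interval} to a pair-count for fellow-traveling geodesic rays. Fix a $\ccc$-separated, $(2\ccc)$-dense net $\N\subset\T(S)$ with $x\in\N$, parameters $\tau$ and $\epsilon_1$ as in those proofs, and the discretization map $F_\tau\colon \T(S)\to\P_\tau$ sending $y$ to a sample path $\lambda_y$ with each $\lambda_y(n)$ within distance $\tau$ of $y_{n(\tau-2\ccc)}$. The crucial observation is that if $d_\T(y_t,z_t)<M_0$ and $n := \lfloor t/(\tau-2\ccc)\rfloor$, then $d_\T(\lambda_y(n),\lambda_z(n))\le D := M_0 + 2\tau$. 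Using $\m(F_\tau^{-1}(\lambda))\lmul 1$ for each sample path $\lambda$ and $\m(\Ann{r}{x}{k})\emul e^{hr}$, it suffices to bound the number of pairs $(\lambda_1,\lambda_2)\in\P_\tau(r(1+\epsilon_1))^2$ satisfying $d_\T(\lambda_1(n),\lambda_2(n))\le D$ by $\lmul e^{-\alpha t}\cdot e^{2hr}$ for $t\ge\sigma r$.

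To count such pairs, first apply Theorem~\ref{thm:thick-hol} to restrict to pairs for which $\lambda_1(n)$ and $\lambda_2(n)$ both lie in some thick part $\T_{\epsilon'}$; this incurs only an additional exceptional $\m\times\m$-measure of $\lmul e^{-\alpha_0 r}\cdot e^{2hr}$. For the surviving pairs, Lemma~\ref{lem:ballVolumes} bounds the number of net points in $\Ball{D}{\lambda_1(n)}$ by a constant $N_0=N_0(\epsilon',D)$. Fix $\lambda_1$ and split any competing $\lambda_2$ into (i) its first $n$ steps, a sample path from $x$ ending in $\Ball{D}{\lambda_1(n)}$, and (ii) an arbitrary extension of length $r(1+\epsilon_1)-n\tau$; the latter contributes at most $|\P_\tau(r(1+\epsilon_1)-n\tau)|\le e^{h(r-t)(1+2\epsilon_1)}$. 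An averaging argument using that there are $\emul e^{ht}$ thick net points in $\Ball{n\tau}{x}$ and that $|\P_\tau(n\tau)|\le e^{ht(1+2\epsilon_1)}$ yields $\lmul e^{2\epsilon_1 ht}$ first-$n$-step paths ending at each thick net point on average, hence a first-factor bound of $\lmul N_0\cdot e^{2\epsilon_1 ht}$ for most $\lambda_1$. Summing over $\lambda_1\in\P_\tau(r(1+\epsilon_1))$ gives a total bad-pair count of $\lmul N_0\cdot e^{2hr(1+2\epsilon_1)-ht}$; dividing by $e^{2hr}$ and choosing $\epsilon_1<\sigma/4$ yields the desired decay $e^{-\alpha t}$ for $\alpha := h(\sigma-4\epsilon_1)>0$ and all $\sigma r\le t\le r$.

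The principal obstacle will be converting the average bound on the number of first-$n$-step paths ending at a single thick net point into a bound that holds uniformly in $\lambda_1$: a priori a single ``attractor" net point could absorb disproportionately many paths and inflate the count for the $\lambda_1$'s whose $n$-th vertex lies nearby. To get around this, the plan is to either partition net points into light and heavy classes by path-count and absorb the heavy ones into an enlarged exceptional set (using that their total attracted mass is controlled by $|\P_\tau(n\tau)|\le e^{ht(1+2\epsilon_1)}$), or to bypass averaging entirely by invoking the Eskin--Mirzakhani exponential mixing estimate for the discretized geodesic flow from~\cite{EM} as a black box yielding exponential decay of the number of sample-path pairs staying close at step $n$. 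Once the uniform bound is secured, combining it with the exceptional contribution from Theorem~\ref{thm:thick-hol} completes the proof of the strong separation property for $\m$.
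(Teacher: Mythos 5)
Your approach is genuinely different from the paper's, and as you yourself flag, it contains an unresolved gap that is in fact the crux of a pair-counting strategy. An average of $\lmul e^{2\epsilon_1 ht}$ sample paths landing on each thick net point does not control pair counts, and neither proposed fix closes the hole. For the light/heavy split: defining heavy net points $v$ as those receiving $p_v > K e^{2\epsilon_1 ht}$ first-$n$-step paths yields only the cardinality bound $|\{\text{heavy }v\}|\leq e^{ht}/K$; the $\m\times\m$-measure of pairs $(y,z)$ with $\lambda_y(n)$ heavy is then $\lmul |\{\text{heavy }v\}|\cdot e^{h(r-t)}\cdot\m(\Ann rxk)\lmul \m(\Ann rxk)^2/K$, a constant-factor saving, not exponential decay in $t$. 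For the mixing black box: the paper proves and cites no statement of the needed form, and extracting from \cite{EM} a uniform exponential bound on the number of sample-path \emph{pairs} whose $n$-th vertices are $D$-close, valid for all $n\tau\in[\sigma r,r]$, would itself be a substantial independent lemma rather than a quotable fact.

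The paper avoids pair-counting altogether, and it is worth seeing why. Applying Theorem~\ref{thm:thick-hol} with $\theta=\frac12$ produces an exceptional set $E_r$ of $y$'s of proportional $\m$-measure $\lmul e^{-\alpha r}$, whose contribution to the pair measure is trivially acceptable. Now fix $y\notin E_r$ and $t\in[\sigma r,r]$. The thick-stat bound $\thickfrac_\epsilon[x,y_t]\geq\frac12$ supplies a time $t'\in[\delta t,t]$ (for any $\delta<\frac12$) with $y_{t'}\in\T_\epsilon$, and the triangle inequality places every $z$ with $d_\T(y_t,z_t)<M_0$ inside the ball $\Ball{r-\delta t+M_0}{y_{t'}}$ about the \emph{thick} center $y_{t'}$. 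Lemma~\ref{lem:ballVolumes} bounds its measure by $\lmul_\epsilon e^{hM_0}e^{hr}e^{-h\delta t}$; integrating over $y\notin E_r$ and dividing by $\m(\Ann rxk)^2\gmul e^{2hr}$ gives the desired decay directly, with no discretization of $z$ at all. The conceptual takeaway is that the random-walk model of \S\ref{s:thick-stat} is needed exactly once, to establish the thickness estimate for a single ray; once one knows the typical $y_t$ has a nearby thick point, separation of typical pairs reduces to a one-geodesic-at-a-time volume estimate, which is far more tractable than controlling the joint distribution of sample-path endpoints.
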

\begin{proof}For a given $r,t$, let
\[D_{r,t} = \{(y,z) : d_\T(y_t,z_t) < M_0\} \subset  \Ann{r}{x}{k}\times\Ann{r}{x}{k}    \]
denote the the set in question. Set $\theta = \nicefrac{1}{2}$ and choose $\epsilon$ and $\alpha > 0$ as in Theorem~\ref{thm:thick-hol} so that the $\m$--measure of the set
\[E_r = \bigl\{y : \thickfrac_{\epsilon}[x,y_t]< \theta ~\hbox{\rm for some}~ t\in[\sigma r,r]\bigr\} \subset \Ann{r}{x}{k} \]
is at most $\m(\Ann rxk)e^{-\alpha r}$ for all large $r$. We may now write $D_{r,t} = D'_{r,t}\cup D''_{r,t}$, where
\[D'_{r,t} = \{(y,z)\in D_{r,t} : y\in E_{r}\}\quad\text{and}\quad D''_{r,t}=\{(y,z)\in D_{r,t} : y\notin E_r\}.\]
By the above, we have that \begin{equation}
\label{measure1}
\frac{\m\times \m(D'_{r,t})}{\m(\Ann{r}{x}{k})^2} \leq \frac{\m(E_r)\m(\Ann rxk)}{\m(\Ann rxk)^2} \leq e^{-\alpha t}
\end{equation}
 for all large $r$ and all $t\leq r$.

Choose any point $y\in \Ann{r}{x}{k}\setminus E_r$ and fix some $t\in [\sigma r, r]$. Then any point $z\in \Ann{r}{x}{k}$ satisfying 
$d_\T(y_t,z_t) < M_0$  must be contained in the ball of radius $r-t+M_0$ about $y_t$. Since $t\geq \sigma r$ and 
$y\notin E_r$; we know that $\thickfrac_{\epsilon}[x,y_t]\geq \theta$. 
Choosing any $0 < \delta < \theta$, there must exist a time $t'\in [\delta t,t]$ for which $y_{t'}\in \T_{\epsilon}$. Furthermore, each such $z$ lies within the ball of radius 
$(1-\delta)t + M_0 + r-t$ about $y_{t'}$. Applying Lemma \ref{lem:ballVolumes}, we see that
\begin{equation}
\label{measure2}
\frac{\m\times\m(D''_{r,t})}{\m(\Ann{r}{x}{k})^2} \leq \frac{\m(\Ball{r-\delta t + M_0}{y_{t'}})}{\m(\Ann rxk)} \lmul_\epsilon \frac{e^{hM_0}e^{hr}e^{-h\delta t}}{\m(\Ann rxk)}.
\end{equation}
The Theorem  now follows from (\ref{measure1}), (\ref{measure2}) and the fact that, by Corollary~\ref{cor:expGrowthBalls}, we have  $\m(\Ann{r}{x}{k})\gmul_{x,k} e^{hr}$ for all large $r$.
\end{proof}

Finally, we see that for any measure enjoying exponential decay of fellow travelers, most pairs of geodesic rays are in fact \emph{never} near each other beyond some threshold.

\begin{proposition}\label{lem:sphereEstimate} 
The strong separation property {\rm (P3)} implies the separation property {\rm (P2)}.
\end{proposition}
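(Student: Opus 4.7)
The plan is to deduce (P2) from (P3) via a discretization-plus-union-bound argument. The key observation is that for fixed rays emanating from $x$, the function $t\mapsto d_\T(y_t,z_t)$ is $2$--Lipschitz: since $d_\T(y_t,y_{t'})=|t-t'|$ and likewise for $z$, the triangle inequality yields $|d_\T(y_t,z_t)-d_\T(y_{t'},z_{t'})|\leq 2|t-t'|$. Therefore to ensure that $d_\T(y_t,z_t)\geq M_0$ for every $t\in[\sigma r,r]$ it suffices to ensure $d_\T(y_{t_i},z_{t_i})\geq M_0+2\Delta$ at a $\Delta$--net $\{t_i\}$ of times covering $[\sigma r,r]$, which is a finite condition.

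First I would fix $\Delta>0$ (any value suffices; take $\Delta=1$) and set $M_0':=M_0+2\Delta$. Applying (P3) with parameter $M_0'$ in place of $M_0$ produces constants $\alpha>0$ and $R_0>0$ so that for all $r$ with $\sigma r\geq R_0$ and all $t\in[\sigma r,r]$,
\[\frac{\mu\times\mu\bigl(\{(y,z)\in\Ann{r}{x}{k}\times\Ann{r}{x}{k}: d_\T(y_t,z_t)<M_0'\}\bigr)}{\mu\times\mu\bigl(\Ann{r}{x}{k}\times\Ann{r}{x}{k}\bigr)}\leq e^{-\alpha t}\leq e^{-\alpha\sigma r}.\]
Next I would pick the discrete times $t_i=\sigma r+i\Delta$ for $i=0,1,\ldots,n$ where $n=\lceil(1-\sigma)r/\Delta\rceil$, so that every $t\in[\sigma r,r]$ lies within $\Delta$ of some $t_i$. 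Let $B_i$ denote the subset of $\Ann{r}{x}{k}\times\Ann{r}{x}{k}$ where $d_\T(y_{t_i},z_{t_i})<M_0'$. A straightforward union bound gives
\[\mu\times\mu\Bigl(\bigcup_{i=0}^n B_i\Bigr)\leq (n+1)\,e^{-\alpha\sigma r}\,\mu\times\mu\bigl(\Ann{r}{x}{k}\times\Ann{r}{x}{k}\bigr),\]
and the prefactor $(n+1)e^{-\alpha\sigma r}$ tends to $0$ as $r\to\infty$ since $n$ grows only linearly in $r$.

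Finally I would observe that on the complement of $\bigcup_i B_i$, every pair $(y,z)$ satisfies $d_\T(y_{t_i},z_{t_i})\geq M_0'$ at each $t_i$, and then the $2$--Lipschitz property upgrades this to $d_\T(y_t,z_t)\geq M_0'-2\Delta=M_0$ for every $t\in[\sigma r,r]$. Thus the proportion of $(y,z)$ with $d_\T(y_t,z_t)\geq M_0$ for all $t\in[\sigma r,r]$ is at least $1-(n+1)e^{-\alpha\sigma r}\to 1$, which is precisely (P2).

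There is no real obstacle here: the only subtlety is to remember to inflate $M_0$ by $2\Delta$ before applying (P3), since the discretization trades a continuum of conditions for a finite union at the cost of a constant loss in the separation threshold. The exponential decay in (P3) easily absorbs the linear-in-$r$ cardinality of the discretized time set.
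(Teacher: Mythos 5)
Your proposal is correct and follows essentially the same route as the paper's proof: discretize the time interval to a unit net, use the $2$--Lipschitz behavior of $t\mapsto d_\T(y_t,z_t)$ to inflate the threshold $M_0$ by a bounded amount, apply (P3) at each discrete time, and take a union bound that is dominated by the exponential decay. The only cosmetic difference is that the paper sums the geometric series $\sum_n e^{-\alpha(\sigma r+n)}\leq e^{-\alpha\sigma r}/(1-e^{-\alpha})$ rather than bounding each term by $e^{-\alpha\sigma r}$ and multiplying by the count, but both estimates tend to zero and yield (P2).
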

\begin{proof}
If $(y,z)\in \Ann rxk\times \Ann rxk$ does not lie in the set
\[E_r = \bigl\{(y,z) \in \Ann rxk \times \Ann rxk : d_\T(y_t, z_t) \geq M_0 ~\text{for all}~t\in [\sigma r,r]\bigr\},\]
then there is some $n\in \mathbb{N}$, $n\leq (1-\sigma)r$, such that 
$d_\T(y_{\sigma r+n},z_{\sigma r+n}) < M_0+2$. 
Thus all such points are contained in the union of exceptional sets  
corresponding to the radii $\sigma r, \sigma r+1, \dotsc, \sigma r +\floor{r-\sigma r}$. 
Using the exponential bound provided by property (P3), we see that for large $r$ the complement of $E_r$ has measure at most
\begin{align*}
\mu(\Ann{r}{x}{k})^2&\left(e^{-\alpha \sigma r} + e^{-\alpha(\sigma r + 1)} + \dotsb + e^{-\alpha(\sigma r + \floor{r-\sigma r}}\right)\\
&\leq\mu(\Ann{r}{x}{k})^2\left(\frac{1}{1-e^{-\alpha}}\right)e^{-\alpha\sigma r}.\qedhere
\end{align*}
\end{proof}

\begin{corollary}\label{cor:sep_property}
The Hausdorff measure $\hausmeas$, holonomy measure $\m$, and the visual measures $\vis(\vismeas)$ and $\vis(s_x)$ all satisfy property {\rm (P2)}.
\end{corollary}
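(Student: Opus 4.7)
The corollary follows by assembling earlier results, with a comparison argument needed only for $\hausmeas$.

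The plan is to handle the three measures $\m$, $\vis(\vismeas)$, and $\vis(s_x)$ first, since for these the work has essentially been done. Theorem~\ref{thm:exp-decay-hol} establishes the strong separation property (P3) for the holonomy measure $\m$, and Theorem~\ref{thm:exp-decay-vis} establishes (P3) for all rotation-invariant visual measures, which covers both $\vis(\vismeas)$ and $\vis(s_x)$. Proposition~\ref{lem:sphereEstimate} then immediately upgrades each of these statements to property (P2), by the simple observation that (P2) concerns a countable union (over integer times) of sets each controlled by the exponential bound in (P3), and the geometric series $\sum_{n\ge 0}e^{-\alpha(\sigma r + n)}$ remains exponentially small.

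For the Hausdorff measure $\hausmeas$, the key input is Corollary~\ref{haus-vs-hol}, which provides constants $k_2 > k_1 > 0$ with $k_1 \m \le \hausmeas \le k_2 \m$. Given $M_0, k>0$, $0 < \sigma < 1$, and $x\in\T(S)$, let
\[ G_r = \bigl\{(y,z)\in \Ann rxk\times\Ann rxk : d_\T(y_t,z_t)\ge M_0 \text{ for all }t\in[\sigma r,r]\bigr\} \]
be the ``well-separated'' set. Taking products of the comparison yields $k_1^2 (\m\times\m) \le \hausmeas\times\hausmeas \le k_2^2 (\m\times\m)$ on $\T(S)\times\T(S)$, so
\[ \frac{\hausmeas\times\hausmeas(G_r^c)}{\hausmeas\times\hausmeas(\Ann rxk\times\Ann rxk)} \le \frac{k_2^2}{k_1^2} \cdot \frac{\m\times\m(G_r^c)}{\m\times\m(\Ann rxk\times\Ann rxk)}, \]
where the complement $G_r^c$ is taken inside $\Ann rxk\times\Ann rxk$. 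The right-hand ratio tends to $0$ as $r\to\infty$ because $\m$ satisfies (P2), and hence so does the left-hand ratio, yielding (P2) for $\hausmeas$.

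There is essentially no obstacle here; the content is entirely in the earlier theorems. The only thing to be a little careful about is that the comparison of measures transfers ratios of measures with only a bounded multiplicative loss, which is exactly what (P2) tolerates since it asks for a limiting ratio of $1$ rather than any quantitative rate. (Had we instead wished to deduce the strong separation property (P3) for $\hausmeas$, the same argument would still work, since the exponential decay rate is preserved up to the bounded constant $k_2^2/k_1^2$.)
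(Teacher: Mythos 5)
Your proof is correct and is exactly the argument the paper's structure calls for (the paper gives no explicit proof of this corollary, but your assembly of Theorems~\ref{thm:exp-decay-vis} and~\ref{thm:exp-decay-hol}, Proposition~\ref{lem:sphereEstimate}, and the measure comparison in Corollary~\ref{haus-vs-hol} is the intended route). Your closing observation that the comparison constants preserve the exponential rate, so $\hausmeas$ actually inherits the stronger property (P3) as well, is a nice extra point worth making.
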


Thus we can conclude that after throwing out a subset of $\Ann rxk\times\Ann rxk$ of an arbitrarily small proportional measure, all pairs of geodesics stay separated by an arbitrarily chosen distance in Teichm\"uller space after a threshold time $\sigma r$ has elapsed.

\section{Statistical hyperbolicity}
\label{sec:hyperbolicity}

We can now assemble our results to prove 
Theorems~\ref{thm:balls} and \ref{thm:spheres}.

\begin{theorem}[Annulus version of statistical hyperbolicity]
\label{thm:stathyp-annulus}
Let $\mu$ be any measure on $\T(S)$ satisfying the thickness property {\rm (P1)} and the separation property 
{\rm (P2)}. Fix a basepoint $x\in \T(S)$ and an arbitrary $k> 0$. Then
\[\lim_{r\to\infty}\frac{1}{r}\frac{1}{\mu(\Ann rxk)^2}\int_{\Ann rxk\times \Ann rxk} d_\T(y,z) \ d\mu(y) d\mu(z)=2.\]
\end{theorem}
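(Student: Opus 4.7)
The upper bound $\limsup\leq 2$ is immediate from the triangle inequality, since $d_\T(y,z)\leq d_\T(y,x)+d_\T(x,z)\leq 2r$ for all $y,z\in\Ann rxk$. For the matching lower bound, the plan is to show that for most pairs $(y,z)\in\Ann rxk\times\Ann rxk$, the geodesic $[y,z]$ must ``dip back'' close to $x$, so that $d_\T(y,z)\approx d_\T(x,y)+d_\T(x,z)\approx 2r$.

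Given $\delta>0$, set up constants as follows. Apply P1 with $\theta=3/4$ to obtain $\epsilon>0$; apply Theorem~\ref{thm:partiallyThickIntervalsAreClose} with parameters $(\epsilon,1/2)$ to get constants $C,L$; set $M_0:=2C+1$; and choose $\sigma\in(0,1/2)$ with $4\sigma<\delta$. By P1 applied to $y$ and P2 applied to $(y,z)$, for large $r$ the set $G_r$ of pairs $(y,z)\in\Ann rxk\times\Ann rxk$ satisfying
\[\thickfrac_\epsilon[x,y_t]\geq 3/4 \quad\text{and}\quad d_\T(y_t,z_t)\geq M_0 \quad\text{for all } t\in[\sigma r,r]\]
has $\mu\times\mu(G_r)/\mu(\Ann rxk)^2\to 1$.

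For each $(y,z)\in G_r$, consider the subinterval $I:=[y_{\sigma r},y_{2\sigma r}]\subset [x,y]$, which for large $r$ is a genuine segment of $[x,y]$ since $d_\T(x,y)\geq r-k\geq 2\sigma r$. The thick time of $[x,y_{2\sigma r}]$ is at least $\tfrac{3}{2}\sigma r$ by P1 at $t=2\sigma r$, while that of $[x,y_{\sigma r}]$ is trivially at most $\sigma r$, so $\thickfrac_\epsilon(I)\geq 1/2$; since $|I|=\sigma r\geq L$, Theorem~\ref{thm:partiallyThickIntervalsAreClose} supplies $p=y_s\in I$, with $s\in[\sigma r,2\sigma r]$, within distance $C$ of $[x,z]\cup[y,z]$. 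If $p$ were within $C$ of some $z_{s'}\in[x,z]$, the triangle inequality would give $|s-s'|\leq C$, hence $d_\T(y_s,z_s)\leq 2C<M_0$, contradicting P2 (since $s\geq\sigma r$). Therefore some $q\in[y,z]$ satisfies $d_\T(p,q)\leq C$, and since $q\in[y,z]$,
\[d_\T(y,z)=d_\T(y,q)+d_\T(q,z)\geq (d_\T(x,y)-s-C)+(d_\T(x,z)-s-C)\geq 2r-2k-4\sigma r-2C.\]
Dividing by $r$ and integrating over $G_r$, one obtains $\liminf\geq 2-4\sigma\geq 2-\delta$, which completes the lower bound since $\delta$ was arbitrary.

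The main obstacle is the calibration of the interval $I$. Placing $I$ too close to $x$ (e.g., $I=[x,y_{\sigma r}]$) would let Theorem~\ref{thm:partiallyThickIntervalsAreClose} point trivially to a $p$ close to $[x,z]$ near the basepoint, where P2 gives no control. Placing $I$ too far from $x$ (e.g., $I=[y_{\sigma r},y]$) would allow $s$ to be as large as $r$, destroying the dip-back estimate. The midscale choice $I=[y_{\sigma r},y_{2\sigma r}]$ is precisely the sweet spot: $s$ is bounded below by $\sigma r$ (so P2 rules out fellow-traveling with $[x,z]$) and above by $2\sigma r$ (so the dip-back bound gives $d_\T(y,z)\geq 2r(1-O(\sigma))$). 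The choice $\theta>1/2$ in P1 is what guarantees that the midscale interval inherits a positive thick-stat, making Theorem~\ref{thm:partiallyThickIntervalsAreClose} applicable.
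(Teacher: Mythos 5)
Your proposal is correct and follows essentially the same argument as the paper: fix $\theta=3/4$ to get $\epsilon$ from (P1), use (P2) to ensure the rays stay separated, apply Theorem~\ref{thm:partiallyThickIntervalsAreClose} to the midscale interval $[y_{\sigma r},y_{2\sigma r}]$ (which inherits thick-stat $\geq 1/2$ from the budget of thick time on $[x,y_{2\sigma r}]$), and conclude that $[y,z]$ passes near the basepoint. The only cosmetic differences are your choice $M_0=2C+1$ in place of the paper's $3C$ and your use of $4\sigma<\delta$ instead of treating $\delta$ and $\sigma$ as independent small parameters.
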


We have shown that these hypotheses are satisfied by the standard visual measures $\vis(\vismeas)$ and $\vis(s_x)$, the holonomy measure $\m$, and the Hausdorff measure $\hausmeas$ (Corollaries~\ref{cor:thickness_for_vis}, \ref{cor:thickness_for_hol}, \ref{cor:sep_property}), and thus also all the other measures considered in this paper. Therefore, combining Theorem~\ref{thm:stathyp-annulus} with Lemma~\ref{lem:annuliReduction} (Reduction to annuli) we immediately obtain Theorem~\ref{thm:balls}:

\begin{theorem:balls}
Let $\mu$ denote the Hausdorff measure $\hausmeas$, holonomy measure $\m$, or either standard visual measure $\vis(\vismeas)$ or $\vis(s_x)$.
Then for every point $x\in \T(S)$,
\[\lim_{r\to\infty} \frac 1r \frac{1}{\mu (\Ball rx)^2}\int_{\Ball rx \times \Ball rx} d_{\T}(y,z) \ \ 
d\mu(y) d\mu(z)=2.\]
\end{theorem:balls}

\begin{proof}[Proof of Theorem~\ref{thm:stathyp-annulus}]
Set $\theta= \nicefrac{3}{4}$. For any $0 < \delta ,\sigma < \nicefrac 13$, let $\epsilon = \epsilon(\theta,\sigma) > 0$ be the corresponding thickness parameter guaranteed by Property (P1). For this $\epsilon$ and $\theta'=\nicefrac{1}{2}$, let $C$ and $L$ be the corresponding constants provided by Theorem~\ref{thm:partiallyThickIntervalsAreClose}. 
Properties (P1) and (P2) together imply that for all large $r$, we may restrict to a subset $E_r\subset \Ann rxk\times\Ann rxk$ whose complement has proportional $\mu$--measure at most $\delta$ and such that all pairs $(y,z)\in E_r$ satisfy
\[\thickfrac_{\epsilon}[x,y_t],\thickfrac_{\epsilon}[x,z_t]
\geq \theta \qquad\text{and}\qquad d_\T(y_t,z_t)\geq 3C\]
for all $t\in [\sigma r, r]$, where  $y_t$ and $z_t$ are the time--$t$ points on the geodesic rays from $x$ through $y$ and $z$, respectively. Notice that, in this case, the point $y_t$ cannot be within $C$ of any point on $[x,z]$ (by the triangle inequality, any point on $[x,z]$ within $C$ of $y_t$ must lie in $[z_{t-C},z_{t+C}]$).

We now let $t=2\sigma r$.  Since $\thickfrac_\epsilon[x,y_{2\sigma r}]\geq \theta=\nicefrac{3}{4}$, it follows that the interval $I_r = [y_{\sigma r},y_{2\sigma r}]$ satisfies $\thickfrac_\epsilon I_r\geq \nicefrac{1}{2} = \theta'$ for all large $r$. We also have $\len(I_r) = \sigma r\geq L$ when $r$ is large, and $\sigma<\nicefrac 13$ ensures
$I_r\subset [x,y]$. 
Since 
$I_r \cap \Nbhd_C([x,z])=\emptyset$, 
as noted above, Theorem~\ref{thm:partiallyThickIntervalsAreClose} now implies that  
$I_r \cap \Nbhd_C([y,z])\neq\emptyset.$
 Therefore $[y,z]$ contains a point in the ball $\Ball{2\sigma r+C}{x}$, and so we conclude that
\[d_\T(y,z) \geq 2(r-k - (2\sigma r + C))\]
for all $(y,z)\in E_r$. Putting the above estimates together, we find that
\begin{align*}
\liminf_{r\to\infty}\; &\frac{1}{r}\frac{1}{\mu(\Ann{r}{x}{k})^2}\int_{\Ann{r}{x}{k}\times \Ann{r}{x}{k}}\!d_\T(y,z) \,d\mu(y)d\mu(z)\\
&\geq \liminf_{r\to\infty} \frac{1}{r}(1-\delta)(2r-2k - 4\sigma r -2C)
= (1-\delta)(2-4\sigma).
\end{align*}
Since $\delta$ and $\sigma$ can be chosen arbitrarily small, the result follows.
\end{proof}

\begin{rem}\label{alt-rafi}
One could give an alternate proof of Theorem~\ref{thm:balls} that does not rely on  Theorem~\ref{thm:partiallyThickIntervalsAreClose}, but rather combines Rafi's Theorem~\ref{thm:RafiThinTriangles} with our Theorem~\ref{thm:interval}, which strengthens property (P1) to show that with high probability a sufficiently long geodesic has a totally thick subinterval of definite length. (This holds despite the fact that the probability of such a subinterval occurring at any specified time is small).  
\end{rem}

We similarly obtain Theorem~\ref{thm:spheres}:
\begin{theorem:spheres}
For every point $x\in \T(S)$ and  either family $\{\mu_r\}$ of standard visual measures $\mu_r =\vis_r(\vismeas)$ or $\vis_r(s_x)$ on the spheres $\Sph{r}{x}$, we have
\[E(\T(S),x,d_\T,\{\mu_r\}) = 2.\]
\end{theorem:spheres}
\begin{proof}
Visual measures on $\T(S)$ are constructed by radially integrating these visual measures on spheres. 
In fact, Proposition~\ref{prop:thick-vis} and Theorem~\ref{thm:exp-decay-vis} 
were proved for annuli by first verifying them for spheres, and so analogous formulations of properties (P1) and (P2) also hold for the visual measures $\{\mu_r\}$ on spheres.  The result thus follows by the same argument used to prove Theorem~\ref{thm:stathyp-annulus} above.  \end{proof}

\section{Thin triangles}
For $y,z\in \Ball rx$, we can form the geodesic triangle $\triangle=\triangle(x,y,z)$ whose sides are $[x,y],[x,z]$
and $[y,z]$. Since with positive probability the points $y$ and $z$ are  in the thin part of $\T(S)$, 
we can not expect that almost every triangle is thin as $r\to\infty$. However, it is true that almost every triangle is mostly thin:
For a fixed $\delta$, let  $0\leq \Theta_\delta(\triangle)\leq 1$ denote the proportion of the three sides of $\triangle$ 
consisting of points that lie in a $\delta$--neighborhood of the union of the other two sides. 

\begin{theorem:expthin}
Let $\mu$ denote either the Hausdorff measure $\hausmeas$ or the holonomy measure $\m$. Then for all $x\in\T(S)$ and $\sigma > 0$ there exists  $\delta>0$ such that
$$\liminf_{r\to\infty} \frac{1}{\mu (\Ball rx)^2}\int_{\Ball rx \times \Ball rx} \Theta_\delta(\triangle(x,y,z)) \ d\mu(y)d\mu(z) \geq 1-\sigma.$$
\end{theorem:expthin}

Before embarking on the proof, we use the results in the 
appendix to establish the following analog of Theorem~\ref{thm:thick-hol}.

\begin{lemma}[Thick-stat near the end]
\label{lem:thick_near_end}
For all $0 < \theta,\sigma < 1$, $x\in\T(S)$ there exist $\epsilon''=\epsilon''(\theta,\sigma)>0$ and $\alpha>0$ such that for all $k>0 $ and all sufficiently large $r$ 
 $$\frac{\m \left( \bigl\{y\in \Ann rxk : \thickfrac_{\epsilon''}[y_{(1-\sigma)r},y]< \theta\bigr\}   \right)}{\m(\Ann rxk)}  
 < e^{-\alpha r},$$
where $y_t$ denotes the time--$t$ point along the geodesic ray from $x$ through $y$.
\end{lemma}
\begin{proof}
For the given $\theta$, we let $\rho = 1-\theta$ and take $\delta' = \rho/8$. For this $\delta'$ and the given $\sigma$, we choose $\epsilon_1 \ll \sigma$ and $\tau\geq \tfrac{4\ccc}{\epsilon_1}$ accordingly so that $\kappa\colonequals\frac{\delta'\sigma}h-2\epsilon_1>0$. Recall that every geodesic $[x,y]$ with $y\in \Ann rxk$ then determines a sample path in the set $\P_\tau^x(r(1+\epsilon_1))$ of random walks of at most $r(1+\epsilon_1)/\tau$ steps starting at $x$. As in the Appendix, we let $\mu_\tau^x$ denote the probability measure on $\P_\tau^x(r(1+\epsilon_1))$, and by Lemma~\ref{lem:prob_to_card} we moreover assume that $\tau$ is sufficiently large so that the cardinality of any subset $A\subset \P_\tau^x(r(1+\epsilon_1))$ is bounded by $\mu_\tau^x(A)e^{hr(1+2\epsilon_1)}$.

Fixing $\tau$ and $\rho$ as above and choosing $\epsilon$ so that $x\in \T_\epsilon$, we now apply Theorem~\ref{thm:fraction} with a constant $\gamma$ satisfying $c_\tau^\rho < \gamma < c_\tau^{\rho/2}$. This provides $\epsilon'$ and a cocompact set $\EuScript K \supset \T_\epsilon$ for which the conclusion of that theorem holds. Since $\tau$ is assumed to be large, by Proposition~\ref{EMbounds} we may suppose $c_\tau < e^{-\tau/2}$ so that $\gamma < e^{-\rho\tau/4}$. Roughly, the idea is to now apply Theorem~\ref{thm:fraction} to the set of sample paths starting at $y_{(1-2\sigma)r}$. 

To this end, we first argue that most random walks $\lambda\in \P_\tau^x(r(1+\epsilon_1))$ land in $\EuScript K$ for some step in the interval $[2\sigma  r/\tau,\sigma r/\tau]$ of steps from the end. The subset $\Omega\subset \P_\tau^x(r(1+\epsilon_1))$ of exceptional sample paths that avoid $\EuScript K$ between steps $2\sigma r/\tau$ and $\sigma r/\tau$ from the end is the union over $j\geq 0$ of sets $\Omega_j$ consisting of paths that lie in $\EuScript K$  at step $2\sigma r/\tau+j$ from the end and stay outside $\EuScript K$ for the next $\sigma r/\tau+j$ steps. 
 Now (\ref{eqn:sojournBound}) in the Appendix says that for each $j$ the probability of $\Omega_j$ satisfies 
$$\mu_\tau^x(\Omega_j) \leq M \left(e^{-\tau/4}\right)^{\frac{\sigma r}{\tau} + j} = M e^{-\sigma r / 4} e^{-\tau j / 4}.$$
for some constant $M = M(\tau)$ (note that the constant $\gamma_0$ appearing in \eqref{eqn:sojournBound} satisfies $\gamma_0 < \gamma^{1/\rho} < e^{-\tau/4}$). Assuming that $r$ is large, if we sum over $j$ we see that the probability of a random walk lying in $\Omega$ satisfies
\[\mu_\tau^x(\Omega) \leq e^{-\sigma r/8} < e^{-\delta' \sigma r}.\]

Now consider the paths that do land in $\EuScript K$ for some step in the interval $[2\sigma r/\tau, \sigma r/\tau]$ of steps from the end, namely the set
$$\Sigma:=\P_\tau^x(r(1+\epsilon_1))\setminus \Omega.$$
This can be partitioned into sets $\Sigma_j$ for $j=0,1,\ldots,\sigma r/\tau$,  where a path is in $\Sigma_j$ if its first $\EuScript K$--point in the interval of $[2\sigma r/\tau, \sigma r/\tau]$ steps from the end appears at $n_j=\sigma r/\tau + j$ steps from the end. 
 
We claim that most paths in $\Sigma_j$ will spend at least $\theta$ proportion of their final $n_j$ steps in $\T_{\epsilon'}$. Indeed, by Theorem~\ref{thm:fraction}, the probability that a random walk in $\Sigma_j$ fails to have at least $\theta$ proportion of its final $n_j$ steps $\T_{\epsilon'}$ is bounded by
\[\gamma^{n_j} < e^{-n_j\tau \rho/4} = e^{-2\delta' n_j\tau} = e^{-2(\delta'\sigma r + \delta'\tau j)} < e^{-\delta'\sigma r}.\]
Except for these paths, every path in $\Sigma_j$ spends at least $\theta$ proportion of its last $n_j = \sigma r / \tau + j$ steps in $\T_{\epsilon'}$, and therefore (since $j\leq \sigma r/\tau$) at least $\theta/2$ proportion of its final $2\sigma r/\tau$ steps in $T_{\epsilon'}$.

Since the above bound of $e^{-\delta' \sigma r}$ holds for each set $\Sigma_j$, it follows that the probability that a random walk in $\Sigma$ fails to spend at least $\theta/2$ proportion of its last $2\sigma r/\tau$ steps in $\T_{\epsilon'}$ is at most $e^{-\delta' \sigma r}$. Letting $\Sigma'\subset \Sigma$ denote these exceptional paths, we conclude that
\[\mu_\tau^x(\Sigma') \leq e^{-\delta' \sigma r}\mu_\tau^x(\Sigma) \leq e^{-\delta'\sigma r}.\]
Adding this to our previous estimate on $\mu_\tau^x(\Omega)$, we find that except for the subset $\Omega\cup\Sigma'\subset \P_\tau^x(r(1+\epsilon_1))$ of $\mu_\tau^x$--measure at most $2e^{-\delta' \sigma r}$, every sample path in $\P_\tau^x(r(1+\epsilon_1))$ spends at least $\theta/2$ proportion of its final $2\sigma r/\tau$ steps in $\T_{\epsilon'}$. In particular, by Lemma~\ref{lem:prob_to_card} the number of these exceptional  sample paths is at most
\[\mu_\tau^x(\Omega\cup\Sigma') e^{hr(1+2\epsilon_1)} \leq 2e^{-\delta\sigma r} e^{hr(1+2\epsilon_1)}  = 2e^{hr(1 -\kappa)}.\]
(Recall that $\epsilon_1\ll \sigma$ was chosen so that $\kappa = \frac{\delta'\sigma}h-2\epsilon_1>0$). Since the ball of radius $\ccc$ centered at any point has $\m$--measure $\lmul 1$ by Lemma~\ref{lem:ballVolumes}, it follows that for all large $r$, the set of points $y\in \Ann rxk$ for which the sample path associated to the geodesic $[x,y]$ lies in  $\Omega\cup\Sigma'$ has $\m$--measure at most on the order of 
\[2e^{hr(1-\kappa)} \lmul  \m(\Ann rxk)e^{-\kappa r} \leq \m(\Ann rxk)e^{-\kappa r/2}.\]

We conclude that for all $y\in \Ann rxk$ except for a set of at most this measure, the sample path associated to the geodesic $[x,y]$ spends at least $\theta/2$ proportion of its final $2\sigma r/\tau$ steps in $\T_{\epsilon'}$. Since every point of such a geodesic is within distance $\tau$ of a point on the sample path, by setting $\epsilon'' = \epsilon'e^{-\tau}$ it follows that $\thickfrac_{\epsilon''}[y_{(1-2\sigma)r},y]\geq \theta/2$ for every such $y\in \Ann rxk$. Therefore we have proved the claim for $\theta/2$ and $2\sigma$. As this can be done for any $0<\theta,\sigma < 1$, the theorem follows.
\end{proof}

\begin{proof}[Proof of Theorem~\ref{T:expected_thinness}]
By Corollary~\ref{haus-vs-hol}, it suffices to assume $\mu = \m$, and by Lemma~\ref{lem:annuliReduction} it suffices to estimate the expected value over $\Ann rxk^2$ rather than $\Ball rx^2$. As in the proof of \ref{thm:stathyp-annulus}, we set $\theta = \nicefrac{3}{4}$ and let $\epsilon = \epsilon(\theta,\sigma)$ be the smaller of the constants provided by Property (P1) and Lemma~\ref{lem:thick_near_end}. For this $\epsilon$ and $\theta' = \nicefrac{1}{2}$, we then let $C$ and $L$ be the corresponding constants provided by Theorem~\ref{thm:partiallyThickIntervalsAreClose}.

Properties (P1)--(P2) and Lemma~\ref{lem:thick_near_end} show that for all large $r$ we may restrict to a subset $E_r\subset \Ann rxk^2$ -- with complement having arbitrarily small proportional $\m$--measure -- such that for all $(y,z)\in E_r$ the four intervals
\[
[y_{\sigma r},y_{2\sigma r}],\quad
[z_{\sigma r},z_{2\sigma r}],\quad
[y_{(1-2\sigma)r,}y_{(1-\sigma)r}],\quad\text{and}\quad
[z_{(1-2\sigma)r},z_{(1-\sigma)r}]\]
all have $\thickfrac_\epsilon \geq \theta' = \nicefrac{1}{2}$ and such that $d_\T(y_t,z_t)\geq 3C$ for all $t\geq \sigma r$. By Theorem~\ref{thm:partiallyThickIntervalsAreClose}, it follows that these intervals respectively contain points $y',z',y'',z''$ that each lie within distance $C$ of the geodesic $[y,z]$. Thus by Rafi's fellow traveling result \cite[Theorem C]{Ra}, there is a constant $C'>C$ such that the intervals $[y',y'']$ and $[z',z'']$ are entirely contained within $C'$ of $[y,z]$.

Now, $[y',y'']$ and $[z',z'']$ both have length at least $(1-4\sigma)r$, and so the subintervals of $[y,z]$ that they fellow travel must each have length at least $(1-4\sigma)r - 2C'$. Thus we have identified subintervals of $\triangle(x,y,z)$ whose lengths total at least $4(1-4\sigma)r - 4C'$ and which lie within $C'$ of the union of the other two sides of $\triangle(x,y,z)$. Since the total perimeter of $\triangle(x,y,z)$ is at most $4r$, it follows that
\[\Theta_{C'}(\triangle(x,y,z)) \geq 1-4\sigma - \tfrac{C'}{r}\]
for all $(y,z)\in E_r$. Since the complement of $E_r$ has arbitrarily small proportional $\m$--measure, the result follows.
\end{proof}

\begin{rem}
The above proof may be easily adapted to show that the conclusion of Theorem~\ref{T:expected_thinness} also holds for the expected value of $\Theta_\delta(y_1,y_2,y_3)$ over all triples of points $y_1,y_2,y_3\in \Ball{r}{x}$.
\end{rem}

\appendix
\section{Repackaged distance formula}\label{repackaged}

The purpose of this appendix is to repackage Rafi's distance formula (Theorem~\ref{thm:RafiDistFormula}) in a way that treats annular and non-annular subsurfaces on equal footing:

\begin{proposition}[Repackaged distance formula]
Given any sufficiently large threshold $M_0$, for all $x,y\in \T(S)$ we have:
$$
  d_\T(x,y)\ 
\asymp_{M_0}\ 
    d_S(x,y) 
+ \sum_Y \thresh{d_Y(x,y)}_{{M_0}}
$$
Here, the sum is over all (annular and non-annular) proper subsurfaces. 
\end{proposition}

For simplicity and without loss of generality, below we suppose that $\epsilon_0$ has been chosen small enough that $\plog(1/\epsilon_0)\ge 100$, say.
We begin with a straightforward reformulation.
\begin{lemma}\label{lem:reorganizedDistFormula}
For any sufficiently large threshold $M_0$, for all $x,y\in \T(S)$ we have:
\begin{align*}
d_\T(x,y)\  &\asymp_{M_0}\ 
d_S(x,y) 
+ \sum_V \thresh{d_V(x,y)}_{M_0}
+\!\!\! \sum_{  A \,:\, \partial A \in\Gamma_{xy}  }\thresh{d_A(x,y)}_{M_0} \; +\\ 
 &\!\!\sum_{A \,:\, \partial A \notin\Gamma_{xy}}
\!\thresh{\max\left\{\plog\left(d_{\CC(A)}(x,y)\right), \plog\left(\frac{1}{l_x(\partial A)}\right), \plog\left(\frac{1}{l_y(\partial A)}\right)\right\}}_{\log M_0}
\end{align*}
\end{lemma}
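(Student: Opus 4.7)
The plan is to transform Rafi's formula (Theorem~\ref{thm:RafiDistFormula}) into the reorganized shape through a sequence of elementary rearrangements, each of which contributes only bounded additive or multiplicative error that can be absorbed into $\asymp_{M_0}$. The two basic moves are: (i) interchanging $\max$ with $\sum$ over topologically bounded index sets; and (ii) the identity $\plog\thresh{N}_{M_0} = \thresh{\plog N}_{\log M_0}$, valid for $M_0 \ge 1$.

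For the annuli with $\partial A = \alpha \in \Gamma_{xy}$, I would use the topological bound $|\Gamma_{xy}| \le 3g-3$ (at most that many disjoint simple closed geodesics fit on $S$) to replace $\max_{\alpha\in\Gamma_{xy}}d_{\H_\alpha}(x,y)$ by the corresponding sum; applying $\thresh{\cdot}_{M_0}$ to each summand then costs only an additive error of at most $|\Gamma_{xy}|\cdot M_0$, a constant. This produces exactly the block $\sum_{A:\partial A\in\Gamma_{xy}}\thresh{d_A(x,y)}_{M_0}$ of the reorganized formula.

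For annuli with $\partial A \notin \Gamma_{xy}$, the main bookkeeping step is to absorb the two length maxima $\max_{\alpha\in\Gamma_x}\plog(1/l_x(\alpha))$ and $\max_{\alpha\in\Gamma_y}\plog(1/l_y(\alpha))$ into a per-annulus contribution. The key indexing observation is that each $\alpha \in \Gamma_x$ corresponds to exactly one annulus $A$ with core curve $\alpha$, and this $A$ automatically has $\partial A \notin \Gamma_{xy}$ because $\alpha$ is thick in $y$; so $A$ already appears as an index in Rafi's annular sum. Replacing both length maxima by sums (move (i)) and combining the three resulting contributions at such an annulus via the elementary estimate $\plog a + \plog b + \plog c \asymp \max\{\plog a, \plog b, \plog c\}$ produces a single maximum $\max\{\plog d_{\CC(A)}(x,y), \plog(1/l_x(\partial A)), \plog(1/l_y(\partial A))\}$ per annulus. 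Finally, move (ii) converts the $M_0$-threshold on $d_{\CC(A)}$ into a $\log M_0$-threshold on the whole maximum.

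The step I expect to be the main obstacle is verifying that this last thresholding does not inadvertently suppress short-curve contributions that Rafi's formula retains. This is where the standing hypothesis $\plog(1/\epsilon_0) \ge 100$ matters: in the range of $M_0$ for which the reorganization is intended, one has $\log M_0 \le \plog(1/\epsilon_0)$, so every $\alpha\in\Gamma_x\cup\Gamma_y$ automatically satisfies $\plog(1/l(\alpha)) \ge \log M_0$ and survives thresholding. Conversely, boundary curves that are thick in both $x$ and $y$ contribute length terms bounded by $\plog(1/\epsilon_0)$, which vanish after thresholding and are absorbed into additive error. Finiteness of the annular sum is ensured because only finitely many annuli can exceed any fixed threshold in $d_{\CC(A)}$. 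Once these book-keeping checks are in place, collecting terms yields the stated formula.
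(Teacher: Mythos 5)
Your overall strategy matches the paper's: replace the maxima over $\Gamma_{xy}$, $\Gamma_x$, $\Gamma_y$ by sums using the cardinality bound $3g-3$, identify $\Gamma_x$ and $\Gamma_y$ contributions with per-annulus terms at annuli whose boundary lies outside $\Gamma_{xy}$, and collapse three logarithmic quantities into one thresholded maximum via $\plog\thresh{N}_{M_0}=\thresh{\plog N}_{\log M_0}$ and the elementary estimate $\plog a+\plog b+\plog c\asymp\max\{\plog a,\plog b,\plog c\}$. These are precisely the ingredients in the paper's proof.

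There is, however, a genuine problem with the threshold direction, and it makes your argument internally inconsistent. You take $\log M_0\le\plog(1/\epsilon_0)$, i.e.\ $M_0\le 1/\epsilon_0$, to guarantee that the $\Gamma_x\cup\Gamma_y$ length terms survive thresholding. But the lemma is stated for \emph{any sufficiently large} $M_0$; since $\epsilon_0$ is a fixed global constant, $M_0$ must eventually exceed $1/\epsilon_0$ (Rafi's distance formula already forces $M_0$ to be large). Worse, your next step---that thick-curve contributions, being at most $\plog(1/\epsilon_0)$, ``vanish after thresholding''---requires $\log M_0>\plog(1/\epsilon_0)$, the opposite inequality. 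The correct resolution is the one you did not take: note that by the very definition of $\Gamma_x$,
\[
\sum_{\alpha\in\Gamma_x}\plog\left(\frac{1}{l_x(\alpha)}\right)=\sum_{\alpha\notin\Gamma_{xy}}\plog\thresh{\frac{1}{l_x(\alpha)}}_{1/\epsilon_0},
\]
and then \emph{raise} the threshold from $1/\epsilon_0$ to any $M_0\ge1/\epsilon_0$. This changes at most $3g-3$ terms (the nonzero ones), each by at most $\plog M_0$, producing an additive error that depends only on $M_0$ and hence is absorbed into $\asymp_{M_0}$. The worry you flag---that thresholding might ``suppress short-curve contributions''---is thus not a real obstacle, and no upper bound on $M_0$ is needed. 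With this adjustment the rest of your bookkeeping goes through.
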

\begin{proof}
Since $\Gamma_{xy}$, $\Gamma_x$ and $\Gamma_y$ each contain at most $3g-3$ curves, each $\max$ over these
sets is within bounded multiplicative error of the corresponding sum, and applying a threshold only creates bounded additive error, so the first three terms of the lemma are established.
By the definition of $\Gamma_x$ we have
\[\sum_{\alpha\in \Gamma_x}\plog\left(\frac{1}{l_x(\alpha)}\right) = \sum_{\alpha\notin\Gamma_{xy}} \plog\thresh{\frac{1}{l_x(\alpha)}}_{1/\epsilon_0}.\]
Since this is a sum with at most $3g-3$ nonzero terms, we can increase the threshold to any number $M_0\geq 1/\epsilon_0$
with bounded additive error.  Finally, for functions $f,g,h$, 
we have (in fact with the implied multiplicative constant equal to $3$)
\[
\plog\thresh{f}_{M_0}+\plog\thresh{g}_{M_0}+\plog\thresh{h}_{M_0}\  {\emul}\ 
\thresh{\max\{\plog f,\plog g, \plog h\}}_{\log M_0}.\qedhere
\]
\end{proof}

We now show that each term in the last summand is bilipschitz equivalent to the corresponding hyperbolic distance  $d_A(x,y)$. 

\begin{lemma}\label{lem:twist+lenVShyp}
Consider an annular subsurface $A\subset S$ with core curve $\partial A = \alpha$. For each pair of points $x,y\in \T(S)$, set 
\[H_A(x,y) := \max\left\{\plog\left(d_{\CC(A)}(x,y)\right), \plog\left(\frac{1}{l_x(\alpha)}\right), \plog\left(\frac{1}{l_y(\alpha)}\right)\right\}.\]
If $x,y\in \T(S)$ are such that $\alpha\notin \Gamma_{xy}$ and either $d_A(x,y)$ or $H_A(x,y)$ is greater 
than $36\plog(1/\epsilon_0)$, then  $6^{-1}d_A(x,y) \leq H_A(x,y) \leq 6 d_A(x,y)$.
\end{lemma}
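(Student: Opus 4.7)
The plan is to compare both $d_A(x,y)$ and $H_A(x,y)$ directly to the three auxiliary quantities
\[ a := \plog(1/l_x(\alpha)), \quad b := \plog(1/l_y(\alpha)), \quad c := \plog(d_{\CC(A)}(x,y)), \]
via the explicit hyperbolic distance formula in $\H^2$. Set $u := \max(1, 1/l_x(\alpha)) = e^a$ and $v := \max(1, 1/l_y(\alpha)) = e^b$, and write $T := d_{\CC(A)}(x,y)$; the two points defining $d_A$ are then $(0,u)$ and $(T,v)$. After permuting $x$ and $y$ if necessary, I will assume $l_x(\alpha) \geq l_y(\alpha)$, so that $u \leq v$ and $H_A = \max(b,c)$. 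The hypothesis $\alpha \notin \Gamma_{xy}$ now forces $l_x(\alpha) \geq \epsilon_0$, hence $a \leq K := \plog(1/\epsilon_0)$. This bound is the sole use of the non-short hypothesis, and it will be the crucial input to the lower bound below.

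For the upper bound $d_A \leq 2H_A + O(1)$, I apply the triangle inequality in $\H^2$ using the intermediate point $(0,v)$: the vertical leg contributes exactly $b-a$, and the horocyclic leg at height $v$ has length $2\,\mathrm{arcsinh}(T/(2v)) \leq 2\max(0,c-b) + O(1)$. Summing gives $d_A \leq (b-a) + 2\max(0,c-b) + O(1) \leq 2H_A + O(1)$.

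For the matching lower bound I start from the exact formula
\[ \cosh(d_A) \;=\; 1 + \frac{T^2 + (u-v)^2}{2uv}, \]
and discard one of the two summands in the numerator depending on which case I am in. When $c \geq b$ (so $H_A = c$), dropping $(u-v)^2$ yields $\cosh(d_A) \geq T^2/(2uv)$, whence $d_A \geq 2c - a - b - \log 2$; combined with $a \leq K$ and $b \leq c$, this simplifies to $d_A \geq H_A - K - O(1)$. When $c < b$ (so $H_A = b$), dropping $T^2$ and invoking the exact identity
\[ 1 + \frac{(u-v)^2}{2uv} \;=\; \frac{u^2 + v^2}{2uv} \;=\; \cosh(b-a) \]
gives $d_A \geq b - a \geq H_A - K$. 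Either way $d_A \geq H_A - K - O(1)$.

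Combining the two inequalities, $d_A$ and $H_A$ differ by at most $K + O(1)$; under the threshold hypothesis $\max(d_A, H_A) \geq 36K$, this additive slack is far smaller than $H_A/6$ and $d_A/6$, and the factor-of-six estimate drops out by arithmetic in each of the two cases (``$H_A \geq 36K$'' and ``$d_A \geq 36K$''). The main obstacle is essentially just this final bookkeeping to verify that the threshold $36K$ is sufficient; the case analysis itself is brief, and the role of the non-short hypothesis is transparent, since without the bound $a \leq K$ the expression $\cosh(b-a)$ could be arbitrarily close to $1$ even while $H_A = b$ grows unboundedly.
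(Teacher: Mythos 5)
Your argument is correct and takes a genuinely different (and tidier) route than the paper. The paper works through the auxiliary points $\pi'_\alpha(x)=(0,1)$ and $\pi'_\alpha(y)=(T,1)$ on the bounding horocycle, sets $B := d_{\H^2}(\pi'_\alpha(x),\pi'_\alpha(y)) = \operatorname{arccosh}(1+T^2/2)$, proves the sandwich $\plog T \le B \le 4\plog T$, and then runs the triangle inequality $d_A \le a + B + b$ and $B \le a + d_A + b$ together with a further sub-case split on whether $\plog T$ dominates $3\plog(1/l_x(\alpha))$. You dispense with $B$ entirely: the upper bound uses the single intermediate point $(0,v)$, and the lower bound drops one of the two summands in the exact identity $\cosh d_A = 1 + \bigl(T^2+(u-v)^2\bigr)/(2uv)$, which handles both of the paper's sub-cases at once. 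In both approaches the non-short hypothesis enters only through $a \le K$, and the threshold converts additive slack into the multiplicative factor $6$. One sentence needs repair, though: ``$d_A$ and $H_A$ differ by at most $K+O(1)$'' is not what you proved — your upper bound $d_A \le 2H_A + O(1)$ carries a multiplicative factor of $2$, so there is no additive two-sided bound. What you actually have is the pair $H_A - K - O(1) \le d_A \le 2H_A + O(1)$, and the last paragraph should run the arithmetic from this form: if $H_A \ge 36K$ both inequalities give the factor $6$ directly (using $K \ge 100$ so the $O(1)$ terms are negligible), and if instead $d_A \ge 36K$ then the upper bound first forces $H_A \ge (36K - O(1))/2 \ge 17K$, after which the same arithmetic applies. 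With that correction the proof is complete.
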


\begin{proof}
Choose points $x,y\in \T(S)$ that satisfy the hypotheses. To fix notation, set $\pi'_\alpha(x) = (0,1)$ and 
$\pi'_\alpha(y) = (d_{\CC(A)}(x,y),1)$. These are the closest-point projections of $\pi_\alpha(x)$ and $\pi_\alpha(y)$ to 
the horocycle bounding 
$\H_\alpha$, and their distances from these points are exactly given by $\plog(1/l_x(\alpha))$ and $\plog(1/l_y(\alpha))$. Let
\[B=d_{\H^2}(\pi_\alpha'(x),\pi_\alpha'(y)) = \text{arccosh}\left(1+\frac{d_{\CC(A)}(x,y)^2}{2}\right)\]
denote the hyperbolic distance between these projections. Using this formula, one may easily check that the inequalities
\begin{equation}\label{eqn:hypLem--goodIneqs}
\plog d_{\CC(A)}(x,y) \leq B \leq 4 \plog d_{\CC(A)}(x,y)
\end{equation}
hold provided that either $B \geq 3$ or $d_{\CC(A)}(x,y)\geq 3$.

Applying the triangle inequality with the points $\pi'_\alpha(x)$ and $\pi'_\alpha(y)$ implies that
\begin{equation}\label{eqn:hypLem-upperbound}
d_A(x,y) \leq \plog\left(\frac{1}{l_x(\alpha)}\right) + B +  \plog\left(\frac{1}{l_y(\alpha)}\right).
\end{equation}
Then \eqref{eqn:hypLem--goodIneqs}, \eqref{eqn:hypLem-upperbound}, and the definition of $H_A$ imply that 
$d_A(x,y)\leq 6H_A(x,y)$ in the case that $B\geq 3$. If $B < 3$, we claim that the hypotheses of the Lemma ensure that $B$ cannot be the largest term on the right-hand side and therefore that $d_A(x,y)\leq 3L \leq 3H_A(x,y)$, where $L$ denotes the larger of the other two terms. Indeed, if $B$ were the largest term and $B<3$, then \eqref{eqn:hypLem-upperbound} 
would imply $d_A(x,y) < 9$, and \eqref{eqn:hypLem--goodIneqs} would necessitate $\plog d_{\CC(A)}(x,y) < 3$  so that 
$H_A(x,y)< 3$.  But then both $d_A$ and $H_A$ are less than $9$, contradicting the hypothesis.

By the above, the assumption $d_A(x,y) \geq 36\plog(1/\epsilon_0)$ implies that $H_A(x,y) \geq 6\plog(1/\epsilon_0)$; 
therefore all cases will be covered by proving that this in turn implies $H_A(x,y)\leq 6d_A(x,y)$.  
Without loss of generality, we may assume that $l_x(\alpha)\leq l_y(\alpha)$; since $\alpha\notin\Gamma_{xy}$ this guarantees $l_y(\alpha)\geq \epsilon_0$. First suppose that $\plog d_{\CC(A)}(x,y) \geq 3 \plog(1/l_x(\alpha))$, in which case we have $\plog d_{\CC(A)}(x,y) = H_A(x,y)\geq 6\plog(1/\epsilon_0)$. In particular we certainly have $d_{\CC(A)}(x,y)\geq 3$; thus 
\eqref{eqn:hypLem--goodIneqs} and the triangle inequality give
\begin{align*}
\plog d_{\CC(A)}(x,y)
&\leq B
\leq \plog\left(\frac{1}{l_x(\alpha)}\right) + d_A(x,y) + \plog\left(\frac{1}{l_y(\alpha)}\right).
\end{align*}
Therefore $H_A(x,y)=\plog d_{\CC(A)}(x,y) \leq 3 d_A(x,y)$ in this case. The remaining possibility $\plog d_{\CC(A)}(x,y)\leq 3\plog(1/l_x(\alpha))$ necessitates $3\plog(1/l_x(\alpha)) \geq H_A(x,y)$. Recall that $\pi'_\alpha(x)$ is the \emph{closest} point projection of $\pi_\alpha(x)$ to the horocycle bounding $\H_\alpha$; since $\pi'_\alpha(y)$ is also on this horocycle we have
\begin{align*}
\plog\left(\frac{1}{l_x(\alpha)}\right) 
&
\leq d_{\H^2}(\pi_\alpha(x),\pi'_\alpha(y))
\leq d_A(x,y) + \plog\left(\frac{1}{l_y(\alpha)}\right).
\end{align*}
The assumptions $3\plog(1/l_x(\alpha)) \geq H_A(x,y)
\ge  6\plog(1/\epsilon_0)$ and $l_y(\alpha)\geq \epsilon_0$ now ensure that $H_A(x,y)\leq 6d_A(x,y)$.
\end{proof}

\begin{corollary}\label{cor:twistVShyp}
Let $H_A(x,y)$ be defined as in Lemma~\ref{lem:twist+lenVShyp}. Then for any threshold ${M_0}\geq 36\plog(1/\epsilon_0)$ 
and any $x,y\in \T(S)$ we have
\[\sum_{\partial A\notin\Gamma_{xy}} 6^{-1}\thresh{d_{A}(x,y)}_{6{M_0}}
\leq \sum_{\partial A\notin\Gamma_{xy}} \thresh{H_A(x,y)}_{{M_0}}
\leq \sum_{\partial A\notin\Gamma_{xy}} 6\thresh{d_{A}(x,y)}_{{M_0}/6}\]
\end{corollary}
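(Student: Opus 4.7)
The plan is to deduce the corollary by applying Lemma~\ref{lem:twist+lenVShyp} separately to each annulus $A$ with $\partial A \notin \Gamma_{xy}$ and then summing over all such annuli. Since both the upper and lower bounds in the lemma compare $d_A(x,y)$ and $H_A(x,y)$ up to a factor of $6$, the work is just to use the threshold functions to reduce to the regime where the lemma actually applies.

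For the right-hand inequality $\thresh{H_A(x,y)}_{M_0} \leq 6 \thresh{d_A(x,y)}_{M_0/6}$, I would argue termwise. If $H_A(x,y) < M_0$, the left side is $0$ and there is nothing to prove. Otherwise, $H_A(x,y) \geq M_0 \geq 36 \plog(1/\epsilon_0)$, so the hypothesis of Lemma~\ref{lem:twist+lenVShyp} is satisfied; it yields $d_A(x,y) \geq H_A(x,y)/6 \geq M_0/6$, hence $\thresh{d_A(x,y)}_{M_0/6} = d_A(x,y) \geq H_A(x,y)/6 = \thresh{H_A(x,y)}_{M_0}/6$, as required.

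For the left-hand inequality $6^{-1} \thresh{d_A(x,y)}_{6M_0} \leq \thresh{H_A(x,y)}_{M_0}$, I would argue symmetrically. If $d_A(x,y) < 6M_0$ the left side vanishes. Otherwise $d_A(x,y) \geq 6M_0 \geq 36 \plog(1/\epsilon_0)$ triggers Lemma~\ref{lem:twist+lenVShyp}, giving $H_A(x,y) \geq d_A(x,y)/6 \geq M_0$; thus $\thresh{H_A(x,y)}_{M_0} = H_A(x,y) \geq d_A(x,y)/6 = \thresh{d_A(x,y)}_{6M_0}/6$.

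Summing each termwise inequality over all annular $A$ with $\partial A \notin \Gamma_{xy}$ yields the corollary. There is no real obstacle here: the only thing to be careful about is matching the threshold constants so that, whenever a term on one side is nonzero, the value on the other side is forced to exceed $36\plog(1/\epsilon_0)$ and thereby fall within the hypothesis of Lemma~\ref{lem:twist+lenVShyp}. The choice of thresholds $M_0$ and $M_0/6$ (respectively $6M_0$ and $M_0$) is engineered precisely for this.
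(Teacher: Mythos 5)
Your proof is correct and is precisely the termwise argument the paper intends: the corollary is stated immediately after Lemma~\ref{lem:twist+lenVShyp} with no separate proof because it is an immediate consequence of applying the lemma annulus by annulus, and the thresholds $M_0/6$, $M_0$, $6M_0$ are chosen exactly so that, whenever a thresholded term is nonzero, the size hypothesis ``$d_A$ or $H_A > 36\plog(1/\epsilon_0)$'' is triggered. The only thing worth noting is that you invoke the lemma at the boundary value $36\plog(1/\epsilon_0)$ where it is stated with a strict inequality, but this is immaterial (the inequalities in the lemma's proof are non-strict and pass to the closure), and your handling of the threshold arithmetic is exactly right.
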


With these estimates, we can derive the simplified distance formula.

\begin{proof}[Proof of Repackaged Distance Formula]
Choose any sufficiently large threshold ${M_0}$ such that Lemma~\ref{lem:reorganizedDistFormula} holds for both $e^{6M_0}$ and $M_0/6$ and such that $M_0/6\geq 36\plog(1/\epsilon_0)$. 
Notice that, in any sum of the form $\sum \thresh{f}_{M}$, raising the threshold can only decrease the value of the sum, and lowering the threshold can only increase its value. Therefore, combining Lemma~\ref{lem:reorganizedDistFormula} and Corollary~\ref{cor:twistVShyp} we find that for any $x,y\in \T(S)$ the various distances satisfy
\begin{align*}
d_\T 
&\,\lboth_{\M_0}\, d_S + \sum_V \thresh{d_V}_{e^{6M_0}} + \sum_{\partial A \in \Gamma_{xy}} \thresh{d_A}_{e^{6M_0}} + 
\sum_{\partial A\notin\Gamma_{xy}}\thresh{H_A}_{6M_0}\\
&\leq 6\left(d_S + \sum_V \thresh{d_V}_{{M_0}} + \sum_{\partial A \in \Gamma_{xy}} \thresh{d_{A}}_{{M_0}} + \sum_{\partial A\notin\Gamma_{xy}}\thresh{d_A}_{{M_0}}\right),
\end{align*}
where we have suppressed the $x$ and $y$ in the notation. The lower bound on $d_\T(x,y)$ is similar.
\end{proof}

\section{Thickness statistics for random walks}
\label{sec:thickness_for_random_walks}

The purpose of this Appendix is to give a detailed proof of Theorem~\ref{thm:fraction} and its consequence Theorem~\ref{thm:percentage}. A version of the later statement, as well as a sketch of the proof, was indicated in \cite{EM}. However, as our application requires more control of the constants than the statement in \cite{EM} provides, we include a precise formulation. 
 Eskin--Mirzakhani in \cite[\S4.1]{EM} define for each $\tau>0$ a $\Mod(S)$--invariant function  $u_\tau$ which descends to a proper function on the quotient moduli space.  The actual definition of $u_\tau$ is somewhat complicated, but $u_\tau(x)$ is 
 (up to multiplicative constants depending on $\tau$) comparable to  $1/(\text{length of the shortest curve on $x$})$.   Sets of the form   $\{x : u_\tau(x)\leq C\}$ are cocompact subsets of $\T$.   There is a constant $M_\tau$ such that if $d_{\T}(x,y)\leq \tau$ then 
\begin{equation}
\label{eq:boundM}
\frac{u_\tau(x)}{u_\tau(y)}\leq M_\tau.
\end{equation}

Recall from \S\ref{sec:random_walks} that $\N$ denotes a fixed net in $\T$ and that, given a parameter $\tau$,  $\P_\tau^x$ denotes the set of all sample paths  $\lambda\colon[0,1,\dotsc]\to \mathcal N$ (of any length) starting at the basepoint $x$ and satisfying $d_\T(\lambda_i,\lambda_{i+1})\leq \tau$ for all $i$. 
If random sample paths $\lambda=(\lambda_0,\lambda_1,\ldots)$ are constructed via the Markov process in which the net point $\lambda_{i+1}$ is selected uniformly at random among all net points in the ball $\Ball{\tau}{\lambda_{i}}$, then this Markov process determines a probability measure $\mu_\tau^x$ on the set $\P_\tau^x$ of all sample paths (i.e., random walks) starting at $x$. 
Recall that $\P_\tau^x(r)$ denotes the set of sample paths of at most $\floor{r/\tau}$ steps, so that the distance from start to end is at most $r$.
For our applications, we will be concerned about the \emph{number} of sample paths satisfying a certain property, rather than the $\mu_\tau^x$--measure of such paths. These quantities are related in the following manner, which was implicit in \cite{EM}.

\begin{lemma}
\label{lem:prob_to_card}
For any $\delta>0$, there exists $\tau_0$ such that for all $\tau > \tau_0$ and all $r$, the cardinality of any subset $A\subset \P_\tau^x(r)$ satisfies
\[\abs{A}\leq \mu_\tau^x(A)\cdot e^{(h+\delta)r}.\]
\end{lemma}
\begin{proof}
Since $A$ is a finite set, $\mu_\tau^x(A)$ is simply the sum of the measures of the individual elements of $A$. By definition of the Markov process, the $\mu_\tau^x$ measure of an element $\lambda\in A$ is the reciprocal of the product of the number of choices  for the first step of $\lambda$ times the number of choices for the second step, and so on. Thus
\[\mu_\tau^x(\lambda) = \left(\abs{\Ball{\tau}{\lambda_0}\cap \N}\cdot \abs{\Ball{\tau}{\lambda_1}\cap \N}\dotsb \abs{\Ball{\tau}{\lambda_{\floor{r/\tau}-1}}\cap \N}\right)^{-1}\] 
Combining Proposition 4.5 and equation (17) of \cite{EM}, it follows that for the given $\delta$ there exists $\tau_0$ such that for all all $\tau \geq \tau_0$ one has $\abs{\Ball{\tau}{y}\cap \N}\leq e^{(h+\delta)\tau}$ for all $y\in \T(S)$. Therefore, for each $\lambda\in A$ we find that
\[\mu_\tau^x(\lambda) \geq \left( e^{(h+\delta)\tau}\right)^{-\floor{r/\tau}} \geq e^{-(h+\delta)r}.\]
Adding these estimates for all $\lambda\in A$ yields the claimed inequality.
\end{proof}

We now define an averaging operator $A_\tau$
which, given a function $f\colon \N\to \R_+$, produces the new function $A_\tau f\colon \N\to \R_+$ defined as
$$A_\tau f(x) \colonequals \int_{\lambda\in \P_\tau^x}f(\lambda_1)\, d\mu_\tau^x(\lambda).$$
Thus $A_\tau f(x)$ is the expected value of $f(\lambda_1)$ among all random walks starting at $x$.

In \cite{EM},  Eskin and Mirzakhani defined the averaging operator slightly differently. For a function $f\colon \T\to\R_+$ they defined 
$$A_\tau f(x) \colonequals \frac{1}{\m(B_{\T}(x,\tau))}\int_{B_{\T}(x,\tau)} f(y)d\m(y)$$
and then established the following estimates regarding their function $u_\tau$.

\begin{proposition}[Theorem 4.1 of \cite{EM}]
\label{EMbounds}
For all sufficiently large $\tau\geq 0$, there are constants $c_\tau$ and $b_\tau$ such that
\begin{itemize}
\item  $(A_\tau u_\tau)(y) \leq c_\tau u_\tau(y) + b_\tau$ for all $y\in \T$, and 
\item  $c_\tau \leq C' e^{-\tau}$  for a universal constant $C'$ depending only on genus.
\end{itemize}
\end{proposition}

The same estimates hold for our averaging operator over the discrete random walks on the net.
This follows from (17) of \cite{EM} 
which says that the number of net points in a ball of radius $\tau$ is comparable to the volume of the ball, and the fact that on a ball of radius $2\ccc$ the value of $u_\tau$ is (up to multiplicative constants depending on $\ccc$) the value at the center. 

We now state a  general proposition about Markov chains proved by Athreya in \cite{Ath}.  Denote a state space by $\euS$ and let $P_s$ be the probability measure on all random walks in $\euS$ starting at $s$. 
For a subset $C\subset \euS$ and a random walk $X=(X_0,X_1,\dotsc)$ starting at $s=X_0$, denote by $\tau_C(X) \colonequals \inf\{n\geq 0 \mid X_n\in C\}$ the first step at which $X$ enters $C$. 
Proposition 3.1 of \cite{Ath} then gives the following in terms of an averaging operator $A$ defined over steps of the random walk, as above.
\begin{proposition}
\label{prop:sojourns}
Suppose there exist constants $0 < c < 1$ and $b\geq 0$ and a function $V\colon \euS\to \R_+$ defined on the state space of a random walk that satisfies $(AV)(s)\leq cV(s)+b$ for all $s\in \euS$. Then for all $l\geq 0$, all $s\notin C_l\colonequals \{y\in \euS \mid V(y)\leq l\}$, and all $n\geq 0$ we have
\[P_s(\tau_{C_l}(X) > n) \leq \frac{V(s)}{l}\left(c+\frac{b}{l}\right)^n.\]
\end{proposition}

Following Athreya's proof of Theorem 2.3 in \cite[\S6]{Ath}, we now use 
Proposition~\ref{EMbounds} and Proposition~\ref{prop:sojourns}  to get an \emph{exponential} bound on the probability that a random walk $\lambda$ in $\T$ spends a large fraction of its time outside of large compact sets.

\begin{theorem}[Fraction in the thick part; c.f. {\cite[Theorem 2.3]{Ath}}]
\label{thm:fraction}
For all $0 < \rho < 1$,  $\epsilon,\tau>0$, and $\gamma$ such that  $c_\tau^\rho < \gamma <1$, there exists $\epsilon'>0$ and a cocompact set $\EuScript K$ containing $\T_\epsilon$ such that for all $x\in \N\cap \EuScript K$ and all $n\ge 1$ we have
\[\mu_\tau^x\left(\left\{\lambda\in \P_\tau^x :\frac{\abs{\{1\leq k \leq n : \lambda_k\notin \T_{\epsilon'}\}}}{n} > \rho\right\}\right) < \gamma^n.\]
\end{theorem}

As an almost immediate consequence we will have the following (c.f. \cite{EM}).

\begin{theorem}
\label{thm:percentage}
Given $0<\theta<1$, there exist $\delta'>0$ such that for all $\epsilon,\epsilon_1 > 0$ and all large $\tau$ there exists  $\epsilon'$ so that for all $x\in \T_\epsilon\cap\N$ and each $\tau \leq t \leq r$, the estimate
\[\frac{1}{\lfloor t/\tau \rfloor}\left|\bigl\{1\leq i\leq \lfloor t/\tau \rfloor: \lambda_i\in \T_{\epsilon'}\bigr\}\right|\geq \theta\]
holds for all but at most
$$e^{-\delta't }e^{hr(1+2\epsilon_1)}$$
of the sample paths in $\P_\tau^x(r(1+\epsilon_1))$.
\end{theorem}
\begin{proof}
We assume $\tau$ is large enough so that, by Lemma~\ref{lem:prob_to_card}, the cardinality of any subset $A\subset \P_\tau^x(r(1+\epsilon_1))$ satisfies $\abs{A}\leq \mu_\tau^x(A)e^{hr(1+2\epsilon_1)}$. Let $\rho=1-\theta$ and set $\delta'=\rho/4$. For large $\tau$, we have $c_\tau<e^{-\tau/2}$, and we choose $\gamma$ so that  $c_\tau^\rho<\gamma<c_\tau^{\rho/2}<1$.
By Theorem~\ref{thm:fraction} there is a corresponding $\epsilon'$ for these $\epsilon,\tau,\rho,\gamma$ such that if $x\in \T_\epsilon$, then the $\mu_\tau^x$--measure of the set of paths in $\P_\tau^x$ that in their first $n=\lfloor t/\tau\rfloor$ steps spend more than $\rho$ proportion of time outside $T_{\epsilon'}$ is at most $\gamma^n<c_\tau^{n\rho/2}<e^{-n\rho\tau/4}$. 
By Lemma~\ref{lem:prob_to_card} the number of paths that spend more than $\rho$ proportion of time outside the $\T_{\epsilon'}$ (equivalently spending at most $\theta$ proportion inside $\T_{\epsilon'}$) is thus bounded by
\[\gamma^n e^{hr(1+2\epsilon_1)} < e^{-\rho t/4}e^{hr(1+2\epsilon_1)}=e^{-\delta' t}e^{hr(1+2\epsilon_1)}.\qedhere\] 
\end{proof}

We end with the proof of Theorem~\ref{thm:fraction}.

\begin{proof}
 For the given $\gamma$, we have $c_\tau < \gamma^{1/\rho} < 1$. Therefore we may choose $l >0$ sufficiently large so that $\gamma_0 =(c_\tau+\tfrac{b}{l}) < \gamma^{1/\rho}$. We also take $l$ large enough to satisfy $\T_\epsilon \subset C_l = \{y\in \T : u_\tau(y)\leq l\}$ for the given $\epsilon$. Define $\EuScript K\colonequals C_l$, which is cocompact, and fix any point $x\in \N\cap \EuScript K$. Note that $u_\tau(x) \le l$.

For integers $i\geq 0$ define functions $t_i\colon \P^x_\tau \to \mathbb{N}$ as follows:
\begin{itemize}
\item $t_0 = \inf\{k\geq 0  \mid \lambda_k\in C_l\} = 0$,
\item $t_{2i-1} = \inf\{k > t_{2i-2} \mid \lambda_k\notin C_l\}$ for $i \geq 1$, and
\item $t_{2i} = \inf\{k > t_{2i-1} \mid \lambda_k\in C_l\}$ for $i\geq 1$.
\end{itemize}
That is, evaluated on sample path $\lambda\in \P_\tau^x$, $t_0$ is the first time that $\lambda$ is in $C_l$ (which is always the first step since we assume $\lambda_0=x\in C_l$), $t_1$ is the first time that $\lambda$ steps outside of $C_l$, $t_2$ is the next time that $\lambda$ steps into $C_l$, $t_3$ is the next time that $\lambda$ steps outside of $C_l$, and so on. Now define $s_i = t_i - t_{i-1}$ for $i\geq 1$. So:
\begin{itemize}
\item $s_{2i}$ is the number of steps that $\lambda\in \P_\tau^x$ takes \emph{outside} of $C_l$ on its $i^{\text{th}}$ sojourn outside, and
\item $s_{2i-1}$ is the number of steps that $\lambda\in \P_\tau^x$ takes \emph{inside} of $C_l$ on its $i^{\text{th}}$ sojourn inside.
\end{itemize}

Now, for each $\lambda\in \P_\tau^x$, we define a function $F_\lambda\colon\mathbb{N}\to \{0,1\}$ as an indicator for $C_l$, as follows:
\[F_\lambda(k) = \begin{cases}
1, & t_{2i-1}\leq k < t_{2i}\text{ for some $i$};\\
0, & t_{2i} \leq k < t_{2i+1}\text{ for some $i$.}\end{cases}
\]
Then
\[\frac{1}{n}\bigl| \{1 \leq k \leq n : u_\tau(\lambda_k)> l\}\bigr| = \frac{1}{n}\sum_{k=1}^n F_\lambda(k).\]

By definition, we have $\abs{t_{2k} - t_{2k-2}}\geq 2$ for all $k$, which in turn implies that $k \leq t_{2k}$ for all $k\geq 1$. It now follows that
\[\sum_{k=1}^n F_\lambda(k) \leq \sum_{k=1}^{t_{2n}}F_\lambda(k) = \sum_{k=1}^{n}s_{2k},\]
We therefore conclude that
\[\mu_\tau^x\left(\frac{1}{n}\bigl| \{1 \leq k \leq n : u_\tau(\lambda_k)> l\}\bigr| > \rho\right) = \mu_{\tau}^x\left(\sum_{k=1}^n F_\lambda(k) > n\rho\right) \leq \mu_\tau^x\left(\sum_{i=1}^n s_{2i} > n\rho\right).\]

Now, for each $k$, Proposition \ref{prop:sojourns} applied to the function 
$u_\tau$ implies that
\begin{equation}\label{eqn:sojournBound}
\mu_\tau^x(s_{2i} > k) \leq M_\tau\gamma_0^k.
\end{equation}

We now employ a trick to  \emph{exclude short sojourns}: Let $C'\geq 1$ denote some large threshold that will be determined later. For each $i\geq 1$ we now define
\[s_{2i}'=\begin{cases} 0, & s_{2i}\leq C'\\ s_{2i}, & \text{else}\end{cases}\]
 By (\ref{eqn:sojournBound}) above, we have $\mu_\tau^x(s_{2i} = k) \leq \mu_\tau^x(s_{2i} > k-1) \leq M_\tau\gamma_0^{k-1}$ for each $i\geq 1$ and $k\geq 1$. This implies that for all $i\geq 1$ we have
\[\mu_\tau^x(s_{2i}' = k) \leq \begin{cases} 0, & 1\leq k \leq C'\\ 
M_\tau\gamma_0^{k-1}, & k > C'.\end{cases}\]
The point here is that if $s_{2i} \leq C'$ then $\lambda$ takes at most $C'$ steps outside of $C_l$ between $t_{2i-1}$ and $t_{2i}$. By (\ref{eq:boundM}), this implies that $u_\tau(\lambda_k)\leq lM_\tau^{C'}$ for each of these steps (i.e., for $t_{2i-1}\leq k < t_{2i}$). Therefore, by increasing $l$ to $l'\colonequals lM_\tau^{C'}$, the number of steps that $\lambda$ takes outside of $C_{l'}$ (in its first $n$ steps) is bounded above by $\sum_{i=1}^n s'_{2i}$. Therefore, it suffices to find an exponentially small upper bound on $\mu_\tau^x(\sum_{i=1}^n s'_{2i} > \rho n)$.

Our choice of $\gamma_0 < \gamma^{1/\rho}$ ensures that we may choose  $\theta >0$ satisfying the inequalities $\ln(\gamma_0) < -\theta < \ln(\gamma)/\rho$. In particular, we have $e^{\theta}\gamma_0 < 1$. 
Let $g: \P_\tau^x \to \R$ be given by $g=g_\theta=\exp( \theta\cdot\sum\limits_{i=1}^ns'_{2i})$.
Observe that
$$E(g) = \int_{\P_\tau^x}g(\lambda) d\mu_{\tau}^x(\lambda)
\geq \int_{\{g> e^{\theta\rho n}\}} g(\lambda) d\mu_\tau^x(\lambda)
\geq \mu_\tau^x(g> e^{\theta\rho n})\cdot e^{\theta\rho n}.$$
Therefore
\begin{align*}
\mu_\tau^x\left(\sum_{i=1}^n s'_{2i} > \rho n\right)
&= \mu_\tau^x(g> e^{\theta\rho n})
\leq e^{-\theta\rho n}E(g)\\
&= e^{-\theta \rho n}\int_{\P_\tau^x} \left(\prod_{i=1}^n   e^{\theta s'_{2i}(\lambda)}    \right) d\mu_\tau^x(\lambda)
= e^{-\theta\rho n}\prod_{i=1}^n\int_{\P_\tau^x} e^{\theta s'_{2i}(\lambda)} d\mu_\tau^x(\lambda),
\end{align*}
where in the last line we have used the fact that the random variables $s'_1,\dotsc,s'_{2n}$ are independent of each other (so that the expected value of the product is equal to the product of the expected values). We now estimate each expected value in the product. For each $i\geq 1$ we have
\begin{align*}
\int_{\P_\tau^x}e^{\theta s'_{2i}(\lambda)} d\mu_\tau^x(\lambda)
&= \sum_{k=0}^\infty e^{\theta k} \mu_\tau^x(s'_{2i}=k)
= e^{\theta\cdot 0}\mu_\tau^x(s'_{2i}=0) + \sum_{k=\ceil{C'}}^\infty e^{\theta k}\mu_\tau^x(s'_{2i}=k)\\
&\leq 1 + \sum_{k=\ceil{C'}}^\infty e^{\theta k}M_\tau\gamma_0^{k-1}
\leq 1 + \tfrac{M_\tau}{\gamma_0(1-e^{\theta}\gamma_0)}(e^{\theta}\gamma_0)^{C'}.
\end{align*}

Note that the assumptions on $\theta, \gamma,\rho$ imply $e^{\theta\rho}\gamma > 1$. Therefore, since $e^{\theta}\gamma_0 < 1$, we may now choose $C'$ sufficiently large so that
\[\Gamma\colonequals \left(1 + \tfrac{M_\tau}{\gamma_0(1-e^{\theta}\gamma_0)}(e^{\theta}\gamma_0)^{C'}\right) < e^{\theta \rho}\gamma.\]
By the above calculations, we may now conclude that
\begin{align*}
\mu_\tau^x\left(\sum_{i=1}^n s_{2i} > \rho n\right)
&\leq e^{-\theta\rho n}\prod_{i=1}^n\int_{\P_\tau^x} e^{\theta s'_{2i}(\lambda)} d\mu_\tau^x(\lambda)\\
&\leq e^{-\theta\rho n}\left(1 + \tfrac{M}{\gamma_0(1-e^{\theta}\gamma_0)}(e^{\theta}\gamma_0)^{C'}\right)^{n}=\left(e^{-\theta \rho}\Gamma\right)^n 
< \gamma^n.
\end{align*}
This shows that for any $x$ satisfying $u_\tau(x) \leq l$, the probability that a random walk $\lambda\in \P_\tau^x$ spends more than proportion $\rho$  of its time outside of the cocompact set 
$C_{l'} = \{y\in \T : u_\tau(y)\leq lM_\tau^{C'}\}$ during its first $n$ steps is at most $\gamma^n$. Taking $\epsilon'>0$ sufficiently small so that $u_\tau(y)\leq lM_{\tau}^{C'}\implies y\in \T_{\epsilon'}$ completes the proof.
\end{proof}

\end{document}